\newcommand{\myauthor}{Elden Elmanto and H\aa kon Kolderup}
\newcommand{\mytitle}{On Modules over Motivic Ring Spectra}
\title{On Modules over Motivic Ring Spectra}
\author{\myauthor}
\date{}
\newcommand{\rig}{\mathsf{rig}}
\definecolor{todo}{rgb}{1,0,0}
\definecolor{conditional}{rgb}{0,1,0}
\definecolor{e-mail}{rgb}{0,.40,.80}
\definecolor{reference}{rgb}{.20,.60,.22}
\definecolor{mrnumber}{rgb}{.80,.40,0}
\definecolor{citation}{rgb}{0,.40,.80}
\let\oldmarginpar\marginpar
\renewcommand\marginpar[1]{\-\oldmarginpar[\raggedleft\footnotesize #1]%
{\raggedright\footnotesize #1}}
\newcommand{\Ascr}{\mathscr{A}}
\newcommand{\Oscr}{\mathscr{O}}
\newcommand{\C}{\mathrm{C}}
\newcommand{\D}{\mathsf{D}}
\newcommand{\F}{\mathrm{F}}
\renewcommand{\H}{\mathrm{H}}
\newcommand{\SH}{\mathrm{SH}}
\newcommand{\MW}{\mathrm{MW}}
\newcommand{\Shv}{\mathrm{Shv}}
\newcommand{\LL}{\mathrm{L}}
\newcommand{\RMod}{\mathrm{LMod}}
\newcommand{\LMod}{\mathrm{LMod}}
\renewcommand{\AA}{\mathbf{A}}
\newcommand{\GG}{\mathbf{G}}
\newcommand{\ZZ}{\mathbf{Z}}
\renewcommand{\P}{\mathrm{P}}
\newcommand{\enh}{\mathsf{enh}}
\newcommand{\op}{\mathrm{op}}
\DeclareMathOperator{\id}{id}
\newcommand{\Free}{\mathsf{Free}}
\newcommand{\Nis}{\mathrm{Nis}}
\newcommand{\Spc}{\mathsf{Spc}}
\newcommand{\CMon}{\mathrm{CMon}}
\newcommand{\Sm}{\mathsf{Sm}}
\newcommand{\eff}{\mathrm{eff}}
\DeclareMathOperator{\Hom}{Hom}
\DeclareMathOperator{\Maps}{Maps}
\newcommand{\Mod}{\mathrm{Mod}}
\renewcommand{\Pr}{\mathrm{Pr}}
\newcommand{\G}{\mathrm{G}}
\newcommand{\Cat}{\mathrm{Cat}}
\newcommand{\stab}{\mathsf{stab}}
\newcommand{\MGL}{\mathsf{MGL}}
\newcommand{\E}{\mathsf{E}}
\DeclareMathOperator*{\colim}{colim}
\newcommand{\mot}{\mathrm{mot}}
\DeclareMathOperator{\DM}{DM}
\DeclareMathOperator{\Sch}{Sch}
\newcommand{\Cor}{\mathsf{Corr}}
\newcommand{\sspt}{{\mathbf 1}}
\newcommand{\s}{{\mathbf S}}
\newcommand{\scrS}{\mathscr{S}}
\newcommand{\scrM}{\mathscr{M}}
\newcommand{\scrP}{\mathscr{P}}
\newcommand{\h}{\sf{h}\kern-0.13pc}
\numberwithin{equation}{section}
\newcommand{\Reg}{\mathsf{Reg}}
\newcommand{\Loc}{\mathsf{Loc}}
\theoremstyle{plain}
\newtheorem{theorem}{Theorem}[section]
\newtheorem*{theorem*}{Theorem}
\newtheorem{lemma}[theorem]{Lemma}
\newtheorem{proposition}[theorem]{Proposition}
\newtheorem{corollary}[theorem]{Corollary}
\newtheoremstyle{named}{}{}{\itshape}{}{\bfseries}{.}{.5em}{#1 \thmnote{#3}}
\theoremstyle{named}
\theoremstyle{definition}
\newtheorem{definition}[theorem]{Definition}
\newtheorem{example}[theorem]{Example}
\theoremstyle{remark}
\newtheorem{remark}[theorem]{Remark}
\subjclass[2010]{14F20, 14F42, 19E15, 55P42, 55P43}
\keywords{Motivic homotopy theory, motivic cohomology, Barr--Beck--Lurie, $\infty$-categories.}
\begin{document}

\begin{abstract}
We provide an axiomatic framework that characterizes the stable $\infty$-categories that are module categories over a motivic spectrum. This is done by invoking Lurie's $\infty$-categorical version of the Barr--Beck theorem. As an application, this gives an alternative approach to R\"ondigs and \O stv\ae r's theorem relating Voevodsky's motives with modules over motivic cohomology, and to Garkusha's extension of R\"ondigs and \O stv\ae r's result to general correspondence categories, including the category of Milnor--Witt correspondences in the sense of Calm\`es and Fasel. We also extend these comparison results to regular Noetherian schemes over a field (after inverting the residue characteristic), following the methods of Cisinski and D\'eglise.
\end{abstract}
\maketitle

\section{Introduction}

In \cite{ropreprint} and \cite{ro}, R\"ondigs and \O stv\ae r employed the technology of motivic functors developed in \cite{dro} to show an important structural result regarding motivic cohomology, namely that there is an equivalence of model categories between motives and modules over motivic cohomology, at least over fields of characteristic zero. In particular, this implies that Voevodsky's triangulated categories of motives, introduced in \cite{voevodsky-trimot}, is equivalent to the homotopy category of modules over the motivic Eilenberg--Maclane spectra. This result has been extended to bases which are regular schemes over a field in the work of Cisinski--D\'eglise on integral mixed motives in the equicharacteristic case \cite{equichar}. 

These theorems provide pleasant reinterpretations of Voevodsky's category of motives as modules over a highly structured ring spectrum. The analog in topology is the result that chain complexes over a ring $R$ is equivalent (in an appropriate model categorical sense) to modules over the Eilenberg--Mac Lane spectrum $\H R$. This result was first obtained by Schwede and Shipley in \cite{schwede-shipley} as part of the characterization of stable model categories in \emph{loc. cit.}\footnote{We remark that an $\infty$-categorical treatement of the Schwede--Shipley results can be found in \cite{higheralgebra}*{Theorem 7.1.2.1}.}. More recently, R\"ondigs--\O stv\ae r's result was extended to general categories of correspondences by Garkusha in \cite{garkusha}. 

In the present paper, we aim to provide a robust and conceptually simpler approach to the above results. More precisely, by making use of Lurie's $\infty$-categorical version of the Barr--Beck theorem, we derive a characterization of those stable $\infty$-categories that are equivalent to a module category over a motivic spectrum. These categories are examples of  \emph{motivic module categories} in the sense of Definition~\ref{def:mmc}. Examples include $\DM(k)$ in the sense of Voevodsky \cite{mvw} or $\widetilde{\DM}(k)$ in the sense D\'eglise-Fasel \cite{deglise-fasel}. Our characterization then reads as follows:

\begin{theorem}[See Theorem~\ref{thm:fieldcase}] \label{thm:fieldcase-intro}
Let $k$ be a field. If $\scrM(k)$ is a motivic module category on $k$, then there is an equivalence of presentably symmetric monoidal stable $\infty$-categories
\[
\scrM(k)\left[\tfrac1e\right]\simeq\Mod_{R_{\scrM}\left[\tfrac1e\right]}(\SH(k)).
\]
Here, $R_{\scrM}$ is a motivic $\mathcal{E}_{\infty}$-ring in $\SH(k)$ corresponding to the monoidal unit in $\scrM(k)$. In particular, the associated triangulated categories are equivalent.
\end{theorem}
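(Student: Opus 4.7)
The plan is to apply Lurie's $\infty$-categorical Barr--Beck theorem to the symmetric monoidal colimit-preserving adjunction $F : \SH(k) \rightleftarrows \scrM(k) : U$ built into the definition of a motivic module category. Setting $R_{\scrM} := U(\sspt_{\scrM(k)}) = UF(\sspt)$ makes the image of the monoidal unit an $\mathcal{E}_\infty$-ring in $\SH(k)$, and one obtains a canonical lax symmetric monoidal comparison $\scrM(k) \to \Mod_{R_{\scrM}}(\SH(k))$ factoring $U$. After inverting $e$, this is to be the claimed equivalence.

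I would verify the Barr--Beck--Lurie hypotheses in two parts. For colimit preservation of $U$: both categories are presentable and stable, $F$ preserves colimits as a left adjoint, so $U$ admits a further right adjoint by the adjoint functor theorem and hence preserves all small colimits, which is more than enough for the $U$-split geometric realizations actually required. For the symmetric monoidal enhancement: because $F$ is symmetric monoidal, the monad $UF$ inherits an $\mathcal{E}_\infty$-structure, and the equivalence produced by Barr--Beck--Lurie upgrades to one of presentably symmetric monoidal stable $\infty$-categories by standard principles.

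The substantive step, and the main obstacle, is establishing conservativity of $U[\tfrac1e]$; this is the one place where the residue characteristic must be inverted and where genuinely motivic input enters. The strategy is to exhibit a set of dualizable generators of $\SH(k)[\tfrac1e]$ coming from smooth projective $k$-schemes: integrally this would require Hironaka-style resolution of singularities, but after inverting $e$ one may instead use Gabber's refinements of de Jong's alterations together with $\ell$dh-descent, following Cisinski--D\'eglise. Since $F$ is symmetric monoidal it preserves dualizable objects, so $\{F(\Sigma^{\infty}_{\PP^1,+}X)\}_{X \in \SmProj_k}$ generates $\scrM(k)[\tfrac1e]$. Conservativity then follows from the adjunction: if $U(M) \simeq 0$ in $\SH(k)[\tfrac1e]$, then
\[
\Maps(F(\Sigma^{\infty}_{\PP^1,+}X), M) \simeq \Maps(\Sigma^{\infty}_{\PP^1,+}X, U(M)) \simeq 0
\]
for every smooth projective $X$, which forces $M \simeq 0$ and finishes the proof.
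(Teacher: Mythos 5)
There is a genuine gap, and it concerns the central step of the paper's argument. Applying Barr--Beck--Lurie to the adjunction $\gamma^*\colon \SH(k)\rightleftarrows\scrM(k)\colon\gamma_*$ only identifies $\scrM(k)$ with $\LMod_{\gamma_*\gamma^*}(\SH(k))$, i.e.\ with modules over the \emph{monad} $\gamma_*\gamma^*$. To obtain $\Mod_{R_{\scrM}}(\SH(k))$ with $R_{\scrM}=\gamma_*\gamma^*(\sspt)$ one must further show that the canonical map of monads $c\colon \gamma_*\gamma^*(\sspt)\otimes(-)\to\gamma_*\gamma^*$ is an equivalence (the ``projection formula''); without this, the lax symmetric monoidal comparison $\scrM(k)\to\Mod_{R_{\scrM}}(\SH(k))$ you write down need not be an equivalence (it fails, for instance, for non-smashing localizations). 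Your proposal never addresses this identification, yet it is precisely here that inverting $e$ and the genuinely motivic input enter in the paper: $c$ is an equivalence on strongly dualizable objects by a duality computation; both monads preserve sifted colimits because $\gamma_*$ does; and $\SH(k)\left[\tfrac1e\right]$ is generated under sifted colimits by strongly dualizable objects (compact-rigid generation, via Levine--Yang--Zhao/Gabber-type alterations, or the perfection result). You do invoke this last generation statement, but you deploy it for the wrong purpose.

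Indeed, the step you call ``the substantive obstacle''---conservativity of $\gamma_*$---is not something to prove at all: conservativity and preservation of sifted colimits by $\gamma_*$ are axioms in the definition of a motivic module category, and they are exactly what makes the monadicity step (your first paragraph) immediate. Moreover, the arguments you offer for the parts you do try to establish are flawed. The claim that $\gamma_*$ preserves all colimits because ``$F$ preserves colimits as a left adjoint, so $U$ admits a further right adjoint by the adjoint functor theorem'' is a non sequitur: colimit preservation by $U$ is the hypothesis needed to apply the adjoint functor theorem to $U$, not a consequence of $F$ being a left adjoint. And the claim that $\{\gamma^*(\Sigma^{\infty}_{\PP^1,+}X)\}_{X\in\SmProj_k}$ generates $\scrM(k)\left[\tfrac1e\right]$ because $\gamma^*$ preserves dualizables is circular: generation of the target by images of generators is essentially equivalent to conservativity of the right adjoint, which is what you set out to prove. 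The correct division of labor is: take conservativity and sifted-colimit preservation as hypotheses, and spend the dualizable-generation input on upgrading the monad $\gamma_*\gamma^*$ to $R_{\scrM}\otimes(-)$ after inverting $e$.
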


In fact, we formulate a parametrized version of motivic module categories and, under further hypotheses, we show that Theorem~\ref{thm:fieldcase-intro} extends to regular schemes over fields (see Theorem~\ref{thm:reg-mod}). This proof is essentially borrowed from \cite{equichar}.

 The proof method breaks down into three conceptually simple steps:

\begin{enumerate}
\item Prove that a motivic module category $\scrM(k)$ on $k$ 
is equivalent to the category of modules over some monad on $\SH(k)$.
\item Produce a functor from modules over the monad to modules over a corresponding motivic spectrum (Lemma~\ref{lem:barrbeck2}).
\item Determine when this functor is an equivalence. 
\end{enumerate}

We also give a way to engineer many examples of motivic module categories via the notion of \emph{correspondence categories}, on which one can apply the usual constructions of motivic homotopy theory. We hope that, given the proliferation of these maneuvers, streamlining and axiomatizing these constructions can be useful reference for the community.

\subsection{Overview} Here is an outline of this paper:
\begin{itemize}
\item In Section~\ref{sec:prelims} we collect some background material on the Barr--Beck--Lurie theorem, compact rigid generation in motivic homotopy theory and premotivic categories. In Section~\ref{sec:module-cats} we provide an axiomatic framework characterizing the stable $\infty$-categories that are module categories over motivic spectra. 
\item In Section~\ref{sec:module-cats}, we move on to discuss examples of categories satisfying these axioms.
\item The most prominent example are those arising from some sort of correspondences; we make this precise in Section~\ref{sec:corr}. 
\item Finally, in Section~\ref{sec:regk} we prove that the axioms of Section~\ref{sec:module-cats} are satisfied for the correspondence categories of Section~\ref{sec:corr} in various situations.
\end{itemize}

\subsection{Conventions and notation}
We will rely on the language of $\infty$-categories following Lurie's books \cite{htt} and \cite{higheralgebra}. By a \emph{base scheme} we mean a Noetherian scheme $S$ of finite dimension. We denote by $\Sch$ the category of Noetherian schemes, and by $\Sm_S$ the category of smooth schemes of finite type over $S$. We denote by $\mathbb{T}$ the Thom space of the trivial vector bundle of rank $1$ over the base $S$ so that we have the standard motivic equivalences: $\mathbb{T} \simeq \AA^1/\AA^1 \setminus 0 \simeq \mathbb{P}^1.$ We set $\s^{p,q} := (S^1)^{\otimes(p-q)} \otimes \GG_m^{\otimes q}$ and $\Sigma^{p,q} M := \s^{p,q} \otimes M$, suitably interpreted in the category of motivic spaces or spectra. We reserve $\sspt$ for the motivic sphere spectrum in $\SH(k)$ and write $\Sigma^{p,q}\sspt$ for the $(p,q)$-suspension of $\sspt$. If $\tau$ is a topology on $\Sm_S$, we write $\H_{\tau}(S)$ (resp. $\SH_{\tau}(S)$) for the unstable (resp. the $\mathbb{T}$-stable) motivic homotopy $\infty$-category. If $\tau = \Nis$ we may drop the decoration.

\subsection{Acknowledgements} We would like to thank Paul Arne \O stv\ae r for suggesting to us the problem, and Shane Kelly for useful comments and suggestions. We would especially like to thank Tom Bachmann for very useful comments that changed the scope of this paper. Elmanto would like to thank John Francis for teaching him about ``Barr--Beck thinking," Marc Hoyois for suggesting to him this alternative strategy to deriving  \cite{ro} a long time ago, and Maria Yakerson for teaching him about $\MW$-motives. Kolderup would like to thank Jean Fasel and Paul Arne \O stv\ae r for their patience and for always being available for questions.

\section{Preliminaries}\label{sec:prelims}

\subsection{The Barr--Beck--Lurie Theorem}\label{adj1} Let us start out by recalling the Barr--Beck--Lurie theorem characterizing modules over a monad, in the setting of $\infty$-categories. Let $F: \C \rightleftarrows \D: G$ be an adjunction. We use the terminology of \cite{ga-roz1}*{\S 3.7}. Then the endofunctor $GF\colon  \C \rightarrow \C$ is a monad, and the functor $G\colon  \D \rightarrow \C$ factors as: 
\[\D \xrightarrow{\G^{\enh}}\RMod_{G F}(\C) \xrightarrow{u}\C,\]
where $u$ is the forgetful functor. Moreover, the functor $\G^{\enh}\colon  \D \rightarrow \RMod_{GF}(\C)$ admits a left adjoint:
\[\F^{\enh}\colon  \RMod_{G F}(\C) \rightarrow \D.\]

\subsubsection{} \label{adj2} The net result is that the adjunction $F: \C \rightleftarrows \D: G$ factors as

$$\xymatrix{
\C  \ar@/^1.2pc/[rr]^-{F}   \ar@/_1.2pc/[dr]_-{\Free_{GF}}&  & \D \ar[ll]_{G} \ar[dl]_{G^{\enh}}  \\
 & \RMod_{G F}(\C) \ar[ul]_{u} \ar@/_1.2pc/[ur]_-{F^{\enh}}. &
}$$
Here, the functor $\Free_{GF}\colon  \C \rightarrow \RMod_{GF}(C)$ is simply the left adjoint to the functor $u$ appearing in the factorization of $G$ above, and thus deserves to be called the ``free $GF$-module" functor.

\subsubsection{}The Barr--Beck--Lurie theorem provides necessary and sufficient conditions for the functor $G^\enh\colon \D\to\LMod_{GF}(\C)$ to be an equivalence. Before stating the theorem, recall first that a simplicial object $X_{\bullet}\colon \Delta^{\op} \rightarrow \D$ is \emph{split} if it extends to a split augmented object; in other words it extends to a functor $U\colon\Delta^{\op}_{-\infty} \rightarrow \D$. Here $\Delta_{-\infty}$ is the category whose objects are integers $\geq -1$, and where $\Hom_{\Delta_{-\infty}}(n, m)$ consists of nondecreasing maps $n \cup \{-\infty\} \rightarrow m \cup \{-\infty\}.$ Every split augmented simplicial diagram is a colimit diagram so that the map $\colim X_{\bullet} \rightarrow X_{-1}$ is an equivalence. If $G\colon \D \rightarrow \C$ is a functor, we say that a simplicial object $X_{\bullet}$ in $\D$ is \emph{$G$-split} if $G \circ X_{\bullet}$ is split.

\begin{theorem} [Barr--Beck--Lurie \cite{higheralgebra}*{Theorem 4.7.3.5}] \label{thm:barrbeck1} Let $G\colon  \D \rightarrow \C$ be a functor of $\infty$-categories admitting a left adjoint $F\colon  \C \rightarrow \D$. Then the following are equivalent:

\begin{enumerate}
\item The functor $\G^{\enh}$ and $\F^{\enh}$ are mutually inverse equivalences.
\item The functor $\G^{\enh}$ is conservative, and for any simplicial object $X_{\bullet}\colon\Delta^\op\rightarrow \D$ which is $G$-split, $X_{\bullet}$ admits a colimit in $\D$. Furthermore, any extension $\overline{X_{\bullet}}\colon  (\Delta^{\op})^{\vartriangleright} \rightarrow \D$ is a colimit diagram if and only if $G \circ \overline{X_{\bullet}}$ is.
\end{enumerate}
\end{theorem}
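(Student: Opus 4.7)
Since the statement is the classical Barr--Beck--Lurie theorem \cite{higheralgebra}*{Theorem 4.7.3.5}, the plan is to reconstruct Lurie's strategy, which treats the two implications asymmetrically. The direction $(1)\Rightarrow(2)$ is formal: if $\G^{\enh}$ is an equivalence then conservativity is automatic, and colimits of $G$-split simplicial objects in $\D$ correspond under the equivalence to colimits in $\RMod_{GF}(\C)$ of simplicial diagrams that become split after applying the forgetful functor $u\colon\RMod_{GF}(\C)\to\C$. Because split simplicial objects have absolute colimits, $u$ creates such colimits; transporting through the equivalence gives the required statement.

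For $(2)\Rightarrow(1)$ I would proceed in two stages. First, I would verify by hand that $(\F^{\enh},\G^{\enh})$ restricts to an equivalence between the essential image of $F$ in $\D$ and the full subcategory of free modules in $\RMod_{GF}(\C)$. The adjoint factorizations $F\simeq\F^{\enh}\circ\Free_{GF}$ and $G\simeq u\circ\G^{\enh}$, together with a direct check identifying $\G^{\enh}(F(c))$ with $\Free_{GF}(c)=GF(c)$ as a $GF$-module, imply that the unit and counit of $(\F^{\enh},\G^{\enh})$ are equivalences on free objects.

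I would then extend from free modules to all modules via the \emph{bar construction}. For any $D\in\D$, form the augmented simplicial object $B_\bullet(D)\to D$ with $B_n(D)=(FG)^{n+1}(D)$ and face and degeneracy maps built from the unit and counit of $F\dashv G$; its image under $G$ is split using the counit $FG\to\id$, so $B_\bullet(D)$ is $G$-split, and hypothesis~(2) then gives $D\simeq \colim B_\bullet(D)$. An analogous bar resolution $B_\bullet^T(M)\to M$ in $\RMod_{GF}(\C)$ presents any module $M$ as a colimit of free modules that is split after applying $u$. Now $\F^{\enh}$ preserves all colimits as a left adjoint, $\G^{\enh}$ preserves colimits of $G$-split simplicial objects by~(2), and both carry bar resolutions to bar resolutions; combining with the previous stage, the unit $M\to \G^{\enh}\F^{\enh}(M)$ and counit $\F^{\enh}\G^{\enh}(D)\to D$ become equivalences after applying the relevant forgetful functors, and conservativity of $\G^{\enh}$ upgrades these to genuine equivalences.

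The main obstacle is not conceptual but $\infty$-categorical bookkeeping: the bar construction must be promoted to an honest functor with coherent splittings, which in practice is handled via the two-sided bar construction $B(T,T,-)$ for the monad $T=GF$. Since this is precisely the content of \cite{higheralgebra}*{\S 4.7.3}, I would assemble the ingredients above and cite \emph{loc.\ cit.} for the coherence issues.
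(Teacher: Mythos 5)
Your proposal is correct and takes essentially the same route as the paper: the paper gives no independent proof of this statement, quoting it directly from Lurie's Higher Algebra, and your sketch is a faithful outline of Lurie's own argument (equivalence on free modules, then extension along bar resolutions of $G$-split simplicial objects, with conservativity closing the loop), deferring the coherence bookkeeping to \cite{higheralgebra}*{\S 4.7.3} exactly as the paper does. Nothing further is needed.
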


Any adjunction $(F,G)$ satisfying the equivalent conditions above is called \emph{monadic}.

\subsection{Compact and rigid objects in motivic homotopy theory}\label{sect:cpct-gen} 
We now recall some facts about compact-rigid generation in motivic stable $\infty$-categories.

\subsubsection{} For now we work over an arbitrary base $S$. Denote by:
\begin{enumerate}
\item $\SH^{\omega}(S)$ the full subcategory of $\SH(S)$ spanned by the compact objects, and
\item $\SH^{\rig}(S)$ the full subcategory of $\SH(S)$ spanned by the strongly dualizable objects.
\end{enumerate}
Then, as proved in \cite{riou-sw}, there is an inclusion
\begin{equation} \label{riou-inclusions}
\SH^{\rig}(S) 
\subseteq 
\SH^{\omega}(S).
\end{equation}
Indeed, the argument in \emph{loc cit.} only relies on the computation of the Spanier--Whitehead dual of the suspension spectrum of a smooth projective $S$-scheme as the Thom spectrum of its stable normal bundle; this has been established generally by Ayoub in \cite{ayoub}.

\subsubsection{} The $\infty$-category $\SH(S)$ is generated under sifted colimits by $\Sigma^{q}_{\mathbb{T}} \Sigma^{\infty}_{\mathbb{T}}X_+$ where $X$ is an affine smooth scheme over $S$ and $q \in \ZZ$ \cite{adeel}*{Proposition 4.2.4}. Furthermore, each generator is a compact object in $\SH(S)$ since Nisnevich sheafification preserves filtered colimits (see, for example, \cite{hoyois-sixops}*{Proposition 6.4} where we set the group of equivariance to be trivial). Hence the $\infty$-category $\SH^{\omega}(S)$ is generated under finite colimits and retracts by $\Sigma_{\mathbb{T}}^{q}\Sigma^{\infty}_{\mathbb{T}}X_+, q \in \ZZ$, where $X$ is affine. From this discussion it is immediate that:

\begin{lemma} \label{lem:collapse} Let $S$ be a base scheme and let $L\colon  \SH(S) \rightarrow \SH(S)$ be a localization endofunctor. The following are equivalent:
\begin{enumerate}
\item For any smooth affine $S$-scheme $X$, the suspension spectrum $L(\Sigma^{\infty}_{\mathbb{T}}X_+)$ is a retract of some $L(\Sigma^{\infty}_{\mathbb{T}}Y_+)$ where $Y$ is a smooth projective $S$-scheme.
\item The inclusion~\eqref{riou-inclusions} collapse an equality
$
L(\SH^{\rig}(S))
=
L(\SH^{\omega}(S)).
$
 \end{enumerate}
\end{lemma}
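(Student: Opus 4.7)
The plan rests on the structural facts recalled just before the lemma: $\SH^{\omega}(S)$ is the thick subcategory of $\SH(S)$ generated by the $\mathbb{T}$-shifts $\Sigma^{q}_{\mathbb{T}}\Sigma^{\infty}_{\mathbb{T}} W_+$ with $W$ smooth affine and $q\in\ZZ$, while by the Riou--Ayoub Spanier--Whitehead duality the corresponding shifts $\Sigma^{q}_{\mathbb{T}}\Sigma^{\infty}_{\mathbb{T}} Y_+$ with $Y$ smooth projective land in $\SH^{\rig}(S)$. Being a localization, $L$ is a left adjoint, hence preserves finite colimits; retracts are preserved by any functor. Consequently, the essential image under $L$ of a thick subcategory of $\SH(S)$ is contained in the thick subcategory of $L(\SH(S))$ generated by the $L$-images of the original generators.

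For the implication (1) $\Rightarrow$ (2), the inclusion $L(\SH^{\rig}(S))\subseteq L(\SH^{\omega}(S))$ is immediate from \eqref{riou-inclusions}. For the reverse containment, take $X\in\SH^{\omega}(S)$; then $L(X)$ lies in the thick subcategory of $L(\SH(S))$ generated by $\{L(\Sigma^{q}_{\mathbb{T}}\Sigma^{\infty}_{\mathbb{T}} W_+)\}$. Hypothesis (1) exhibits each such generator as a retract of $L(\Sigma^{q}_{\mathbb{T}}\Sigma^{\infty}_{\mathbb{T}} Y_+)$ for a suitable smooth projective $Y$, and these latter objects lie in $L(\SH^{\rig}(S))$. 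Hence $L(X)\in L(\SH^{\rig}(S))$, yielding the converse inclusion.

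For (2) $\Rightarrow$ (1), applying (2) to a smooth affine $X$ gives $L(\Sigma^{\infty}_{\mathbb{T}} X_+)\simeq L(Z)$ for some strongly dualizable $Z$, so $L(\Sigma^{\infty}_{\mathbb{T}} X_+)$ sits in the thick subcategory of $L(\SH(S))$ generated by the $L$-images of $\mathbb{T}$-shifted smooth projective suspension spectra. The main obstacle is to promote this thick-subcategory membership to a retract of a single $L(\Sigma^{\infty}_{\mathbb{T}} Y_+)$: I would collapse a finite family $\{Y_{\alpha}\}$ into $Y=\bigsqcup_{\alpha} Y_{\alpha}$, which is still smooth projective, to absorb finite direct sums, and use the splitting $\Sigma^{\infty}_{\mathbb{T}}(\PP^{1}\times Y)_+\simeq \Sigma^{\infty}_{\mathbb{T}} Y_+\oplus \Sigma_{\mathbb{T}}\Sigma^{\infty}_{\mathbb{T}} Y_+$ iteratively to absorb $\mathbb{T}$-shifts on the positive side. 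Cofibers of maps between the $L(\Sigma^{\infty}_{\mathbb{T}} Y_+)$ are not a priori retracts of a single such object, so this final step ultimately rests on extra structure of $L$ (e.g.\ symmetric monoidality, keeping retracts of images of strongly dualizable objects stable under duals and tensor products) rather than a purely formal dévissage.
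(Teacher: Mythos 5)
Your argument for (1) $\Rightarrow$ (2) is correct and is precisely the (only sketched) argument the paper has in mind: the paper records that $\SH^{\omega}(S)$ is generated under finite colimits and retracts by the objects $\Sigma^{q}_{\mathbb{T}}\Sigma^{\infty}_{\mathbb{T}}X_+$ with $X$ smooth affine, so that $L(\SH^{\omega}(S))$ is thickly generated by their $L$-images; hypothesis (1), together with invertibility of $\mathbb{T}$ and the (monoidal) compatibility of $L$, puts each such generator inside the thick, retract-closed subcategory of dualizable objects, and the reverse inclusion is \eqref{riou-inclusions}. This is the direction that carries all the weight in the paper: Example~\ref{exmp-field} and Theorem~\ref{thm:fieldcase} only ever pass from the retract statement of \cite{yang-zhao-levine} to compact-rigid generation, never back.

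The genuine gap is that you do not prove (2) $\Rightarrow$ (1), and you say so yourself. Knowing that $L(\Sigma^{\infty}_{\mathbb{T}}X_+)$ is dualizable (equivalently, lies in $L(\SH^{\rig}(S))$) only places it, a priori, in a thick subcategory; condition (1) demands a retract of a \emph{single} $L(\Sigma^{\infty}_{\mathbb{T}}Y_+)$ with $Y$ smooth projective. Your devices do not close this: finite disjoint unions absorb direct sums, and the splitting $\Sigma^{\infty}_{\mathbb{T}}(\PP^1\times Y)_+\simeq \Sigma^{\infty}_{\mathbb{T}}Y_+\oplus \mathbb{T}\otimes\Sigma^{\infty}_{\mathbb{T}}Y_+$ absorbs positive Tate twists, but cofibers are not absorbed, and neither are negative twists (the Spanier--Whitehead dual of $\Sigma^{\infty}_{\mathbb{T}}Y_+$ is a Thom spectrum, which is not integrally a retract of a smooth projective suspension spectrum, since the projective bundle formula fails in $\SH$). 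Moreover, condition (2) makes no reference to projective schemes at all, so deducing (1) would require the additional input that dualizable objects of $L(\SH(S))$ are built from, indeed are retracts of, smooth projective generators. To be fair to you, the paper itself offers no argument here: it declares the whole lemma ``immediate'' from the generation discussion, and the implication it actually uses is exactly the one you proved. But as a proof of the stated equivalence, your write-up is incomplete at (2) $\Rightarrow$ (1), and the obstruction you isolate is real rather than a matter of missing bookkeeping.
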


\begin{example} \label{exmp-field} Let $k$ be a field and suppose that $\ell$ is a prime which is coprime to the exponential characteristic $e$ of $k$. Let $L_{(\ell)}\colon  \SH(k) \rightarrow \SH(k)$ be the localization endofunctor at $\ell$. If $k$ is perfect then, according to  \cite{yang-zhao-levine}*{Corollary B.2}, condition $1$ of Lemma~\ref{lem:collapse} is satisfied so that 
$
\SH^{\rig}(k)_{(\ell)} 
=
\SH^{\omega}(k)_{(\ell)}$. This immediately implies the equality
$
\SH^{\rig}(k) \left[\tfrac{1}{e}\right]
=
\SH^{\omega}(k)\left[\tfrac{1}{e}\right].
$
We note that this equality is extended to the case of arbitrary fields in \cite{perfect}. More precisely, \cite{perfect} shows that the $\infty$-categories $\SH(k_{\mathrm{perf}})\left[\frac{1}{e}\right]$ and $\SH(k)\left[\frac{1}{e}\right]$ are equivalent. 
\end{example}
\subsubsection{} For a general base scheme $S$, it is not known if the inclusions in~\eqref{riou-inclusions} collapse to equalities even after applying localizations at  a prime $\ell$ or inversions. Hence it is useful to record when it does:
\begin{definition} Let $L\colon  \SH(S) \rightarrow \SH(S)$ be a localization endofunctor. We say that $L(\SH(S))$, or simply $L$, has \emph{compact-rigid generation} if any of the equivalent conditions of Lemma~\ref{lem:collapse} are satisfied.
\end{definition}
Hence Example~\ref{exmp-field} tells us that $\SH(k)_{(\ell)}$ and $\SH(k)\left[\frac{1}{e}\right]$ have compact-rigid generation.

\subsection{Premotivic categories and adjunctions.} \label{premot} Lastly, we recall Cisinski and D\'eglise's notion of a \emph{premotivic category} \cite{CDbook}. Suppose that $\scrS$ is a full subcategory of the category $\Sch$ of Noetherian schemes, and let $\scrP$ denote a class of admissible morphisms \cite[\S 1.0]{CDbook}. In fact, the only example we care about is when $\scrP$ is the class of smooth morphisms. As in \cite{CDbook}*{\S 1} (see \cite{etalemotives}*{Appendix A} for a more succint discussion), a functor 
\[
\scrM\colon\scrS^\op\to\Cat_\infty
\] 
is called a \emph{$\scrP$-premotivic category over $\scrS$} if for each morphism $f\colon  T \rightarrow S$ in $\scrS$, the induced functor $f^*\colon  \scrM(S) \rightarrow \scrM(T)$ admits a right adjoint $f_*$, and if $f$ is admissible, it admits a left adjoint $f_\#$. The left adjoints are furthermore required to satisfy the \emph{$\scrP$-base change formula}, i.e., the exchange morphism $Ex_\#^*\colon q_\# g^*\to f^*p_\#$ is an equivalence whenever
\[
\xymatrix{
Y\ar[r]^{q}\ar[d]_{g} & X\ar[d]^{f}\\
S\ar[r]^{p} & T
}\]
is a Cartesian diagram in $\scrS$ such that $p$ is a $\scrP$-morphism. See \cite[1.1.9]{CDbook} for details.

We refer the reader to the thesis of Khan \cite{adeel} for a detailed discussion of this notion in the $\infty$-categorical setting. If the context is clear, we simply refer to $\scrM$ as a premotivic category. We may also speak of premotivic $\infty$-categories taking values in other (large) $\infty$-categories such as $\Cat_{\infty}^{\otimes}$, $\Cat_{\infty,\stab}$ or $\Pr^L$.

\subsubsection{} \label{premot1}We also have the appropriate notion of an adjunction between premotivic categories (see \cite{CDbook}*{Definition 1.4.6}, \cite{etalemotives}*{Definition A.1.7}). Suppose that $\scrM, \scrM'$ are premotivic categories, then a \emph{premotivic adjunction} is a transformation $\gamma^*\colon  \scrM \rightarrow \scrM'$ such that 
\begin{enumerate}
\item for each $S \in \scrS$, the functor $\gamma_S^*\colon  \scrM(S) \rightarrow \scrM'(S)$ admits a right adjoint $\gamma_{S*}$.
\item For each morphism $f\colon T \rightarrow S \in \scrS$, the canonical transformation $f_{\#}\gamma^*_S \rightarrow \gamma^*_Tf_{\#}$ is an equivalence.
\end{enumerate}
Furthermore we say that a premotivic adjunction $\gamma^*$ is a \emph{localization of premotivic categories} (or, simply, a \emph{localization}) if for each $S \in \scrS$ the functor $\gamma_{S*}$ is fully faithful, i.e., a localization in the sense of \cite{htt}*{Definition 5.2.7.2}. Furthermore we say that a localization of premotivic categories is \emph{smashing} if $\gamma_{S*}$ preserves colimits. Suppose further that $\scrM$ takes values in $\Cat^{\otimes}_{\infty}$. In particular, the functors $f^*$ are strongly symmetric monoidal. Then a localization is \emph{symmetric monoidal} if for any $S \in \scrS$ then if $\E \in \scrM(S)$ is $L$-local, then for any $\F \in \scrM(S)$, $\E \otimes \F$ is $L$-local as well. This last condition implies that the symmetric monoidal structure on $\scrM(S)$ descends to one on the subcategory of $L$-local objects and the localization functor is strongly symmetric monoidal \cite{higheralgebra}*{Proposition 2.2.1.9}.

\subsubsection{} We recall two conditions on $\scrM$ which will be relevant to us later. In order to formulate them, we will now assume that $\scrM$ takes values in stable $\infty$-categories. Let $S\in\scrS$ be a scheme. Suppose that $i\colon Z\to S$ is a closed subscheme, and let $j\colon U\to S$ be its open complement.

\begin{definition} \label{def:loci} 
Let $\scrM\colon\scrS^\op\to\Cat_{\infty,\stab}$ be a premotivic category, and let $Z\xrightarrow{i} S\xleftarrow{j}U$ be as above. We say that $\scrM$ satisfies $(\Loc_i)$ if
\[
\scrM(Z)\xrightarrow{i_*}\scrM(S)\xrightarrow{j^*}\scrM(U)
\]
is a cofiber sequence of stable $\infty$-categories.

We say that $\scrM$ satisfies $(\Loc)$ if $(\Loc_i)$ is satisfied for any closed immersion $i$.
\end{definition}
 
Let $c = (c_i)_{i \in I}$ be a collection of Cartesian sections of $\scrM$ (the only case we consider is $\{\Sigma^{p,q}\sspt \}_{p, q \in \ZZ}$). We denote by $\scrM_c(S) \subseteq \scrM(S)$ the smallest thick subcategory of $\scrM(S)$ which contains $f_{\#}f^*c_{i,X}$ for any smooth morphism $f\colon  T \rightarrow S$. Following \cite{equichar}*{Definition 2.3}, we call objects in $\scrM_c(S)$ \emph{c-constructible}. We say that $\scrM$ is \emph{c-generated} if for all $X \in \scrS$ the stable $\infty$-category $\scrM(S)$ is generated by $\scrM_c(S)$ under all small colimits.

\begin{definition} \label{def:cont-loc} Let $\scrM\colon\scrS^\op\to\Cat_{\infty,\stab}$ be a premotivic category. Suppose that $\Ascr \subseteq \scrS^{\Delta^1}$ is a collection of morphisms in $\scrS$. We say that $\scrM$ is \emph{continuous with respect to $\Ascr$} if the following holds. Suppose that $X\colon  I \rightarrow \scrS$ is a cofiltered diagram in $\scrS$ whose transition maps belongs to $\Ascr$ and whose limit $X:=\lim_{i\in I} X_i$ exists in $\scrS$. Then the canonical map
\[
\scrM_c(X) \rightarrow \lim_{i\in I} \scrM_c(X_i).
\]
is an equivalence.
\end{definition}

\section{Motivic module categories}\label{sec:module-cats}
In this section we formulate the notion of \emph{motivic module categories} and relate it to categories of modules over a motivic $\mathcal{E}_\infty$-ring spectrum. 

\subsubsection{}Let $\scrS$ be a full subcategory of $\Sch$. By \cite{ayoub,CDbook} we then have a premotivic category $\SH|_\scrS\colon  \scrS \rightarrow \Pr^{L,\otimes}_{\stab}$ whose value at $S\in\scrS$ is the motivic stable homotopy category $\SH(S)$ over $S$.

\begin{definition} \label{def:mmc}
Let $\scrS$ be as above, and suppose that $L\colon \SH|_\scrS\to L(\SH)|_\scrS$ is a localization which is symmetric monoidal in the sense of \S\ref{premot1}. We then define the following:
\begin{enumerate}
\item Let $S \in \scrS$. An \emph{$L$-local motivic module category on $S$} is a presentably  symmetric monoidal stable $\infty$-category $\scrM(S)$ equipped with an adjunction
\[
\gamma_S^*: L(\SH(S)) \rightleftarrows \scrM(S):\gamma_{S*}
\]
such that the left adjoint $\gamma_S^*$ is symmetric monoidal, and the right adjoint $\gamma_{S*}$ is conservative and preserves sifted colimits.
\item  An \emph{$L$-local motivic module category over $\scrS$} (or, simply, a \emph{motivic module category} if the context is clear) is a premotivic category
\[
\scrM\colon\scrS^\op\to\Pr^{L,\otimes}_{\stab}
\]
valued in presentably symmetric monoidal stable $\infty$-categories, along with a premotivic adjunction
\[\gamma^*\colon L(\SH)|_{\scrS}\to\scrM;\quad S \mapsto (\gamma_S^*\colon L(\SH(S)) \rightarrow \scrM(S)),
\] 
which evaluates to an $L$-local motivic module category $\scrM(S)$ on $S$ for each $S \in \scrS$.
\end{enumerate}
If $L$ is the identity functor, then we simply say that $\scrM$ is a \emph{motivic module category}. When the localization $L$ is clear, we may denote a motivic module category by a pair ($\SH|_\scrS,\scrM)$.
Moreover, if the scheme $S$ is implicitly understood, we may drop the $S$ from the notation $(\gamma_S^*,\gamma_{S*})$.
\end{definition}


In \S\ref{sec:corr} we will give a way to construct motivic module categories using very general inputs. 

\begin{lemma}\label{lemma:eoo}
Let $S\in\scrS$, and let $\sspt_S\in\SH(S)$ denote the motivic sphere spectrum over $S$. If $\scrM$ is an $L$-local motivic module category, then the spectrum $L\gamma_*\gamma^*(\sspt_S)\in\SH(S)$ is an $\mathcal{E}_{\infty}$-ring spectrum.
\end{lemma}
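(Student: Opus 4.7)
The plan is to leverage the fact that right adjoints to symmetric monoidal left adjoints are canonically lax symmetric monoidal, and that lax symmetric monoidal functors preserve $\mathcal{E}_\infty$-algebras. This is the $\infty$-categorical analogue of the classical Eilenberg--Moore observation, formalized in \cite{higheralgebra}*{\S 7.3.2} (in particular the construction of the functor $\CAlg(\gamma_*)\colon \CAlg(\scrM(S))\to\CAlg(L(\SH(S)))$ associated to any lax symmetric monoidal functor).

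First I would record that because $\gamma^*$ is a strong symmetric monoidal functor between presentably symmetric monoidal $\infty$-categories, its right adjoint $\gamma_*$ is canonically lax symmetric monoidal; this is a general fact about adjunctions in the symmetric monoidal setting. In particular, $\gamma_*$ induces a functor on $\mathcal{E}_\infty$-algebra objects $\CAlg(\scrM(S)) \to \CAlg(L(\SH(S)))$.

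Next I would observe that $\gamma^*(\sspt_S)$ is, by strong monoidality of $\gamma^*$, canonically equivalent to the unit $\mathbf{1}_{\scrM(S)}$ of $\scrM(S)$, which is the initial object in $\CAlg(\scrM(S))$ and in particular carries an essentially unique $\mathcal{E}_\infty$-algebra structure. Applying $\gamma_*$ then yields $\gamma_*\gamma^*(\sspt_S) \simeq \gamma_*(\mathbf{1}_{\scrM(S)}) \in \CAlg(L(\SH(S)))$, i.e., an $\mathcal{E}_\infty$-ring spectrum in $L(\SH(S))$.

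Finally, since the localization $L\colon \SH|_\scrS\to L(\SH)|_\scrS$ is symmetric monoidal in the sense of \S\ref{premot1}, the subcategory of $L$-local objects sits inside $\SH(S)$ as a symmetric monoidal subcategory, and an $\mathcal{E}_\infty$-algebra object in $L(\SH(S))$ transfers (via the fully faithful right adjoint of $L$) to an $\mathcal{E}_\infty$-algebra in $\SH(S)$. This interprets $L\gamma_*\gamma^*(\sspt_S)$ as an $\mathcal{E}_\infty$-ring spectrum, as desired. The argument involves no genuine obstacle; the only mild subtlety is bookkeeping the interaction between the symmetric monoidal localization $L$ and the adjunction $(\gamma^*,\gamma_*)$, which is handled precisely by the hypothesis that the localization is symmetric monoidal.
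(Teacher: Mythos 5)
Your proof is correct and follows essentially the same route as the paper: $\gamma_*$ is lax symmetric monoidal (being right adjoint to the symmetric monoidal $\gamma^*$) and hence preserves $\mathcal{E}_\infty$-algebras, $\gamma^*(\sspt_S)$ is the unit of $\scrM(S)$ and thus an $\mathcal{E}_\infty$-algebra, and the compatibility of the symmetric monoidal localization $L$ handles the passage to $\SH(S)$. Nothing further is needed.
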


\begin{proof}
As $\gamma_*$ is lax symmetric monoidal, it follows that $\gamma_*$ preserves $\mathcal{E}_{\infty}$-algebras. Since $\gamma^*$ is symmetric monoidal, $\gamma^*(\sspt_S)$ is the unit object in $\scrM$ and is thus an $\mathcal{E}_{\infty}$-algebra. As $L$ is symmetric monoidal, we conclude that $\gamma_*\gamma^*(\sspt_S)$ is an $\mathcal{E}_{\infty}$-ring spectrum.
\end{proof}

\subsubsection{}The Barr--Beck--Lurie theorem ensures that a motivic module category on $S$ is always equivalent to modules over a monad, as the following lemma records. We will subsequently investigate when we can further enhance this equivalence to modules over the $\mathcal{E}_{\infty}$-ring spectrum $L\gamma_*\gamma^*(\sspt_S)$.

\begin{lemma}\label{lemma:bbl}
If $\scrM(S)$ is a motivic module category on $S$, then the induced adjunction
\[
\gamma^{*,\enh}:\LMod_{\gamma_*\gamma^*}(L(\SH(S))) \rightleftarrows \scrM(S):\gamma_*^{\enh}
\]
is an equivalence of $\infty$-categories.
\end{lemma}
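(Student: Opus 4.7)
This is a direct invocation of the Barr--Beck--Lurie theorem (Theorem~\ref{thm:barrbeck1}) applied to the adjunction
\[
\gamma^*\colon L(\SH(S))\rightleftarrows\scrM(S)\colon\gamma_*.
\]
The machinery of \S\ref{adj1}--\S\ref{adj2} produces, from any such adjunction, the factorization through $\LMod_{\gamma_*\gamma^*}(L(\SH(S)))$ and in particular the enhanced adjoint pair $(\gamma^{*,\enh},\gamma_*^\enh)$ in the statement. Thus it suffices to verify that the pair $(\gamma^*,\gamma_*)$ is monadic, i.e., that condition (2) of Theorem~\ref{thm:barrbeck1} holds for $G=\gamma_*$.

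First I would check conservativity of $\gamma_*^\enh$. By construction, $\gamma_*$ factors as $u\circ\gamma_*^\enh$, where $u\colon\LMod_{\gamma_*\gamma^*}(L(\SH(S)))\to L(\SH(S))$ is the forgetful functor. Since $\gamma_*$ is conservative by hypothesis in Definition~\ref{def:mmc}, and since a factorization of a conservative functor through a further functor forces that factor to be conservative, $\gamma_*^\enh$ is conservative.

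Next I would verify the colimit hypothesis for $\gamma_*$-split simplicial objects $X_\bullet\colon\Delta^\op\to\scrM(S)$. Existence of the colimit is immediate: $\scrM(S)$ is presentable, hence cocomplete. Let $\overline{X}_\bullet\colon(\Delta^\op)^\vartriangleright\to\scrM(S)$ be an extension. If $\overline{X}_\bullet$ is a colimit diagram, then $\gamma_*\circ\overline{X}_\bullet$ is as well, because $\gamma_*$ preserves sifted colimits by hypothesis and simplicial colimits are sifted. Conversely, suppose $\gamma_*\circ\overline{X}_\bullet$ is a colimit diagram. Form the genuine colimit $Y:=\colim_{\Delta^\op}X_\bullet$ in $\scrM(S)$ with canonical comparison map $\varphi\colon Y\to\overline{X}_{-1}$. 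Applying $\gamma_*$ and using that $\gamma_*$ preserves the colimit on the left, we see that $\gamma_*(\varphi)$ is identified with the equivalence between $\colim\gamma_*\circ X_\bullet$ and $\gamma_*(\overline{X}_{-1})$, hence is an equivalence. Conservativity of $\gamma_*$ then forces $\varphi$ to be an equivalence, so $\overline{X}_\bullet$ is a colimit diagram.

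Both clauses of Theorem~\ref{thm:barrbeck1}(2) are therefore satisfied, so $(\gamma^*,\gamma_*)$ is monadic and $(\gamma^{*,\enh},\gamma_*^\enh)$ is an equivalence. There is no genuine obstacle here: the two conditions placed on $\gamma_*$ in Definition~\ref{def:mmc} — conservativity and preservation of sifted colimits — are precisely what is needed to apply Barr--Beck--Lurie, with presentability of $\scrM(S)$ ensuring the required colimits exist.
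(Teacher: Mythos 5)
Your proof is correct and is essentially the paper's argument: the paper simply notes that the hypotheses of Definition~\ref{def:mmc} (conservativity and preservation of sifted colimits for $\gamma_*$, together with presentability of $\scrM(S)$) are exactly the conditions needed to invoke Theorem~\ref{thm:barrbeck1}, which is what you verify in detail.
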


\begin{proof}
By assumption, the conditions of Theorem~\ref{thm:barrbeck1} are satisfied.
\end{proof}

\subsection{Motivic module categories versus categories of modules} 
The following definition will be essential in relating a motivic module category to a category of modules over a motivic $\mathcal{E}_{\infty}$-ring spectrum.

\begin{definition} \label{def:proj-formula}
Let $\scrM$ be an $L$-local motivic module category over $\scrS$ and let $S \in \scrS$. We say that the pair ($\SH|_\scrS,\scrM)$ \emph{admits the projection formula at $S$} if the transformation
\[
\gamma_{*}\gamma^*(\sspt_S)\otimes(-)\to \gamma_{*}\gamma^*
\]
is an equivalence in $L(\SH(S))$. If ($\SH|_\scrS,\scrM)$ admits the projection formula at any $S\in\scrS$, we say that ($\SH|_\scrS,\scrM)$ \emph{admits the projection formula}.
\end{definition}

\begin{theorem}\label{thm:main}
Let $\scrM$ be an $L$-local motivic module category over $\scrS$. Suppose that $S \in \scrS$ is a scheme such that $(\SH|_\scrS,\scrM)$ admits the projection formula at $S$. Then there is an equivalence of presentably symmetric monoidal stable $\infty$-categories
\[
\scrM(S)\simeq\Mod_{L\gamma_*\gamma^*(\sspt_S)}(\SH(S)).
\]
Consequently, if $(\SH|_\scrS,\scrM)$ admits the projection formula, then we have an equivalence of premotivic categories 
\[\scrM\simeq \Mod_{L\gamma_*\gamma^*(\sspt)}(\SH(-)).\]
\end{theorem}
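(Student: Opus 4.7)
The plan is to argue in three local steps --- (i) reduce to modules over a monad, (ii) identify this monad with $R\otimes(-)$ where $R:=L\gamma_*\gamma^*(\sspt_S)$, and (iii) pass from $L(\SH(S))$ to $\SH(S)$ --- and then assemble the fibrewise equivalences into the desired premotivic statement. By Lemma~\ref{lemma:eoo}, $R$ is an $\mathcal{E}_\infty$-algebra, which justifies the right-hand side of the target equivalence.

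For step (i), I would apply Lemma~\ref{lemma:bbl} (i.e., Barr--Beck--Lurie) to identify $\scrM(S)$ with $\LMod_T(L(\SH(S)))$, where $T=\gamma_*\gamma^*$ is the monad on $L(\SH(S))$ attached to the adjunction $(\gamma^*,\gamma_*)$. For step (ii), the projection formula of Definition~\ref{def:proj-formula} directly gives an equivalence of underlying endofunctors $R\otimes(-)\xrightarrow{\simeq}T$. The delicate issue is to upgrade this to an equivalence of $\mathcal{E}_1$-algebras in the $\infty$-category of endofunctors of $L(\SH(S))$. I would do this using the standard yoga of symmetric monoidal monadic adjunctions: since $\gamma^*$ is symmetric monoidal and colimit-preserving, it factors canonically through an $L(\SH(S))$-linear symmetric monoidal functor $\Mod_R(L(\SH(S)))\to\scrM(S)$ whose composition with the forgetful functor $\oblv\colon\Mod_R(L(\SH(S)))\to L(\SH(S))$ agrees with $\gamma^*$. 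Comparing the monads of the two adjunctions through this factorization, combined with the projection formula, yields the desired identification; alternatively one could appeal to a general statement such as \cite{higheralgebra}*{Proposition 4.8.5.8} on monoidal monadic adjunctions.

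Once the monad $T$ is identified with $R\otimes(-)$ as $\mathcal{E}_1$-algebras, the standard equivalence $\LMod_{R\otimes(-)}(L(\SH(S)))\simeq\Mod_R(L(\SH(S)))$ produces $\scrM(S)\simeq\Mod_R(L(\SH(S)))$ as presentably symmetric monoidal stable $\infty$-categories. For step (iii), I would observe that any $R$-module $M\in\SH(S)$ is a retract of $R\otimes M$ via the unit $\sspt_S\to R$; since $R$ is $L$-local and $L$ is a symmetric monoidal localization (\S\ref{premot1}), the tensor $R\otimes M$ is $L$-local, and hence so is $M$. This gives $\Mod_R(\SH(S))\simeq\Mod_R(L(\SH(S)))$, completing the equivalence on a fixed base.

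For the premotivic statement, I would check that each construction above is compatible with pullbacks $f^*$. This is essentially automatic: the premotivic adjunction $(\gamma^*,\gamma_*)$ commutes with $f^*$ by \S\ref{premot1}, $L$ is itself a premotivic localization so commutes with $f^*$, and the formation of modules is natural in the ambient symmetric monoidal presentable $\infty$-category. Thus the fibrewise equivalences assemble into a premotivic equivalence $\scrM\simeq\Mod_{L\gamma_*\gamma^*(\sspt)}(\SH(-))$. The main obstacle is the coherence upgrade in step (ii) --- promoting the projection formula from an equivalence of endofunctors to an equivalence of monads --- which is straightforward $1$-categorically but requires careful bookkeeping in the $\infty$-categorical setting. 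Everything else is either an application of Barr--Beck--Lurie or a formal consequence of symmetric monoidality of $L$.
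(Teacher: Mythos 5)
Your proposal is correct, and its skeleton coincides with the paper's: monadify via Lemma~\ref{lemma:bbl}, identify the monad $\gamma_*\gamma^*$ with $L\gamma_*\gamma^*(\sspt_S)\otimes(-)$ using the projection formula, and then invoke naturality for the premotivic statement. The one place where you genuinely diverge is the coherence step you flag as ``delicate'': the paper sidesteps the problem of upgrading an equivalence of endofunctors to an equivalence of monads by first constructing, in Lemma~\ref{lem:barrbeck2}, an explicit \emph{map} of monads $c\colon\gamma_*\gamma^*(\sspt_S)\otimes(-)\to\gamma_*\gamma^*$ out of the unit of the adjunction and the lax monoidal structure on $\gamma_*$; the projection formula then simply asserts that this already-coherent map is an underlying equivalence, hence an equivalence of monads, so no further bookkeeping is needed. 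Your alternative---factoring the symmetric monoidal adjunction through $\Mod_R(L(\SH(S)))$ and comparing monads there, or quoting \cite{higheralgebra}*{Proposition 4.8.5.8}---is precisely the refinement the paper itself mentions in the remark following the proof, so it is a legitimate substitute; note only that your phrase ``whose composition with the forgetful functor agrees with $\gamma^*$'' does not typecheck, and should say that precomposition with the free functor $L(\SH(S))\to\Mod_R(L(\SH(S)))$ recovers $\gamma^*$, equivalently that $\gamma_*$ factors through the forgetful functor. A genuine plus of your write-up is step (iii): the paper's proof tacitly passes between modules in $L(\SH(S))$ and modules in $\SH(S)$, whereas your retract argument (an $R$-module $M$ is a retract of $R\otimes M$, which is $L$-local because the localization is symmetric monoidal in the sense of \S\ref{premot1}) makes the identification $\Mod_R(\SH(S))\simeq\Mod_R(L(\SH(S)))$ explicit, which is exactly what is needed to obtain the statement as written. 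Your treatment of the premotivic upgrade by compatibility with $f^*$ matches the paper's appeal to naturality.
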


\subsubsection{} In light of Lemma~\ref{lemma:bbl}, we can prove Theorem~\ref{thm:main} by means of relating modules over monad $\gamma_*\gamma^*$ with modules over the motivic spectrum $\gamma_*\gamma^*(\sspt_S)$. Thus, given $S\in\scrS$ our task is to formulate a relationship between the two $\infty$-categories
\[
\LMod_{\gamma_*\gamma^*}(\SH(S))\quad \text{ and }\quad \LMod_{\gamma_*\gamma^*(\sspt_S) \otimes (-)}(\SH(S)).
\]
To do so, it suffices produce a map of monads 
\[
c\colon \gamma_*\gamma^*(\sspt_S) \otimes (-) \rightarrow \gamma_*\gamma^*,
\] 
which will induce a functor  
\[
c^*\colon  \LMod_{\gamma_*\gamma^*(\sspt_S) \otimes (-)}(\SH(S)) \rightarrow \LMod_{\gamma_*\gamma^*}(\SH(S)).
\]
For this, we appeal to a general lemma.

\begin{lemma} \label{lem:barrbeck2} Let $\C, \D$ be symmetric monoidal $\infty$-categories and suppose that we have an adjunction $F: \C \rightleftarrows \D: G$ such that $F$ is symmetric monoidal (so that $G$ is lax symmetric monoidal). Then there is a map of monads
\begin{equation} \label{c}
c\colon GF(1)\otimes(-)\to GF,
\end{equation}
which gives rise to a commutative diagram of adjunctions
$$\xymatrix{
\C  \ar@/^1.2pc/[rr]^-{\Free_{GF}}   \ar@/_1.2pc/[dr]_-{GF(1) \otimes (-)}&  & \RMod_{GF}(\C) \ar[ll]_{u} \ar[dl]_{c_*}  \\
 & \RMod_{GF(1)\otimes(-)}(\C) \ar[ul]_{u} \ar@/_1.2pc/[ur]_-{c^*}. &
}$$
\end{lemma}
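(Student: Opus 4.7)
My plan is to break the proof into three steps: construct the natural transformation $c$, verify that it is a morphism of monads, and extract the commutative diagram of adjunctions. For the construction, I begin by noting that since $F$ is symmetric monoidal its right adjoint $G$ inherits a canonical lax symmetric monoidal structure (\cite{higheralgebra}*{Corollary 7.3.2.7}), so the composite $T := GF\colon \C \to \C$ is lax symmetric monoidal. In particular $T(1) = GF(1)$ is an $\mathcal{E}_\infty$-algebra in $\C$, and $T(1) \otimes (-)$ thereby acquires the structure of a monad. I then define $c$ at $X\in\C$ by the composite
\[
c_X\colon T(1) \otimes X \xrightarrow{\id \otimes \eta_X} T(1) \otimes T(X) \xrightarrow{\mu^T_{1,X}} T(1 \otimes X) \simeq T(X),
\]
where $\eta$ is the unit of $F \dashv G$ and $\mu^T$ is the lax monoidal structure map of $T$.

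To verify that $c$ is a morphism of monads, one checks unit and multiplicative compatibility. Unit compatibility reduces to identifying the algebra unit $1 \to T(1)$ of $T(1)\simeq G(1)$ with the component $\eta_1$ of the adjunction unit, which is built into the construction of the lax monoidal structure on $G$. Multiplicative compatibility demands commutativity of a diagram involving two copies of $c$, the multiplication of the algebra $T(1)$ on one side, and the composite $TT \xrightarrow{G\varepsilon F} T$ on the other; this should follow from associativity of $\mu^T$ together with a triangle identity for the adjunction. I expect this step to be the main obstacle: rather than verifying higher coherences directly, the efficient approach is to derive $c$ from a map in the $\infty$-operadic framework of \cite{higheralgebra}*{\S 7.3}, where the monad-morphism property becomes automatic.

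Granted that $c$ is a monad morphism, the general theory of modules over a monad (\cite{higheralgebra}*{\S 4.7.3}) yields a restriction-of-scalars functor $c_*\colon \RMod_T(\C)\to \RMod_{T(1)\otimes(-)}(\C)$ which commutes with the forgetful functors to $\C$ by construction, giving the triangle $u \circ c_* \simeq u$. The left adjoint $c^*$ exists under mild presentability hypotheses on $\C$ (which hold in the motivic applications of interest); passing to left adjoints in the triangle of forgetful functors then yields $\Free_T \simeq c^* \circ (T(1) \otimes (-))$, completing the diagram.
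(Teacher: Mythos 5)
Your proposal matches the paper's proof: you construct the same map $c$ (tensor the unit of the adjunction with $GF(1)$ and then apply the lax monoidal structure map $GF(1)\otimes GF(-)\to G(F(1)\otimes F(-))\simeq GF(-)$), argue it is a morphism of monads using the lax/monoidal structures, and obtain the diagram by noting that $c_*$ commutes with the forgetful functors (with $c^*$ supplied by the adjoint functor theorem) and passing to left adjoints. This is essentially the same argument as in the paper, including the level of detail at which the monad-morphism coherences are handled.
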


\begin{proof} Since $F$ is monoidal and $G$ is lax monoidal, the functor $GF$ is lax monoidal. Hence $GF(1)$ is an algebra object of $\C$, and thus $GF(1) \otimes (-)$ is indeed a monad. We construct the map of monads $c\colon GF(1) \otimes (-) \rightarrow GF(-)$ by letting $c$ be the composite of the following maps of monads
\begin{align*}
 GF(1)  \otimes (-) & \simeq    (GF(1) \otimes (-) ) \circ \id \\
 & \xrightarrow{\id \circ \epsilon}    (GF(1) \otimes (-) ) \circ GF(-)  \\
 & \xrightarrow{\mu}  G(F(1) \otimes F(-)) \\
 & \simeq GF.
\end{align*}
Here $\epsilon$ is the unit of the adjunction $(F,G)$. The transformation $\epsilon$ is a map of monads via the triangle identities, and the map $\id \circ \epsilon$ is a map of monads since we are $\circ$-tensoring two maps of monads. The map $\mu$ is given by the lax monoidal structure of $G$; more precisely, we note that the endofunctor $G(A \otimes F(-))$ is a monad for any algebra object $A$, and so  $G(F(1) \otimes F(-))$ is in particular a monad. We have a canonical equivalence of monads 
\[(GF(1)  \otimes (-) ) \circ GF(-)  \simeq GF(1) \otimes GF(-).\] 
The lax structure of $G$ then provides a morphism of endofunctors 
\[GF(1) \otimes GF(-) \rightarrow G(F(1) \otimes F(-)) \simeq GF(-),\] 
and the lax structure also verifies that this is a map of monads. This gives rise to a functor $c_*\colon  \RMod_{GF}(\C) \rightarrow \RMod_{GF(1)\otimes (-)}(\C)$, which has a left adjoint by the adjoint functor theorem. 

To obtain the desired factorizations, we note that we have the following commutative diagram of forgetful functors
\[\xymatrix{
\C   &  & \RMod_{GF}(\C) \ar[ll]_{u} \ar[dl]_{c_*}  \\
 & \RMod_{GF(1)\otimes(-)}(\C) \ar[ul]_{u} . &
}\]
Thus the left adjoints also commute.
\end{proof}

\subsubsection{} \label{sect:start} We can now apply Lemma~\ref{lem:barrbeck2} to prove Theorem~\ref{thm:main}.

\begin{proof}[Proof of Theorem~\ref{thm:main}]  
We claim that the adjunction of Lemma~\ref{lem:barrbeck2},
\[
c^*: \LMod_{\gamma_*\gamma^*(\sspt_S)}(\SH(S)) \rightleftarrows \LMod_{\gamma_*\gamma^*}(\SH(S)): c_*,
\] 
is an equivalence.  By the construction in the proof of Lemma~\ref{lem:barrbeck2}, the above adjunction arises from a map of monads given by $c\colon  \gamma_*\gamma^*(\sspt_S) \otimes (-) \rightarrow \gamma_*\gamma^*$. Since $(\SH|_\scrS,\scrM)$ satisfies the projection formula, we conclude that the adjunction $(c^*,c_*)$ is an equivalence.

Now, note that Theorem~\ref{thm:barrbeck1} and Lemma~\ref{lem:barrbeck2} are phrased for $\mathcal{E}_1$-algebras and left modules. However, as $\gamma_*\gamma^*(\sspt)$ is an ${\mathcal E}_\infty$-ring spectrum by Lemma~\ref{lemma:eoo}, the $\infty$-categories of left and right $\gamma_*\gamma^*(\sspt)$-modules are equivalent. We thus conclude that there is an natural equivalence
\[
\Mod_{\gamma_*\gamma^*(\sspt_S)}(\SH(S))\simeq\scrM(S)
\] 
of $\infty$-categories, which carries $\gamma_*\gamma^*(\sspt_S)$ to the unit object $\gamma^*(\sspt_S)$ of $\scrM(S)$. Finally, if $\scrM$ satisfies the projection formula at any $S\in\scrS$, then the naturality of the above equivalence furnishes the equivalence of premotivic categories $\scrM\simeq \Mod_{\gamma_*\gamma^*(\sspt)}(\SH(-))$.
\end{proof}

\begin{remark} In fact, the above reduction can be achieved using a more refined version of Lurie's Barr--Beck theorem \cite{higheralgebra}*{Proposition 4.8.5.8}.\end{remark}

\begin{remark}
We were also informed by Niko Naumann that the above result is a consequence of \cite{mnn}*{Proposition 5.29}.
\end{remark}
In Section~\ref{sec:regk} we will provide examples for which the hypotheses of Theorem~\ref{thm:main} are satisfied.

\section{Correspondence categories} \label{sec:corr}


%
The prime examples of motivic module categories are built from various notions of correspondences. In this section we will give an axiomatization of $\infty$-categories that behave like the category of framed correspondences as in \cite{ehksy}; Suslin--Voevodsky's category of finite correspondences \cite{susvoe}, \cite{mvw}*{Chapters 1 and 2}; Calm\` es and Fasel's Milnor--Witt correpondences \cite{calmes-fasel, deglise-fasel}; Grothendieck--Witt correspondences \cite{druzhinin}; and, more recently, the categories of correspondences studied in \cite{ehksy2} and \cite{dk}. These examples will be discussed in \S\ref{sec:ex}. To begin with, consider the discrete category $\Sch_{S+},$ whose objects are $S$-schemes of the form $X_+$ and morphisms which preserve the base point. We consider the subcategory $\Sm_{S+} \subseteq \Sch_{S+}$ spanned by smooth $S$-schemes of finite type.  We will use heavily the \emph{nonabelian derived $\infty$-category} $\P_{\Sigma}(\C)$ associated to an $\infty$-category $\C$ with finite products; more detailed treatments of this construction can be found in \cite{bachmann-hoyois}*{Chapter 1} and \cite{htt}*{5.5.8}. 

\newcommand{\CorrCat}{\mathrm{CorrCat}}
\newcommand{\fincoprod}{\mathrm{fin.coprod}}
\newcommand{\PreAdd}{\mathrm{PreAdd}}

\begin{definition} \label{def:corr-infty} A \emph{correspondence category} (over a base scheme $S$) is a preadditive $\infty$-category $\C$\footnote{Recall that a preadditive $\infty$-category is one that is pointed, has finite products and coproducts and the map $X \coprod Y \rightarrow X \times Y$ is an equivalence for all $X, Y \in \C$.} equipped with a \emph{graph functor} 
\begin{equation}\label{graph}
\gamma_{\C}\colon \Sm_{S+} \rightarrow \C
\end{equation}
satisfying the following conditions:
\begin{enumerate}
\item The functor $\gamma_{\C}$ is essentially surjective and preserves finite coproducts \footnote{Including the empty coproduct, so that the $\gamma_C$ also preserves the base point of $\Sm_{S+}$.}, so that we get an induced functor
\[
\gamma_*\colon  \P_{\Sigma}(\C) \rightarrow \P(\Sm_S);\quad \mathscr{F} \mapsto \mathscr{F} \circ \gamma_C.
\]
\item The composite functor
\begin{equation}\label{free}
\Sm_{S+} \rightarrow \C \rightarrow \P_{\Sigma}(\C) \stackrel{\gamma_*}{\rightarrow} \P_{\Sigma}(\Sm_{S+}),
\end{equation}
has a right lax $\Sm_{S+}$-linear structure. We abusively denote the composite~\eqref{free} by $\gamma_C(-)$; the context will always make it clear.
\end{enumerate}
The $\infty$-category of correspondence categories $\CorrCat$ is defined as a full subcategory of $\PreAdd_{\infty, \Sm_{S+}/}$, the (large) $\infty$-category of small preadditive $\infty$-categories and functors that preserve finite coproducts equipped with a finite coproduct-preserving functor from $\Sm_{S+}$.\footnote{More succinctly, $\CorrCat$ is the pullback of $\infty$-categories $\PreAdd \times_{\Cat^{\amalg}_{\infty}} \{\Sm_{S+}\}$, where $\Cat^{\amalg}$ denotes $\infty$-categories with finite coproducts and finite coproduct-preserving functors.}
\end{definition}

\subsubsection{} We begin with a couple of clarifying remarks and an example.

\begin{remark} Informally, the $\Sm_{S+}$-linear structure on $\gamma_{\C}(-)$ encodes for any $X, Y \in \Sm_S$ maps
\[
X_+ \otimes \gamma_{\C}(Y_+) \rightarrow \gamma_{\C}(X_+ \otimes Y_+)
\]
in $\P_{\Sigma}(\Sm_{S+})\simeq \P_{\Sigma}(\Sm_S)_*$, which are subject to various compatibilites. For example, if $f\colon  X_+ \rightarrow Z_+$ is a map in $\Sm_{S+}$ then we have a $2$-cell witnessing the commutativity of
\[
\xymatrix{
X_+ \otimes \gamma_{C}(Y_+) \ar[d]_{f \otimes \id} \ar[r] & \gamma_{C}(X_+ \otimes Y_+) \ar[d]^{\gamma_{C}(f \otimes \id)} \\ 
Z_+ \otimes \gamma_{C}(Y_+) \ar[r] & \gamma_{C}(Z_+ \otimes Y_+).
}
\]
Similarly, if $g\colon  Y_+ \rightarrow Z_+$ is a map in $\Sm_{S+}$ then we have a $2$-cell witnessing the commutativity of
\[
\xymatrix{
X_+ \otimes \gamma_{C}(Y_+) \ar[d]_{\id \otimes g} \ar[r] & \gamma_{C}(X_+ \otimes Y_+) \ar[d]^{\gamma_{C}(\id \otimes g)} \\ 
X_+ \otimes \gamma_{C}(Z_+) \ar[r] & \gamma_{C}(X_+ \otimes Z_+).
}
\]
These cells are required satisfy an infinite list of coherences. 
\end{remark}

\begin{remark} \label{rem:smc} The $\Sm_{S+}$-linearity assumption will be satisfied if $\C$ has a symmetric monoidal structure and the functor $\gamma_\C$ is symmetric monoidal. In more detail, we denote by $\CorrCat^{\otimes}$ the $\infty$-category of preadditive $\infty$-categories with a symmetric monoidal structure such that the graph functor $\gamma_\C\colon  \Sm_{S+} \rightarrow \C$ is symmetric monoidal, essentially surjective and preserves finite coproducts. There is a forgetful functor $\CorrCat^{\otimes} \rightarrow \CorrCat$; the second part of Definition~\ref{def:corr-infty} is obtained from the strong symmetric monoidality of $\gamma_C$. This is the case in the examples considered in this paper, but we include it as an axiom to clarify proofs of certain properties. 
\end{remark}

\begin{example} Let $\Cor_S^{\mathrm{clopen}}$ denote the discrete category whose objects are smooth $S$-schemes and morphisms are $X \hookleftarrow Y \rightarrow Z$ such that $X \hookleftarrow Y$ is a summand inclusion. There is an equivalence of categories $\Sm_{S+} \stackrel{\simeq}{\rightarrow} \Cor_S^{\mathrm{clopen}}$ which takes a morphism $f\colon  X_+ \rightarrow Y_+$ to $X \hookleftarrow f^{-1}(Y) \rightarrow Y.$ This graph functor witnesses $\Cor_S^{\mathrm{clopen}}$ as a correspondence category.
\end{example}

\subsubsection{} We now provide some elementary properties of a correspondence category.

\begin{proposition} \label{prop:formal1} Let $\C$ be a preadditive $\infty$-category equipped with an essential surjection 
\[
\gamma_{\C}\colon  \Sm_k \rightarrow \C
\] 
which preserves coproducts, and let $\gamma_{\C*}$ denote the induced functor 
\[
\gamma_{\C*}\colon  \P_{\Sigma}(\C) \rightarrow \P_{\Sigma}(\Sm_S);\quad \mathscr{F} \mapsto \mathscr{F} \circ \gamma_C.
\] 
Then the following properties hold:
\begin{enumerate}
\item The $\infty$-category $\P_{\Sigma}(\C)$ is presentable and preadditive.
\item The functor $\gamma_{C*}$ preserves sifted colimits.
\item The functor $\gamma_{C*}$ is conservative.
\end{enumerate}
\end{proposition}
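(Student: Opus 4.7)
The plan is to establish each of the three items as a formal consequence of the general theory of the nonabelian derived $\infty$-category $\P_{\Sigma}$ developed in \cite{htt}*{\S5.5.8}, combined with the two hypotheses that $\gamma_{\C}$ is essentially surjective and preserves finite coproducts. The key facts to invoke are: $\P_{\Sigma}(\C)$ is presentable; the inclusion $\P_{\Sigma}(\C)\hookrightarrow \P(\C)$ creates sifted colimits; and the Yoneda embedding $y\colon \C\hookrightarrow \P_{\Sigma}(\C)$ preserves whichever finite coproducts exist in $\C$.

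For part (1), presentability of $\P_{\Sigma}(\C)$ is immediate. For preadditivity, I would first observe that $y$ preserves finite products (since finite products in $\P_{\Sigma}(\C)$ are computed pointwise) and finite coproducts (by the sifted cocompletion property applied to $\C$, which has finite coproducts). Therefore the comparison map $y(X)\sqcup y(Y) \to y(X)\times y(Y)$ in $\P_{\Sigma}(\C)$ is the image under $y$ of the analogous comparison in $\C$, which is an equivalence by preadditivity of $\C$. Since every object of $\P_{\Sigma}(\C)$ is a sifted colimit of representables, and both finite products (pointwise, in $\Spc$) and finite coproducts commute with sifted colimits, the shear map remains an equivalence for arbitrary objects of $\P_{\Sigma}(\C)$. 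This gives preadditivity.

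For parts (2) and (3), the central observation is that both sifted colimits and equivalences in $\P_{\Sigma}(\C)$ and $\P_{\Sigma}(\Sm_S)$ are detected pointwise. Part (2) then follows because $\gamma_{\C*} = (-)\circ \gamma_{\C}$ is precomposition, which manifestly preserves pointwise colimits. For part (3), a natural transformation $\alpha\colon \mathscr{F}\to\mathscr{G}$ in $\P_{\Sigma}(\C)$ is an equivalence iff $\alpha_c$ is an equivalence in $\Spc$ for every $c\in \C$; by essential surjectivity of $\gamma_{\C}$, every such $c$ is equivalent to some $\gamma_{\C}(X)$, so pointwise equivalence on $\C$ is equivalent to pointwise equivalence on the image of $\gamma_{\C}$, which is precisely the statement that $\gamma_{\C*}(\alpha)$ is an equivalence in $\P_{\Sigma}(\Sm_S)$.

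I do not foresee any serious obstacle; the entire argument is a bookkeeping exercise in the universal property of $\P_{\Sigma}$. The one point meriting careful formulation is the transfer of preadditivity in (1), where one must combine Yoneda preservation of finite coproducts with the commutation of finite products with sifted colimits in $\Spc$. Parts (2) and (3) are essentially immediate from the pointwise character of (co)limits in presheaf $\infty$-categories together with essential surjectivity of $\gamma_{\C}$.
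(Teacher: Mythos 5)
Your proposal is correct and follows essentially the same route as the paper: presentability of $\P_{\Sigma}(\C)$ is quoted from Lurie, part (2) follows from the pointwise computation of sifted colimits in $\P_{\Sigma}$ together with the fact that $\gamma_{\C*}$ is precomposition, and part (3) follows from essential surjectivity of $\gamma_{\C}$. The only difference is cosmetic: for preadditivity in (1) the paper simply cites Gepner--Groth--Nikolaus (Corollary 2.4), whereas you unwind that citation into the standard argument (Yoneda preserves the finite products and coproducts existing in $\C$, every object is a sifted colimit of representables, and both finite products and finite coproducts commute with sifted colimits), which is fine.
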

\begin{proof}
Presentability of $\P_{\Sigma}(\C)$ is \cite{htt}*{Proposition 5.5.8.10.1}, while $\P_{\Sigma}$ applied to a preadditive $\infty$-category is again preadditive by \cite{ggn}*{Corollary 2.4}. The functor $\gamma_{C*}$ preserves sifted colimits since sifted colimits are computed pointwise  (a direct consequence of \cite{htt}*{5.5.8.4.10} parts $4$ and $5$), while $\gamma_{C*}$ is conservative since $\gamma_{\C}$ is essentially surjective.
\end{proof}

\subsubsection{}\label{sect:extend} The composite of $\gamma_\C$ with Yoneda functor $\Sm_{S+} \stackrel{\gamma_\C}{\rightarrow} \C \stackrel{y}{\rightarrow} \P_{\Sigma}(\C)$ has a canonical sifted colimit-preserving extension $\gamma^*_\C\colon  \P_{\Sigma}(\Sm_{S+}) \rightarrow \P_{\Sigma}(C)$. It is easy to check that $\gamma_{\C*}$ is the right adjoint to $\gamma_\C^*$ and thus $\gamma^*_\C$ preserves all small colimits. As a result, we have an adjunction

\begin{equation} \label{gamma-adjunction}
\gamma^*_{C}: \P_{\Sigma}(\Sm_{S+}) \rightleftarrows \P_{\Sigma}(\C): \gamma_{C*}.
\end{equation}
It is also easy promote the $\Sm_{S+}$-linear structure given by the second axiom of a correspondence category to a $\P_{\Sigma}(\Sm_{S+})$-linear structure so that the functor
\[
\gamma_{C*}\circ \gamma_C^*\colon  \P_{\Sigma}(\Sm_{S+}) \rightarrow \P_{\Sigma}(\Sm_{S+})
\]
extends to a right lax $\P_{\Sigma}(\Sm_{S+})$-linear functor.

%
%

\subsubsection{} Now we would like to do motivic homotopy theory on $\C$. Recall that if $X, Y \in \P_{\Sigma}(\Sm_{S+})$, then $X$ is \emph{$\AA^1$-homotopy equivalent to $Y$} if there are maps $f\colon  X \rightarrow Y, g\colon  Y \rightarrow X$ and $\AA^1$-homotopies $H\colon  \AA^1_+ \otimes X \rightarrow X, H'\colon  \AA^1_+ \otimes Y \rightarrow Y$ from $gf$ and $fg$ to the respective identity morphisms. We note that any $\AA^1$-homotopy equivalence is an $\LL_{\AA^1}$-equivalence \cite{morel-voevodsky}*{§2 Lemma 3.6}. 

\begin{lemma} \label{lem:homotopy-pres} The functor $\gamma_{C}\colon  \P_{\Sigma}(\Sm_{S+}) \rightarrow \P_{\Sigma}(\Sm_{S+})$ preserves $\AA^1$-homotopy equivalences.
\end{lemma}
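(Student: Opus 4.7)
The plan is to establish a stronger statement, namely that the endofunctor $\gamma_C := \gamma_{C*}\gamma_C^*$ of $\P_\Sigma(\Sm_{S+})$ sends any $\AA^1$-homotopy between maps to an $\AA^1$-homotopy between their images. Once this is in hand, a homotopy equivalence $f\colon X\to Y$ with inverse $g\colon Y\to X$ and homotopies $H,H'$ is transported to the pair $\gamma_C(f), \gamma_C(g)$ with homotopies $\gamma_C(H), \gamma_C(H')$, giving the desired conclusion.

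The construction is immediate from the right lax $\P_\Sigma(\Sm_{S+})$-linear structure on $\gamma_{C*}\gamma_C^*$ promoted in \S\ref{sect:extend}. Given an $\AA^1$-homotopy $H\colon \AA^1_+\otimes X\to Y$ between maps $\phi,\psi\colon X\rightrightarrows Y$, I would form the candidate
\[
\widetilde{H}\colon \AA^1_+\otimes \gamma_C(X)\xrightarrow{\mu_{\AA^1_+,X}} \gamma_C(\AA^1_+\otimes X)\xrightarrow{\gamma_C(H)}\gamma_C(Y),
\]
where $\mu_{-,-}$ denotes the lax linearity constraint. The content of the proof is then to check that restriction of $\widetilde{H}$ along $i_\epsilon\colon S_+\to\AA^1_+$ (for $\epsilon\in\{0,1\}$) recovers $\gamma_C(\phi)$ and $\gamma_C(\psi)$ respectively. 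This is precisely the role of the $2$-cells described in the remark following Definition~\ref{def:corr-infty}: naturality of $\mu$ in the first variable with respect to the morphism $i_\epsilon\colon S_+\to\AA^1_+$ in $\Sm_{S+}$ produces a commuting square
\[
\xymatrix{
S_+\otimes \gamma_C(X)\ar[d]_{i_\epsilon\otimes\id}\ar[r]^{\mu_{S_+,X}} & \gamma_C(S_+\otimes X)\ar[d]^{\gamma_C(i_\epsilon\otimes\id)} \\
\AA^1_+\otimes \gamma_C(X)\ar[r]_{\mu_{\AA^1_+,X}} & \gamma_C(\AA^1_+\otimes X),
}
\]
and the unit axiom of a lax $\P_\Sigma(\Sm_{S+})$-linear action identifies $\mu_{S_+,X}$ with the canonical equivalence $S_+\otimes\gamma_C(X)\simeq \gamma_C(X)\simeq \gamma_C(S_+\otimes X)$. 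Postcomposing with $\gamma_C(H)$ gives $\gamma_C(H\circ(i_\epsilon\otimes\id_X))$, which is $\gamma_C(\phi)$ or $\gamma_C(\psi)$ as required.

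The main technical subtlety is book-keeping the unit coherence of the lax $\P_\Sigma(\Sm_{S+})$-linear structure, which is what ensures that the restriction of $\widetilde{H}$ to the endpoints identifies with $\gamma_C$ applied to the corresponding endpoint of $H$. However, this is part of the data of a lax linear action, so the verification is essentially formal. I would remark that in the symmetric monoidal situation of Remark~\ref{rem:smc}, one can avoid this bookkeeping entirely: there the functor $\gamma_C$ is strong symmetric monoidal, so $\mu$ is invertible and the preservation of homotopies is transparent.
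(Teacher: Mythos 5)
Your proof is correct and takes essentially the same approach as the paper: form the composite homotopy $\AA^1_+\otimes\gamma_C(X)\to\gamma_C(\AA^1_+\otimes X)\to\gamma_C(Y)$ using the lax linearity constraint, and check it restricts at the endpoints to $\gamma_C(f)$ and $\gamma_C(g)$. The paper leaves the endpoint verification implicit, whereas you spell it out via naturality of $\mu$ in the first variable and the unit coherence of the lax action; the extra detail is accurate but does not change the argument.
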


\begin{proof} Suppose that we have a homotopy $H\colon \AA^1_+ \otimes X_+  \rightarrow Y$ between maps $f, g\colon  X \rightarrow Y$. We obtain, using the right lax-structure, a homotopy
\[
\AA^1_+ \otimes \gamma_{C}(X) \rightarrow \gamma_{C}(\AA^1 \times X) \rightarrow \gamma_{C}(Y)
\]
between $\gamma_{C}(f)$ and $\gamma_{C}(g).$
\end{proof}

\begin{lemma} \label{prop:pres} The functor $\gamma_{C}\colon  \P_{\Sigma}(\Sm_{S+}) \rightarrow \P_{\Sigma}(\Sm_{S+})$ preserves $\LL_{\AA^1}$-equivalences. 
\end{lemma}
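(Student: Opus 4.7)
The plan is to bootstrap from Lemma~\ref{lem:homotopy-pres}, which already handles $\AA^{1}$-homotopy equivalences, to all $\LL_{\AA^{1}}$-equivalences, by means of a strongly saturated class argument. As an initial observation, the composite $\gamma_{C}=\gamma_{C*}\circ\gamma_{C}^{*}$ preserves sifted colimits: $\gamma_{C}^{*}$ is a left adjoint by the discussion in \S\ref{sect:extend} and hence preserves all colimits, while $\gamma_{C*}$ preserves sifted colimits by Proposition~\ref{prop:formal1}. Moreover, the class of $\LL_{\AA^{1}}$-equivalences in $\P_{\Sigma}(\Sm_{S+})$ is by definition the strongly saturated class generated by the projections $\pi_{X}\colon\AA^{1}\times X_{+}\to X_{+}$ for $X\in\Sm_{S}$, each of which is an $\AA^{1}$-homotopy equivalence with the zero-section as a homotopy inverse. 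Hence, by Lemma~\ref{lem:homotopy-pres}, each $\gamma_{C}(\pi_{X})$ is an $\AA^{1}$-homotopy equivalence, in particular an $\LL_{\AA^{1}}$-equivalence.

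Let $W$ denote the class of morphisms $f$ in $\P_{\Sigma}(\Sm_{S+})$ such that $\gamma_{C}(f)$ is an $\LL_{\AA^{1}}$-equivalence. The observations above imply that $W$ satisfies the 2-out-of-3 property (inherited from the target class), contains every $\pi_{X}$, and is closed under sifted colimits. It remains to argue that these closure properties are enough to force $W$ to contain the entire strongly saturated class generated by the $\pi_{X}$'s.

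The main obstacle is that $\gamma_{C*}$ is a right adjoint and does not a priori preserve general colimits such as pushouts, so the strongly saturated closure is not directly accessible from what we have. To bridge this gap I would leverage the right lax $\P_{\Sigma}(\Sm_{S+})$-linear structure on $\gamma_{C}$ recalled in \S\ref{sect:extend}. Since the $\AA^{1}$-localization on $\P_{\Sigma}(\Sm_{S+})$ is a smashing symmetric monoidal localization, $\LL_{\AA^{1}}$-equivalences are stable under tensoring with an arbitrary object, and the lax-linear structure should translate this stability through $\gamma_{C}$: concretely, one would show that $W$ is closed under tensoring with representables, and then express every generator of the strongly saturated closure as a sifted colimit of such tensors of the projections $\pi_{X}$. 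The delicate point is that the lax comparison maps $Y\otimes\gamma_{C}(X)\to\gamma_{C}(Y\otimes X)$ need not be equivalences in general, so one must argue that after applying $\gamma_{C}$ their failure becomes itself an $\AA^{1}$-equivalence, which is exactly the kind of statement that Lemma~\ref{lem:homotopy-pres} is well suited to provide.
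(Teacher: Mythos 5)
You set things up exactly as the paper does—$\gamma_C$ preserves sifted colimits, the $\LL_{\AA^1}$-equivalences are the strong saturation of the projections $\pi_X$, and Lemma~\ref{lem:homotopy-pres} handles each $\gamma_C(\pi_X)$—but the step you yourself flag as remaining ("that these closure properties are enough to force $W$ to contain the entire strongly saturated class") is precisely the content of the lemma, and your proposed bridge does not supply it. Strong saturation also demands closure under cobase change, which your class $W$ is not known to have since $\gamma_{C*}$ is only a sifted-colimit-preserving right adjoint. The paper's resolution is not the module structure at all but a structural fact about $\P_\Sigma$: by \cite{bachmann-hoyois}*{Lemma 2.10}, the class of $\LL_{\AA^1}$-equivalences is already generated under $2$-out-of-$3$ and \emph{sifted} colimits by the maps $\pi_X\amalg\id_{Y_+}$ with $Y\in\Sm_S$. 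Since a correspondence category's graph functor preserves finite coproducts by axiom, $\gamma_C(\pi_X\amalg\id_{Y_+})\simeq\gamma_C(\pi_X)\amalg\gamma_C(\id_{Y_+})$ is an $\AA^1$-homotopy equivalence by Lemma~\ref{lem:homotopy-pres}, and then $2$-out-of-$3$ and sifted-colimit closure of $W$ finish the argument; no closure under pushouts is ever needed.

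Your substitute route via the right lax $\P_{\Sigma}(\Sm_{S+})$-linear structure is not a proof as it stands. The key claim—that every generator of the strongly saturated closure can be expressed as a sifted colimit of tensors of the $\pi_X$ with representables—is exactly the nontrivial reduction you would need, is nowhere justified, and is essentially a (tensor-flavored) stand-in for the missing citation; the correct building blocks are coproducts $\pi_X\amalg\id_{Y_+}$, not tensors, and those are accessible because coproduct preservation is an axiom. Moreover, the "delicate point" you defer—that the failure of the lax comparison maps $Y\otimes\gamma_C(X)\to\gamma_C(Y\otimes X)$ becomes an $\AA^1$-equivalence after applying $\gamma_C$—is not something Lemma~\ref{lem:homotopy-pres} provides: that lemma only says $\gamma_C$ carries $\AA^1$-homotopies to $\AA^1$-homotopies, while these lax maps are genuinely not equivalences in the examples of interest and nothing in the present framework controls their failure. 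So the gap is the missing reduction of the strong saturation to operations $\gamma_C$ actually preserves; once \cite{bachmann-hoyois}*{Lemma 2.10} is invoked, your first two paragraphs become the paper's proof.
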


\begin{proof} By definition the class of $\LL_{\AA^1}$-equivalences is the strong saturation, in the sense of \cite{htt}*{Proposition 5.5.4.5}, of the maps in $\P_{\Sigma}(\Sm_{S+})$ by the (Yoneda image of) $\AA^1$-projections $\pi_X\colon  (\AA^1 \times X)_+ \simeq \AA^1_+ \otimes X_+ \rightarrow X_+$ for $X \in \Sm_S.$ According to \cite{bachmann-hoyois}*{Lemma 2.10} the class of $\LL_{\AA^1}$-equivalences is then generated under $2$-out-of-$3$ and sifted colimits by maps of the form $\pi_X \amalg \id_{Y_+}$ where $Y \in \Sm_S$. 

Since $\pi_X$ is an $\AA^1$-homotopy equivalence, it follows from Lemma~\ref{lem:homotopy-pres} that $\gamma_{C}(\pi_X)$ is an $\AA^1$-homotopy equivalence. Since $\gamma_{\C}$ preserves coproducts by assumption, the same is true for the morphism
\[
\gamma_{\C}(\pi_X \amalg \id_{Y_+}) \simeq \gamma_{C}(\pi_X) \amalg \gamma_{C}(\id_{Y_+}).
\] 
The functor $\gamma_{C}$ clearly preserves the $2$-out-of-$3$-property. Lastly, the functor $\gamma_{\C}$ preserves sifted colimits by definition and sifted colimits are computed valuewise in $\P_{\Sigma}(\Sm_{S+})^{\Delta^1}.$ Hence we conclude that $\gamma_{C}$ preserves $\LL_{\AA^1}$-equivalences. 
\end{proof}

\subsubsection{} 
Now we take into account a topology that we might want to put on $\Sm_{S+}$, namely, the topology of \emph{coproduct decomposition}. This is a topology on $\Sm_{S+}$ defined by a cd-structure, denoted by $\amalg,$ generated by squares
\[
\xymatrix{
S \ar[r] \ar[d] & U_+ \ar[d] \\
V_+ \ar[r] & X_+
}
\]
where $U$ and $V$ are summands of $X$ and $U \amalg V = X.$ Sheaves with respect to the topology generated by this cd-structure is exactly the nonabelian derived category on $\C.$ In other words we have
\[
\Shv_{\amalg}(\Sm_{S+}) \simeq \P_{\Sigma}(\Sm_{S+})
\]
by \cite{bachmann-hoyois}*{Lemma 2.4}. Hence all topologies $\tau$ considered in this paper satisfy $\Shv_{\tau}(\Sm_{S+}) \subseteq \P_{\Sigma}(\Sm_{S+}).$

\begin{definition} \label{tau-corr} Let $\tau$ be a topology on $\Sm_S$. Let $\C$ be a correspondence category with graph functor $\gamma_C\colon  \Sm_{S+} \rightarrow \C$. Then $\C$ \emph{is compatible with $\tau$} if for every $\tau$-sieve $U \hookrightarrow X$ in $\Sm_S$, the natural map 
\[
\gamma_\C(U_+) \rightarrow \gamma_\C(X_+)
\]
is an $\LL_{\tau}$-equivalence in $\P_{\Sigma}(\Sm_{S+})$.
\end{definition}

\begin{lemma} \label{lem:tau} 
Suppose that $\C$ is a correspondence category which is compatible with $\tau$. Then the functor 
\[
\gamma_{C}\colon  \P_{\Sigma}(\Sm_{S+}) \rightarrow \P_{\Sigma}(\Sm_{S+})
\] 
preserves $\LL_{\tau}$-equivalences.
\end{lemma}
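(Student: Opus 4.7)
The plan is to adapt the argument of Lemma~\ref{prop:pres} almost verbatim, replacing the role of the $\AA^1$-projections by the generating $\tau$-sieves. Concretely, by the same reasoning as in \cite{bachmann-hoyois}*{Lemma 2.10} applied to the topology $\tau$ on $\Sm_S$, the class of $\LL_\tau$-equivalences in $\P_{\Sigma}(\Sm_{S+})$ is the strong saturation under $2$-out-of-$3$ and sifted colimits of maps of the form
\[
(U_+\hookrightarrow X_+)\amalg\id_{Y_+},
\]
where $U\hookrightarrow X$ runs over generating $\tau$-sieves in $\Sm_S$ and $Y\in\Sm_S$. So it will suffice to verify that $\gamma_\C$ sends each such generator to an $\LL_\tau$-equivalence and that $\gamma_\C$ preserves $2$-out-of-$3$ and sifted colimits.

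The main step will be handling the generators. Since $\gamma_\C$ preserves finite coproducts by axiom~(1) of Definition~\ref{def:corr-infty}, one has
\[
\gamma_\C\!\left((U_+\hookrightarrow X_+)\amalg\id_{Y_+}\right)\simeq\bigl(\gamma_\C(U_+)\to\gamma_\C(X_+)\bigr)\amalg\id_{\gamma_\C(Y_+)}.
\]
The first factor is an $\LL_\tau$-equivalence by the compatibility hypothesis encoded in Definition~\ref{tau-corr}, and taking the coproduct with an identity preserves $\LL_\tau$-equivalences, since $\LL_\tau$-equivalences are stable under finite coproducts.

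The remaining two closure properties are essentially automatic. Preservation of $2$-out-of-$3$ holds for any functor, and preservation of sifted colimits follows from the factorization $\gamma_\C\simeq\gamma_{\C*}\circ\gamma_\C^*$ of \S\ref{sect:extend}: the functor $\gamma_\C^*$ is defined as the sifted-colimit-preserving extension of $\Sm_{S+}\to\C\to\P_\Sigma(\C)$, while $\gamma_{\C*}$ preserves sifted colimits by Proposition~\ref{prop:formal1}(2). I do not anticipate any real obstacle: the whole point of Definition~\ref{tau-corr} is precisely to make this saturation argument go through, so the proof amounts to unpacking the definitions and reusing the structural properties of $\gamma_\C$ already established.
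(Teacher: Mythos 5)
Your argument is correct and follows essentially the same route as the paper: reduce via the strong saturation description and \cite{bachmann-hoyois}*{Lemma 2.10} to generators of the form $(U_+\hookrightarrow X_+)\amalg\id_{Y_+}$, handle these using coproduct-preservation of $\gamma_\C$ and the compatibility hypothesis of Definition~\ref{tau-corr}, and invoke the same closure properties (2-out-of-3, sifted colimits) as in the proof of Lemma~\ref{prop:pres}. No issues.
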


\begin{proof}
By definition, the class of $\LL_{\tau}$-equivalences is the strong saturation, in the sense of \cite{htt}*{Proposition 5.5.4.5}, of the maps in $\P_{\Sigma}(\Sm_{S+})$ by the (Yoneda image of the) maps $i_+\colon  U_+ \hookrightarrow X_+$ where $X \in \Sm_S$ and $i$ is a $\tau$-sieve. According to \cite{bachmann-hoyois}*{Lemma 2.10}, the class of $\LL_{\tau}$-equivalences is then generated under $2$-out-of-$3$ and sifted colimits by maps of the form $\pi_X \amalg \id_{Y_+}$ for $Y \in \Sm_S$.  By the same reasoning as in Proposition~\ref{prop:pres} we need only check that $\gamma_\C(U_+) \rightarrow \gamma_\C(X_+)$ is an $\LL_{\tau}$-equivalence which is true by hypothesis. 
\end{proof}
From now on, whenever we consider a correspondence category $\C$ we make the following assumption on the topologies we discuss:
\begin{itemize}
\item The topology $\tau$ is at least as fine as the Nisnevich topology and is compatible in the sense of Definition~\ref{tau-corr}.
\end{itemize}

\subsubsection{} If $\C$ is a correspondence category, then we can construct its unstable motivic homotopy $\infty$-category in the usual way, as we now do. We consider two full subcategories of  $\P_{\Sigma}(\C)$ spanned by objects $\mathscr{F}$ satisfying the usual conditions:
\begin{itemize}
\item [($\P_{\AA^1}(\C)$)] The presheaf $\mathscr{F} \circ \gamma_C\colon  \Sm_S^{\op} \rightarrow \Spc$ is $\AA^1$-invariant. We denote the $\infty$-category spanned by such $\mathscr{F}$'s by $\P_{\AA^1}(\C).$
\item [($\Shv_{\tau}(\C)$)] The presheaf $\mathscr{F} \circ \gamma_C\colon  \Sm_S^{\op} \rightarrow \Spc$ is a $\tau$-sheaf. We denote the $\infty$-category spanned by such $\mathscr{F}$'s by $\Shv_{\tau}(\C).$
\end{itemize}

Since $\P_{\Sigma}(\C)$ is preadditive by Proposition~\eqref{prop:formal1}, we have a canonical equivalence $\CMon(\P_{\Sigma}(\C)) \simeq \P_{\Sigma}(\C).$ The $\infty$-category of \emph{unstable $\C$-motives}, denoted by $\H_{\tau}(\C)$, is then defined as $\P_{\AA^1}(\C) \cap \Shv_{\tau}(\C) \subseteq \P_{\Sigma}(\C)$. As usual we have localization functors $\LL_{\tau}^{\C}\colon  \P_{\Sigma}(\C) \rightarrow \Shv_{\tau}(\C),$ $\LL^{\C}_{\AA^1}\colon  \P_{\Sigma}(\C) \rightarrow \P_{\AA^1}(\C)$ and $\LL^{\C}_{\mot,\tau}\colon  \P_{\Sigma}(\C) \rightarrow \H_{\tau}(\C).$ From the construction of these localizations and the assumption on $\tau$, the adjunction~\eqref{gamma-adjunction} descends to an adjunction

\begin{equation} \label{gamma-adjunction}
\gamma^*_{C}: \H_{\tau}(\Sm_{S+}) \simeq \H_{\tau}(S)_{*} \rightleftarrows \H_{\tau}(\C): \gamma_{C*}
\end{equation}

\begin{lemma} \label{lem:preadd} 
The $\infty$-category $\H_{\tau}(\C)$ is preadditive. Hence we have a canonical equivalence $\CMon(\H_{\tau}(\C)^{\times}) \simeq \H_{\tau}(\C).$
\end{lemma}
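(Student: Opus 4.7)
The plan is to deduce preadditivity of $\H_{\tau}(\C)$ from that of $\P_{\Sigma}(\C)$ (which was established in Proposition~\ref{prop:formal1}(1)) by exhibiting $\H_{\tau}(\C)$ as a reflective subcategory closed under finite products, and then invoking \cite{ggn}*{Proposition 2.3} (or a minor variant thereof) to conclude that the equivalence $\CMon(\H_{\tau}(\C)^{\times}) \simeq \H_{\tau}(\C)$ is automatic.

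First I would verify that $\H_{\tau}(\C)=\P_{\AA^1}(\C)\cap\Shv_{\tau}(\C)\subseteq\P_{\Sigma}(\C)$ is closed under finite products. Indeed, finite products in $\P_{\Sigma}(\C)$ are computed objectwise, the condition of being $\AA^1$-invariant is stable under products, and a finite product of $\tau$-sheaves is again a $\tau$-sheaf. In particular the zero object (the terminal presheaf) lies in $\H_{\tau}(\C)$, so $\H_{\tau}(\C)$ is pointed, and the inclusion $\H_{\tau}(\C)\hookrightarrow\P_{\Sigma}(\C)$ preserves finite products. The localization functor $\LL^{\C}_{\mot,\tau}\colon\P_\Sigma(\C)\to\H_\tau(\C)$ provides finite colimits in $\H_\tau(\C)$ by reflecting those in $\P_\Sigma(\C)$, so $\H_\tau(\C)$ admits finite products and coproducts.

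Next I would check that the canonical map $X\amalg Y\to X\times Y$ in $\H_\tau(\C)$ is an equivalence for any $X,Y\in\H_\tau(\C)$. Since $\P_\Sigma(\C)$ is preadditive, the analogous map is already an equivalence there. Applying $\LL^{\C}_{\mot,\tau}$, using that it preserves coproducts (as a left adjoint) and that products of local objects are already local (so the product in $\H_\tau(\C)$ equals the product computed in $\P_\Sigma(\C)$), yields the desired equivalence in $\H_\tau(\C)$. This establishes preadditivity.

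For the second assertion, I would appeal to \cite{ggn}*{Proposition 2.3}, which states that for a presentable $\infty$-category $\mathcal{D}$ with finite products, preadditivity is equivalent to the forgetful functor $\CMon(\mathcal{D}^\times)\to\mathcal{D}$ being an equivalence. Since $\H_\tau(\C)$ is presentable (being a reflective subcategory of the presentable $\infty$-category $\P_\Sigma(\C)$ defined by an accessible localization) and preadditive by the above, the conclusion follows. I do not expect any serious obstacle here; the only point requiring mild care is the verification that products of $\AA^1$-invariant $\tau$-sheaves remain $\AA^1$-invariant $\tau$-sheaves, which is immediate from the pointwise nature of products in $\P_\Sigma(\C)$.
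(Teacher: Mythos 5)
Your proposal is correct and follows essentially the same route as the paper: one checks that $\H_{\tau}(\C)$ is closed under finite products in $\P_{\Sigma}(\C)$ (since the $\AA^1$-invariance and $\tau$-descent conditions are stable under pointwise products) and then deduces preadditivity from that of $\P_{\Sigma}(\C)$ established in Proposition~\ref{prop:formal1}, with the $\CMon$ statement coming from \cite{ggn}. The paper's proof is just a terser version of your argument, leaving implicit the transfer of the equivalence $X\amalg Y\simeq X\times Y$ through the localization that you spell out.
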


\begin{proof} The $\infty$-category $\H_{\tau}(\C)$ is closed under finite products by checking that the conditions (Htpy) and ($\tau$-Desc) are preserved under taking products which are computed pointwise. The statement follows since $\P_{\Sigma}(\C)$ is preadditive by Proposition~\ref{prop:formal1}.
\end{proof}

\begin{definition}The $\infty$-category of \emph{effective $\C$-motives} $\H_{\tau}(\C)^{\mathrm{gp}}$ is defined to be the full subcategory of $\H_{\tau}(\C)$ spanned by the grouplike objects, in the sense of~\cite{ggn}*{Definition 1.2}.
\end{definition}

\subsubsection{} The next proposition captures the main property of categories of correspondences from the point of view of motivic homotopy theory.

\begin{proposition} \label{lem:compatible} Suppose that $\C$ is a correspondence category which is compatible with $\tau$. Then the functor
\[
\gamma_{C*}\colon \H_{\tau}(\C) \rightarrow \H_{\tau}(S)_*
\]
preserves sifted colimits and is conservative. Furthermore, $\H_{\tau}(\C)$ is canonically an $\H(S)_*$-module.
\end{proposition}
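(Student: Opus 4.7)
The plan is to reduce each of the three claims to statements at the level of $\P_\Sigma$, where they follow from Proposition~\ref{prop:formal1}, and then lift them to the motivic level by showing that the fully faithful inclusions $\H_\tau(\C)\hookrightarrow\P_\Sigma(\C)$ and $\H_\tau(S)_*\hookrightarrow\P_\Sigma(\Sm_{S+})$ interact well with the functor $\gamma_{C*}$.

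Conservativity is the easiest. A morphism $f$ in $\H_\tau(\C)$ is an equivalence if and only if it is one in $\P_\Sigma(\C)$, since $\H_\tau(\C)$ is a reflective subcategory, and analogously for $\H_\tau(S)_*\hookrightarrow \P_\Sigma(\Sm_{S+})$. Combining this with the conservativity of $\gamma_{C*}$ at the $\P_\Sigma$-level (Proposition~\ref{prop:formal1}(3)), conservativity of the descended functor is immediate.

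For preservation of sifted colimits, the key point is that both $\H_\tau(\C)$ and $\H_\tau(S)_*$ are closed under sifted colimits in their ambient $\infty$-categories $\P_\Sigma(\C)$ and $\P_\Sigma(\Sm_{S+})$. Sifted colimits in $\P_\Sigma$ are computed pointwise, and $\AA^1$-invariance is preserved by all pointwise colimits of spaces. For the $\tau$-sheaf condition, we use that $\tau$ is at least as fine as the Nisnevich topology and is finitary enough that sifted colimits of $\tau$-sheaves remain $\tau$-sheaves; one can cite \cite{bachmann-hoyois}*{\S2.2} or reduce to the statement that the Nisnevich $\infty$-topos on smooth schemes over a Noetherian base has enough finitely presented points. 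Once closure is established, sifted colimits in $\H_\tau(\C)$ are computed in $\P_\Sigma(\C)$, and the claim reduces to Proposition~\ref{prop:formal1}(2).

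For the $\H_\tau(S)_*$-module structure on $\H_\tau(\C)$: the $\Sm_{S+}$-linear structure on $\gamma_\C$ from Definition~\ref{def:corr-infty}(2), promoted in \S\ref{sect:extend} to a $\P_\Sigma(\Sm_{S+})$-linear structure, equips $\P_\Sigma(\C)$ with an action of the presentably symmetric monoidal $\infty$-category $\P_\Sigma(\Sm_{S+})$ compatible with the left adjoint $\gamma^*_C$. To descend this to an action of $\H_\tau(S)_*$ on $\H_\tau(\C)$, one checks that tensoring with any $\mathscr{F}\in\P_\Sigma(\Sm_{S+})$ preserves $\LL_{\mot,\tau}$-equivalences and sends motivic-local objects to motivic-local objects when $\mathscr{F}$ is itself motivic-local; these follow from Lemmas~\ref{prop:pres} and~\ref{lem:tau} together with the standard compatibility of motivic localization with the Day convolution monoidal structure on $\P_\Sigma(\Sm_{S+})$. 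The main technical obstacle in this outline is the closure of $\H_\tau(\C)$ under sifted colimits, specifically the $\tau$-sheaf half; the remaining reductions are then either formal consequences or coherence checks that one can leverage from the universal property of $\P_\Sigma$ as a free sifted cocompletion.
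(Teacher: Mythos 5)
Your conservativity argument is fine, and your treatment of the module structure is essentially the one the paper intends (it descends from the right lax $\P_{\Sigma}(\Sm_{S+})$-linear structure of \S\ref{sect:extend}, using Lemmas~\ref{prop:pres} and~\ref{lem:tau} to see that the action is compatible with the localization). However, the step you yourself flag as the crux --- preservation of sifted colimits --- rests on a claim that is false: $\H_{\tau}(\C)$ and $\H_{\tau}(S)_*$ are \emph{not} closed under sifted colimits computed in $\P_{\Sigma}(\C)$ and $\P_{\Sigma}(\Sm_{S+})$. Coherence/finitary arguments (``enough finitely presented points'', \cite{bachmann-hoyois}) only give closure under \emph{filtered} colimits; the $\tau$-descent condition is a (finite) limit condition and does not commute with geometric realizations, which are also sifted. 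Concretely, already for $\tau=\Nis$: take the \v{C}ech nerve $C_\bullet$ of a Nisnevich cover $U\to X$ admitting no section over $X$. Each term is representable, hence Nisnevich-local, but the pointwise realization $|C_\bullet|$ satisfies $|C_\bullet|(X)\simeq|\Maps(X,U\times_X\cdots\times_X U)|$, so the map $|C_\bullet|\to X$ is a Nisnevich-local equivalence that is not an equivalence; if $|C_\bullet|$ were local this map would be an equivalence, a contradiction. So sifted colimits in $\H_{\tau}(S)_*$ (and in $\H_{\tau}(\C)$) are \emph{not} computed in the ambient nonabelian derived category, and your reduction to Proposition~\ref{prop:formal1}(2) does not go through as stated.

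The repair is the route the paper takes: sifted colimits in the localized categories are computed by applying $\LL_{\mot,\tau}$ (resp.\ $\LL^{\C}_{\mot,\tau}$) to the pointwise colimit, so after Proposition~\ref{prop:formal1} it suffices to show that $\gamma_{\C*}\colon\P_{\Sigma}(\C)\to\P_{\Sigma}(\Sm_{S+})$ carries $\LL^{\C}_{\mot,\tau}$-equivalences to $\LL_{\mot,\tau}$-equivalences, i.e.\ commutes with the motivic localizations. The generating local equivalences in $\P_{\Sigma}(\C)$ are the images under $\gamma^*_{\C}$ of the $\AA^1$-projections and the $\tau$-sieve inclusions, and the class of maps sent by $\gamma_{\C*}$ to motivic equivalences is closed under $2$-out-of-$3$ and sifted colimits (by Proposition~\ref{prop:formal1}(2)); since $\gamma_{\C*}\gamma^*_{\C}\simeq\gamma_{\C}(-)$ preserves $\LL_{\AA^1}$- and $\LL_{\tau}$-equivalences by Lemma~\ref{prop:pres} and Lemma~\ref{lem:tau} (the latter is exactly where compatibility with $\tau$ enters), one concludes. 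With that in hand, for a sifted diagram $X_\bullet$ in $\H_{\tau}(\C)$ one has
\[
\gamma_{\C*}\bigl(\LL^{\C}_{\mot,\tau}(\colim X_\bullet)\bigr)\simeq \LL_{\mot,\tau}\bigl(\gamma_{\C*}\colim X_\bullet\bigr)\simeq \LL_{\mot,\tau}\bigl(\colim \gamma_{\C*}X_\bullet\bigr),
\]
which is the desired preservation of sifted colimits. Your conservativity and module-structure arguments can then be kept as written.
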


\begin{proof} For the first claim it suffices, after Proposition~\ref{prop:formal1}, to check that
\[\gamma_{\C*}\colon \P_{\Sigma}(\C) \rightarrow \P_{\Sigma}(\Sm_{S+}) \simeq \P_{\Sigma}(\Sm_S)_*
\] 
sends $\LL^{\C}_{\mot,\tau}$-equivalences to $\LL_{\mot,\tau}$-equivalences. This holds by Lemma~\ref{lem:compatible} and Lemma~\ref{prop:pres}. The assertion that $\H_{\tau}(\C)$ is an $\H(S)_*$-module follows from the right lax structure of $\gamma_{C*}$.
\end{proof}

\begin{remark} \label{rem:tau} If $\tau$ is a topology finer than the Nisnevich topology, then the fully faithful functor $\H_{\tau}(S)_* \rightarrow \H(S)_*$ need not preserve colimits. Hence the composite $\H_{\tau}(\C) \rightarrow \H_\tau(S)$ need not preserve colimits.
\end{remark}

From now on, if $\tau = \Nis$, we drop the decoration $\tau$ from $\H_{\tau}(\C)$ and $\H_{\tau}(S)_*$ and so forth.

\subsubsection{}From the above point of view, we see that $\gamma_{C*}$ is very close to preserving all colimits---we need only show that it preserves finite coproducts. The universal way to enforce this is to take commutative monoid objects on both sides with respect to Cartesian monoidal structures. We can do this for $\H_{\tau}(S)_*$ since it has finite products, and $\CMon(\H_{\tau}(\C)^{\times}) \simeq \H_{\tau}(\C)$ since it is preadditive \cite{ggn}*{Proposition 2.3}. We remark that the symmetric monoidal structure on $\P_{\Sigma}(\Sm_{S+})$ given by Day convolution is not Cartesian \footnote{Although the symmetric monoidal structure on $\P_{\Sigma}(\Sm_S)$ given by Day convolution is, and the natural sifted-colimit preserving functor $\P_{\Sigma}(\Sm_S) \rightarrow \P_{\Sigma}(\Sm_{S+})$ is symmetric monoidal.}.

To see this, consider the left adjoint to $\gamma_{C*}$, that is, 
\[
\gamma^*_C\colon  \H_{\tau}(S)_* \rightarrow \H_{\tau}(\C),
\] 
which preserves all small colimits. According to the universal property of $\CMon$ \cite{ggn}*{Corollary 4.9} we obtain an essentially unique functor $\gamma^*_\C\colon  \CMon(\H_{\tau}(S)^{\times}_*) \rightarrow \H_{\tau}(\C)$ since $\H_{\tau}(\C)$ is preadditive by Proposition~\ref{prop:formal1}.1. This functor admits a right adjoint $\gamma_{C*}\colon  \H_{\tau}(\C) \rightarrow \CMon(\H_{\tau}(S)_*^\times)$ which fits into a commutative diagram
\begin{equation} \label{factors}
\xymatrix{
 & \CMon(\H_{\tau}(S)_*^\times) \ar[d] \\
\H_{\tau}(\C) \ar[ur]^{\gamma^*_\C} \ar[r]^{\gamma_{C*}} & \H_{\tau}(S)_*. 
}
\end{equation}
In other words, the functor $\gamma_{C*}$ factors through the forgetful functor $\CMon(\H_{\tau}(S)_*^\times) \rightarrow \H_{\tau}(S)_*.$

\begin{proposition} \label{prop:cmon} Suppose that $\C$ is a correspondence category which is compatible with $\tau$. Then the functor
\[
\gamma_{C*}\colon \H_{\tau}(\C) \rightarrow \CMon(\H_{\tau}(S)^{\times}_*)
\]
preserves all small colimits and is conservative.
\end{proposition}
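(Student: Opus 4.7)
The strategy is to exploit the factorization~\eqref{factors}: writing $\oblv\colon\CMon(\H_\tau(S)_*^\times)\to\H_\tau(S)_*$ for the forgetful functor, we have $\oblv\circ\gamma_{C*}$ equal to the underlying $\gamma_{C*}\colon\H_\tau(\C)\to\H_\tau(S)_*$ of Proposition~\ref{lem:compatible}. Conservativity is then immediate: $\oblv$ is conservative (it detects equivalences of underlying objects), and the composite is conservative by Proposition~\ref{lem:compatible}, so the enhanced functor $\gamma_{C*}\colon\H_\tau(\C)\to\CMon(\H_\tau(S)_*^\times)$ is conservative as well.

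For the colimit claim, I would reduce to the two cases of sifted colimits and of finite coproducts, since any small colimit in the target can be built from these (cf.\ \cite{htt}*{Proposition 5.5.8.15}). Sifted colimits are handled by the standard fact that $\oblv$ creates sifted colimits, which follows from monadicity of $\oblv$ together with the observation that the free commutative monoid monad on a presentable Cartesian $\infty$-category preserves sifted colimits. Combining with the sifted-colimit preservation established in Proposition~\ref{lem:compatible} yields preservation of sifted colimits for the enhanced $\gamma_{C*}$. For finite coproducts, the key observation is that $\CMon(\H_\tau(S)_*^\times)$ is preadditive by \cite{ggn}*{Proposition 2.3}, so finite coproducts there agree with finite products, and both are detected by $\oblv$; since the enhanced $\gamma_{C*}$ is a right adjoint to the lift of $\gamma_\C^*$ produced before diagram~\eqref{factors}, it preserves all finite products, which under this identification yields preservation of finite coproducts.

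The main substantive input is the identification of finite coproducts with finite products in $\CMon$, which is really the whole reason for passing to commutative monoids in the first place, as signalled in the discussion preceding the proposition. The remaining steps amount to combining routine properties of $\oblv$ (conservativity, creation of sifted colimits, preservation of finite products) with the already-established properties of the underlying $\gamma_{C*}$ from Proposition~\ref{lem:compatible}; I do not expect any serious obstacle beyond citing the semiadditivity of $\CMon$ correctly.
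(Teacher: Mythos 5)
Your argument is correct and takes essentially the same route as the paper: factor through the forgetful functor, get conservativity and sifted colimits from Proposition~\ref{lem:compatible} together with the fact that $\CMon(\H_{\tau}(S)^{\times}_*)\to\H_{\tau}(S)_*$ creates (is conservative and preserves) sifted colimits, get finite coproducts from right-adjointness plus preadditivity, and conclude by the sifted-colimits-plus-finite-coproducts criterion (the paper cites \cite{bachmann-hoyois}*{Lemma 2.8} where you cite \cite{htt}). The only point to make explicit is that passing from preservation of finite products to preservation of finite coproducts also uses preadditivity of the \emph{source} $\H_{\tau}(\C)$ (Lemma~\ref{lem:preadd}), not only of $\CMon(\H_{\tau}(S)^{\times}_*)$.
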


\begin{proof} By the diagram~\eqref{factors}, the functor $\gamma_{C*}$ preserves sifted colimits as the horizontal arrow preserves sifted colimits by Proposition~\ref{lem:compatible} and the vertical arrow preserves sifted colimits as a special case of~\cite{ggn}*{Proposition B.4}. Since it is a right adjoint it preserves finite products, but since its domain and codomain are preadditive it preserves finite coproducts as well and we are done by \cite{bachmann-hoyois}*{Lemma 2.8}. The conservativity statement follows from Proposition~\ref{lem:compatible} and the fact that the forgetful functor from commutative monoid objects is conservative.
\end{proof}

\subsubsection{$\mathbb{T}$-stability} Now we introduce a more refined notion than simply $\otimes$-inverting $\mathbb{T}$. This is inspired by the treatement of \cite{sag}*{Appendix C} on prestable $\infty$-categories.
\begin{definition} \label{def:t-prestable} Let $\C$ be an $\H(S)_*$-module in $\Cat_{\infty}.$ Then $\C$ is \emph{$\mathbb{T}$-prestable} if the endofunctor
\begin{equation} \label{tensor-by-t} 
  \mathbb{T} \otimes (-): \C \rightarrow \C
 \end{equation}
 is fully faithful. The $\infty$-category $\C$ is \emph{$\mathbb{T}$-stable} if the endofunctor~\eqref{tensor-by-t} is invertible.
\end{definition}

\begin{remark}The notion of a $\mathbb{T}$-stable $\infty$-category is a familiar one in motivic homotopy theory. In fact, $\mathbb{T}$-prestability is too---it is inspired by \emph{cancellation theorems} in the sense of \cite{voev-cancel} which assert that $\DM^{\eff}(k;\ZZ)$ is a $\mathbb{T}$-prestable for any perfect field $k$. The analogous statement holds for Milnor--Witt motivic cohomology as proved in \cite{cancellation}. The results of \cite{dk} give a framework for cancellation theorems. For the $\infty$-category of framed motivic spaces, cancellation holds as well  \cite{ehksy}*{Theorem 3.5.8}, which in turn relies on the cancellation theorem of Ananyevskiy, Garkusha and Panin \cite{agp}. Moreover, for any base scheme $S$, the subcategory $\SH(S)^{\eff} \subseteq \SH(S)$ of effective motivic spectra is $\mathbb{T}$-prestable.
\end{remark}

\subsubsection{} The thesis of Robalo \cite{robalo} provides a way to invert $\mathbb{T}$ for any $\H(C)_*$-module and obtain a symmetric monoidal stable $\infty$-category---in fact one that is a module over $\SH(S)$. We define the stable $\infty$-category of \emph{$\C$-motives} simply by
\[
\SH_{\tau}(\C) := \H_{\tau}(\C)[ \mathbb{T}^{\otimes-1}],
\] 
with notation as in \cite[Definition 2.6]{robalo}. We then have the basic adjunction
\[
\Sigma^{\infty}_{\mathbb{T}, \C}: \H_{\tau}(\C) \rightleftarrows \SH_{\tau}(\C): \Omega_{\mathbb{T},\C}^{\infty}.
\]
The following summarizes the basic properties of $\SH_{\tau}(\C)$:

\begin{proposition} \label{prop:sh-c} If $\C$ is correspondence category, then
\begin{enumerate}
\item The $\infty$-category $\SH_{\tau}(\C)$ is a stable presentably symmetric monoidal $\infty$-category, and
\item is generated under sifted colimits by objects of the form $\{ \mathbb{T}^{\otimes n} \otimes \Sigma^{\infty}_{\mathbb{T},\C}X \}_{n \in \ZZ, X\in \C}$.
\item The $\infty$-category $\SH_{\tau}(\C)$ is computed as the colimit in $\Mod_{\H(\Sm_S)_*}(\Pr^L)$ of
\begin{equation} \label{eq:colim}
\H_{\tau}(C) \xrightarrow{\mathbb{T}\otimes (-)} \H_{\tau}(\C) \xrightarrow{\mathbb{T}\otimes (-)}  \H_{\tau}(\C)\xrightarrow{\mathbb{T}\otimes (-)} \cdots.
\end{equation}
\item The functor
\[
\gamma_{C*}:\SH_{\tau}(\C) \rightarrow \SH_{\tau}(\Sm_S)
\]
is conservative and preserves colimits.
\end{enumerate}
\end{proposition}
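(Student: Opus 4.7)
The plan is to invoke Robalo's machinery \cite{robalo} for inverting symmetric objects in presentably symmetric monoidal $\infty$-categories, together with the unstable results already established for $\H_\tau(\C)$.

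For (1), I would observe that $\SH_\tau(\C) = \H_\tau(\C)[\mathbb{T}^{\otimes -1}]$ is presentably symmetric monoidal by \cite{robalo}, since $\H_\tau(\C)$ is itself presentably symmetric monoidal and $\H(S)_*$-linear. Stability then follows because $\mathbb{T} \simeq S^1 \otimes \Gm$ in $\H(S)_*$: once $\mathbb{T}$ is invertible in $\SH_\tau(\C)$, the tensor factor $S^1$ is itself invertible, so $\Sigma = S^1 \otimes (-)$ is an autoequivalence.

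For (3), the key input is that $\mathbb{T}$ is a \emph{symmetric object} in the sense of \cite{robalo}: the cyclic permutation on $\mathbb{T}^{\otimes 3}$ is homotopic to the identity, which is classical (due to Voevodsky, since $\mathbb{T}\simeq\PP^1$). Robalo's main theorem then identifies $\SH_\tau(\C)$ with the sequential colimit~\eqref{eq:colim} in $\Mod_{\H(S)_*}(\Pr^L)$. Part (2) is a formal consequence of (3): objects in such a colimit are generated under colimits by images of generators at each stage. Since $\H_\tau(\C)$ is generated under sifted colimits by the objects $\gamma^*_\C(X_+)$ for $X \in \Sm_S$ (equivalently, by $\gamma_\C(X)$ for $X \in \C$, using essential surjectivity of $\gamma_\C$), the stabilization is generated under sifted colimits by $\mathbb{T}^{\otimes n} \otimes \Sigma^\infty_{\mathbb{T},\C} X$ for $n \in \ZZ$ and $X \in \C$; the invertibility of $\mathbb{T}$ allows negative $n$.

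The main obstacle is (4). For colimit preservation of $\gamma_{C*}$, I would exploit the description in (3): the $\H(S)_*$-linear adjunction~(\ref{gamma-adjunction}) induces a compatible adjunction at each stage of the sequential colimit, and passing to the colimit in $\Pr^L$ corresponds dually to passing to the limit in $\Pr^R$ of the right adjoints. Since each unstable $\gamma_{C*}$ preserves colimits by Proposition~\ref{prop:cmon} and the transition functors on the $\Pr^R$ side are the colimit-preserving $\Omega_{\mathbb{T}}$, the limit functor $\gamma_{C*}$ on $\SH_\tau(\C)$ preserves all small colimits. Conservativity follows from the same presentation: an object $E$ with $\gamma_{C*}(E)\simeq 0$ has each $\mathbb{T}$-loop level $\Omega^{\infty-n}_{\mathbb{T}} E$ mapped to zero by the unstable $\gamma_{C*}$, so by the unstable conservativity of Proposition~\ref{prop:cmon} each level vanishes and hence $E \simeq 0$.
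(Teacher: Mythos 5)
Your treatment of (1)--(3) is essentially the paper's: presentable symmetric monoidality and the colimit presentation~\eqref{eq:colim} come from Robalo's inversion of the symmetric object $\mathbb{T}$ (the paper cites \cite{robalo}*{Corollary 2.22}), stability from $\mathbb{T}\simeq S^1\otimes\Gm$, and (2) from (3) together with the sifted-colimit generation of $\H_{\tau}(\C)$ by representables (the paper invokes \cite{htt}*{Lemma 6.3.3.7} and the argument of \cite{adeel}*{Proposition 2.2.9}); the paper then disposes of (4) by citing Proposition~\ref{prop:cmon}, which is also your intended input.

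However, your argument for colimit preservation in (4) has a genuine gap. You pass to the limit of right adjoints and claim that ``the transition functors on the $\Pr^R$ side are the colimit-preserving $\Omega_{\mathbb{T}}$,'' so that colimits in $\SH_\tau(\C)\simeq\lim\bigl(\cdots\xrightarrow{\Omega_{\mathbb{T}}}\H_\tau(\C)\xrightarrow{\Omega_{\mathbb{T}}}\H_\tau(\C)\bigr)$ are computed levelwise. But $\H_\tau(\C)$ is only preadditive, not stable, and the unstable functor $\Omega_{\mathbb{T}}=\Omega_{\Gm}\Omega_{S^1}$ does not preserve general colimits (already $\Omega_{S^1}$ fails to preserve pushouts and geometric realizations in pointed, or even prestable, settings); at best it preserves filtered colimits (using compactness of $\mathbb{T}$) and finite products. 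Consequently, general colimits in the limit category are \emph{not} computed levelwise, and your deduction breaks down as stated. The standard repair, which is what the reduction to Proposition~\ref{prop:cmon} implicitly uses, is a two-step argument mirroring that proposition: filtered (or sifted) colimits \emph{are} computed levelwise in the limit presentation, and the levelwise functor $\gamma_{C*}$ preserves them by Proposition~\ref{prop:cmon} (here one uses $\gamma_{C*}\Omega_{\mathbb{T}}\simeq\Omega_{\mathbb{T}}\gamma_{C*}$, which holds because the left adjoints $\mathbb{T}\otimes\gamma_C^*(-)\simeq\gamma_C^*(\mathbb{T}\otimes-)$ agree by $\H(S)_*$-linearity of $\gamma_C^*$); finite colimits then come for free because both $\SH_\tau(\C)$ and $\SH_\tau(\Sm_S)$ are stable and $\gamma_{C*}$, being a right adjoint, is left exact, hence exact. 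Together these give preservation of all small colimits. Your conservativity argument is fine once the same commutation $\gamma_{C*}\Omega_{\mathbb{T}}\simeq\Omega_{\mathbb{T}}\gamma_{C*}$ is made explicit, since equivalences in a limit of $\infty$-categories are detected levelwise.
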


\begin{proof} Stability follows from the standard equivalence $\mathbb{T} \simeq \GG_m \otimes S^1$ in $\SH(S)$, which remains true for modules over $\SH(S)$. The second assertion follows from the third via \cite{htt}*{Lemma 6.3.3.7} and the fact that $\H_{\tau}(\C)$ is generated under sifted colimits by representables which are smooth affine by the argument of \cite{adeel}*{Proposition 2.2.9} (which works for any topology $\tau$ finer than $\Nis$), while the third comes from \cite{robalo}*{Corollary 2.22}. The last assertion follows from Proposition~\ref{prop:cmon}.
\end{proof}

\subsubsection{}The last part of Proposition~\ref{prop:sh-c} is the main point of our axiomatization: the adjunction $\SH_{\tau}(S) \rightleftarrows \SH_{\tau}(\C)$ is monadic. In particular, if $\tau= \Nis$, then $\SH(S) \rightleftarrows \SH(\C)$ is monadic.

\subsubsection{From categories of correspondences to motivic module categories}\label{ad} 
Suppose that we have a functor 
\[
\C: \scrS^{\op} \rightarrow \CorrCat^{\otimes}
\]
which carries a morphism of schemes $f\colon T \rightarrow S$ to $f^*\colon \C_S \rightarrow \C_T$.
By naturality of the preceding constructions \footnote{The most nontrivial of which is the universal property of $\mathbb{T}$-stabilization for which we can appeal to \cite{bachmann-hoyois}*{Lemma 4.1}.}  we obtain a functor
\[
\SH_{\tau} \circ \C\colon \scrS^{\op} \rightarrow \Pr^{L,\otimes}_{\stab}
\]
equipped with a transformation $\SH|_{\scrS} \rightarrow \SH_{\tau} \circ \C$. We impose an additional assumption on $\C$, inspired by \cite{CDbook}*{Lemma 9.3.7}:
\begin{itemize}
\item For each $p\colon T \rightarrow S$, a smooth morphism in $\scrS$, the functor $p^*$ admits a left adjoint $p_{\#}$ such that the transformation $p_{\#}\gamma_{\C_T} \rightarrow \gamma_{\C_S}p_{\#}$ is an equivalence.
\end{itemize}
In this case, we say that $\C$ is \emph{adequate}. 

\subsubsection{} We employ the following additional notation: if $L\colon \SH(S) \rightarrow \SH(S)$ is a localization, denote by $L(\SH_{\tau}(\C_S))$ the subcategory of $\SH_{\tau}(\C_S)$ spanned by objects $X$ such that $\gamma_{C*}X$ is $L$-local. Since $\gamma_{C*}$ preserves limits, the inclusion $L(\SH_{\tau}(\C_S)) \hookrightarrow \SH_{\tau}(\C_S)$ is closed under limits and there is a localization functor (by the adjoint functor theorem) 
\[
L_{\C_S}\colon \SH_{\tau}(\C_S) \rightarrow L(\SH_{\tau}(\C_S))
\] 
rendering the following diagram commutative (since their right adjoints commute):
\[
\xymatrix{
\SH(S) \ar[d]_{L} \ar[r]^{\gamma_{\C_S}^*}  & \SH_{\tau}(\C_S) \ar[d]^{L_{\C_S}}\\
L(\SH(S)) \ar[r]^{\gamma_{\C_S}^*} & L(\SH_{\tau}(\C_S)).
}
\]
\begin{proposition} \label{prop:sh-c} If $\C\colon  \scrS^{\op} \rightarrow \CorrCat^{\otimes}$ is adequate, then the following hold:
\begin{enumerate}
\item We have premotivic adjunctions $\SH|_{\scrS} \rightleftarrows \SH_{\tau} \circ C$.
\item If $L$ is a smashing and symmetric monoidal localization of $\SH|_{\scrS}$, then we have a premotivic adjunction $L(\SH)|_{\scrS} \leftrightarrows L(\SH \circ \C)$.
\item If $\tau$ is a topology such that for each $S \in \scrS$, the functor $L(\SH_{\tau}(S)) \rightarrow L(\SH(S))$ preserves sifted colimits, then the premotivic adjunction $L(\SH)|_{\scrS} \rightleftarrows L(\SH_{\tau} \circ \C)$ is a motivic module category (in particular, this holds, when $\tau = \Nis$).
\end{enumerate}
\end{proposition}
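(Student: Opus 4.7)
The plan is to establish the three claims in sequence, noting that the functoriality of $\SH_\tau\circ\C\colon\scrS^\op\to\Pr^{L,\otimes}_{\stab}$ equipped with a transformation from $\SH|_\scrS$ is already set up in the paragraph preceding the statement. What remains in each case is to verify the premotivic compatibility and, for (3), the module category conditions.

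For claim (1), the task is to check that for any smooth $p\colon T\to S$, the natural exchange transformation $p_\#\gamma^*_{\C_T}\to\gamma^*_{\C_S}p_\#$ is an equivalence. Adequacy furnishes the analogous equivalence $p_\#\gamma_{\C_T}\simeq\gamma_{\C_S}p_\#$ at the level of the discrete correspondence categories, so the problem reduces to propagating this through $\P_\Sigma$, the $\tau$-sheafification, the $\AA^1$-localization, and finally the $\mathbb{T}$-stabilization. Each intermediate construction is functorial in $\Pr^L$, and the left adjoints we are tracking are computed as sifted-colimit extensions that commute with these procedures. The subtle step is the last: for $\mathbb{T}$-stabilization we appeal to the universal property referenced above via \cite{bachmann-hoyois}*{Lemma 4.1}, which guarantees that an exchange equivalence between $\mathbb{T}$-prestable presentable module categories upgrades to the $\mathbb{T}$-inverted ones.

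For claim (2), the $L$-local subcategory $L(\SH_\tau(\C_S))$ is defined in the excerpt so that $\gamma_{\C_S,*}$ restricts to $L$-local objects. Since $L$ is smashing, the localization commutes with any colimit-preserving left adjoint (in particular with $p_\#$ and with tensor products), because smashing localization amounts to tensoring with an idempotent $\mathcal{E}_\infty$-algebra $\ell_S\in\SH(S)$. Combined with the colimit-preservation of $\gamma_{\C_S,*}$ from Proposition~\ref{prop:sh-c}(4), this implies that the localization functor $L_{\C_S}$ interacts well with the premotivic structure produced in (1): the exchange equivalence $p_\#\gamma^*_{\C_T}\simeq\gamma^*_{\C_S}p_\#$ restricts to the $L$-local subcategories, yielding the desired premotivic adjunction $L(\SH)|_\scrS\rightleftarrows L(\SH\circ\C)$.

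For claim (3), we verify the motivic module category conditions of Definition~\ref{def:mmc}: the left adjoint is symmetric monoidal by construction, so what remains is conservativity and preservation of sifted colimits for the right adjoint. The right adjoint $L(\SH_\tau(\C_S))\to L(\SH(S))$ factors as
\[
L(\SH_\tau(\C_S))\xrightarrow{\gamma_{\C_S,*}}L(\SH_\tau(S))\hookrightarrow L(\SH(S)).
\]
The first functor is conservative and preserves all colimits by Proposition~\ref{prop:sh-c}(4) applied in the $L$-local setting (both sides being closed under colimits and limits in their ambient categories). The inclusion is fully faithful hence conservative, and preserves sifted colimits precisely by the hypothesis in (3). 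When $\tau=\Nis$ the inclusion is the identity, so the hypothesis is automatic; otherwise it is genuinely required in view of Remark~\ref{rem:tau}. The main obstacle throughout is claim (1), specifically the $\mathbb{T}$-stabilization step, since the exchange equivalence must be transported through a colimit in $\Pr^L$; once this is done, claims (2) and (3) follow essentially formally from the setup.
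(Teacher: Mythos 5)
Your proposal is correct and follows essentially the same route as the paper: adequacy is propagated through the unstable localizations and the $\mathbb{T}$-stabilization to obtain the premotivic adjunction in (1), and (2)--(3) are then formal consequences of the definition of motivic module categories together with the conservativity and colimit-preservation of $\gamma_{C*}$ at the stable level. The only difference of emphasis is that the paper locates the nontrivial verification in (1) in the commutation $L_{\tau}\gamma_{C*}\simeq\gamma_{C*}L_{\tau}$ at the unstable level, supplied by Lemma~\ref{lem:tau} via compatibility of $\C$ with $\tau$, rather than in the $\mathbb{T}$-stabilization step, which it handles by the same appeal to the universal property that you cite.
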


\begin{proof} The proof of (1) follows as in the case of Grothendieck abelian categories \cite{CDbook}*{Corollary 10.3.11} and Voevodsky's $\C = \Cor$ (in the sense of \cite{CDbook}*{\S 9}) we give only the main points. Since $\C$ is adequate, we get that the equivalence $p_{\#}\gamma_{\C_T} \rightarrow \gamma_{\C_S}p_{\#}$ persists on the level of $\mathbb{T}$-stabilizations. What we need to verify, just as in \cite{CDbook}*{Proposition 10.3.9} is that the transformation $L_{\tau}\gamma_{C*} \simeq \gamma_{C*}L_{\tau}$ is an equivalence on the unstable level, i.e., the ``forgetful functor $\H_{\tau} \circ C \rightarrow \H|_{\scrS}$ preserves $\tau$-local objects and this is given by Lemma~\ref{lem:tau} under the standing assumption that $\C$ is compatible with $\tau$. The next two statements are then immediate from the definition of motivic module categories and the last statement of Proposition~\ref{prop:sh-c}.
\end{proof}

\subsection{Examples} \label{sec:ex} We now discuss examples of the above constructions and results.

\begin{example} \label{exmp0} Consider a Cartesian section of $\SH \rightarrow \scrS$, taking values in motivic $\mathcal{E}_{\infty}$-ring spectra. Then the transformation $\E \otimes(-)\colon  \SH|_{\scrS} \rightarrow \Mod_\E$ furnishes the first examples of motivic module categories. We can also consider further localizations of the premotivic category $\Mod_{\E}$, such as in \cite{elso} where $\scrS = \Sch_{\mathbb{Z}\left[\frac{1}{\ell}\right]}$ the localization functor is given by the composite of $\ell$-completion and \'etale localization, and $\E$ is $\MGL$; see \emph{loc. cit.} for more details where results in this paper is used to describe the $\infty$-category of modules over \'etale cobordism.
\end{example}

\begin{example} \label{exmp1:loc} Consider a localization $L\colon \SH|_{\scrS} \rightarrow L(\SH|_{\scrS})$. Then, if $L$ is smashing, $L(\SH|_{\scrS})$ is a motivic module category. Examples of these smashing localizations are given by \emph{periodization of elements}; we refer the reader to \cite{hoyois-cdh}*{Section 3} for an extensive discussion in our context. For example, a theorem of Bachmann \cite{ret} proves that periodizing the element $\rho$ yields real \'etale localization. If $x\colon \Sigma^{p,q}\sspt \rightarrow \sspt$, then the results of \cite{hoyois-cdh}*{\S 3} (or apply \cite{ret}*{Lemma 15}) tells us that $\sspt[x^{-1}]$ is an $\mathcal{E}_{\infty}$-ring and the projection formula holds. Thus, the category of $x$-periodic motivic spectra are modules over $\sspt[x^{-1}]$. 
\end{example}

\begin{example} \label{exmp2:tr} The basic example of a category of correspondences is Voevodsky's category of correspondences $\Cor_S$ in the sense of \cite{mvw}*{Appendix 1A} \cite{CDbook}*{\S 9};  this is defined for any Noetherian scheme $S$ \cite{CDbook}*{\S 9.1}. When $S$ is essentially smooth over a base field, the category Milnor--Witt correspondences $\widetilde{\Cor}_S$ of Calm\`es and Fasel \cite{calmes-fasel} is defined and is also a category of correspondences. Over a field (where both categories are defined), these categories are generalized by Garkusha's axioms in \cite{garkusha}. When defined, these categories are adequate in the sense of \S\ref{ad}. All these are examples of categories of correspondences, and thus gives rise motivic module categories. 
\end{example}

\begin{example}
Let $k$ be a perfect field. Given any $S\in\Sm_k$ and any good cohomology theory $A$ on $\Sm_S$ in the sense of \cite[§2]{dk}, then \cite[§3]{dk} defines an adequate category of correspondences $\Cor^A_S$ on $\Sm_S$. 
\end{example}

\begin{example} \label{exmp:fr} The $\infty$-category of framed correspondences of \cite{ehksy} is another example of a category of correspondences and is defined for any qcqs scheme $S$. The main theorem of \cite{hoyois-loc} asserts that the corresponding motivic module category is equivalent to $\SH(S)$, relying on the ``recognition principle" of \cite{ehksy}.
\end{example}

\begin{example}\label{ex:cor-e} If $\E \in \SH(S)$ is a homotopy associative ring spectrum, then \cite{ehksy2} defines a $\mathrm{h}\Spc$-enriched category $\mathrm{h}\Cor_S^\E$ of \emph{finite $\E$-correspondences}, which the authors expect to be the homotopy category of an $\infty$-category $\Cor_S^\E$ whenever $\E$ is actually an $\mathcal{A}_{\infty}$-ring. Setting $\C = \Cor_S^\E$, the $\infty$-category $\SH(\C)$ in this paper corresponds to $\DM^\E(S)$ in \emph{loc. cit}. We will discuss this example again in the next section.
\end{example}


\section{Module categories over regular schemes}\label{sec:regk}
 
In this section we show that the hypotheses of Theorem~\ref{thm:main} are satisfied for module categories over a field $k$, and more generally for module categories over regular $k$-schemes.

\subsection{The case of fields} We now aim to investigate when the hypotheses of Theorem~\ref{thm:main} are satisfied. One way to ensure that the projection formula holds is to use the following computation to reduce to the case of compact-rigid generation:

\begin{lemma} \label{dual} 
Suppose that we have an adjunction of symmetric monoidal $\infty$-categories 
\[
F: \C \rightleftarrows \D: G,
\] such that $F$ is strongly symmetric monoidal. Let $1\in\C$ denote the unit object of $\C$. If $\E\in\C$ is a strongly dualizable object, then the map $c\colon GF(1)\otimes\E\to GF(\E)$ is an equivalence.
 \end{lemma}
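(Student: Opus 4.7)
The plan is to reduce to a Yoneda/dualizability argument. Since $F$ is strongly symmetric monoidal, it preserves the unit, so $F(1)\simeq 1$, and it preserves strongly dualizable objects; in particular $F\E$ is strongly dualizable in $\D$ with dual $F(\E^{\vee})$. Unwinding Lemma~\ref{lem:barrbeck2}, the map $c$ evaluated at $\E$ is the composite
\[
GF(1)\otimes \E \xrightarrow{\id\otimes\epsilon_\E} GF(1)\otimes GF\E \xrightarrow{\mu_{1,\E}} G(F(1)\otimes F\E) \simeq GF\E,
\]
where $\epsilon$ is the unit of $(F,G)$, $\mu$ is the lax symmetric monoidal structure on $G$ obtained as the mate of the strong one on $F$, and the final equivalence uses $F(1)\simeq 1$.

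Next, I would test $c_\E$ against an arbitrary $X\in\C$ via the Yoneda lemma. Using the dualizability of $\E$ in $\C$ and of $F\E$ in $\D$ together with the adjunction $(F,G)$ and $F(1)\simeq 1$, one obtains a natural chain of equivalences
\begin{align*}
\Maps_\C(X,\, GF(1)\otimes\E) &\simeq \Maps_\C(X\otimes\E^{\vee},\, GF(1)) \\
&\simeq \Maps_\D(FX\otimes F\E^{\vee},\, 1_\D) \\
&\simeq \Maps_\D(FX,\, F\E) \\
&\simeq \Maps_\C(X,\, GF\E),
\end{align*}
the first and third steps using duality ($-\otimes\E^{\vee}$ left adjoint to $-\otimes\E$, and $-\otimes F\E^{\vee}$ left adjoint to $-\otimes F\E$), while the middle ones use the $(F,G)$-adjunction and the strong monoidality of $F$.

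Finally, one must verify that this Yoneda-derived equivalence is induced by postcomposition with $c_\E$. Equivalently, I would construct an explicit inverse $d_\E\colon GF\E\to GF(1)\otimes\E$ built from $\mathrm{coev}_\E$, the adjunction unit $\epsilon_{\E^{\vee}}$, the lax structure $\mu$, and $GF(\mathrm{ev}_\E)$, and check $c_\E\circ d_\E\simeq\id$ and $d_\E\circ c_\E\simeq\id$ via the triangle identities for the duality $(\E,\E^{\vee})$ together with compatibility of $F$ with $\mathrm{ev}$ and $\mathrm{coev}$. I expect this coherence check to be the main obstacle: at the level of the homotopy category it is routine, but in the $\infty$-categorical setting one must carefully track the higher witnesses at each step; the Yoneda route above bypasses the need to write out the diagram chase by hand.
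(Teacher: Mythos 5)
Your proposal is essentially the paper's own proof: the paper verifies the claim by exactly the same Yoneda-style chain of natural equivalences $\Maps_\C(\E',GF(1)\otimes\E)\simeq\Maps_\C(\E'\otimes\E^\vee,GF(1))\simeq\Maps_\D(F(\E')\otimes F(\E)^\vee,F(1))\simeq\Maps_\D(F(\E'),F(\E))\simeq\Maps_\C(\E',GF(\E))$, using dualizability, strong monoidality of $F$, and the adjunction. Your closing remark that one should check this chain is actually induced by postcomposition with $c$ is a fair point of care that the paper leaves implicit, but it does not change the method.
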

 
 \begin{proof} This follows from a standard computation: let $\E'\in\C$ be arbitrary, then we have a string of equivalences
\begin{align*}
\Maps_\C(\E',GF(1)\otimes\E)&\simeq \Maps_\C(\E'\otimes\E^\vee,GF(1))\\
&\simeq \Maps_\D(F(\E'\otimes\E^\vee),F(1))\\
&\simeq \Maps_\D(F(\E')\otimes F(\E)^\vee,F(1))\\
&\simeq \Maps_\D(F(\E'),F(\E))\\
&\simeq \Maps_\C(\E',GF(\E)),
\end{align*}
which shows the claim.
\end{proof}

\subsubsection{}Thus, if $\SH(S)$ is generated by strongly dualizable objects, it follows that the projection formula holds:
 
\begin{theorem} \label{thm:fieldcase}
Let $k$ be a field. Suppose that $\ell$ is a prime which is coprime to the exponential characteristic $e$ of $k$ and $\scrM$ is a motivic module category on $k$. Then we have the following equivalences of presentably symmetric monoidal stable $\infty$-categories:
\[
L_{(\ell)}(\scrM(k))\simeq\Mod_{L_{(\ell)}\gamma_*\gamma^*(\sspt_S)}(\SH(k)),\]
and
\[
\scrM(k)\left[\tfrac{1}{e}\right] \simeq\Mod_{\gamma_*\gamma^*(\sspt)\left[\frac{1}{e}\right]}(\SH(k)).
\]
\end{theorem}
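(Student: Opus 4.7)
The plan is to reduce both equivalences to Theorem~\ref{thm:main} by exhibiting the appropriate localizations of $\scrM$ as $L$-local motivic module categories for which the projection formula holds. The crucial inputs are Lemma~\ref{dual}, which establishes the projection formula on strongly dualizable objects, and the compact-rigid generation of Example~\ref{exmp-field}, which propagates this equivalence from dualizable objects to the entire category. I will focus on the $\ell$-local statement; the $\left[\tfrac{1}{e}\right]$-version follows from the identical argument, invoking the $\left[\tfrac{1}{e}\right]$-variant of compact-rigid generation.

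First I would verify that the $\ell$-localization fits into an $L_{(\ell)}$-local motivic module category. Setting $L = L_{(\ell)}$ and $L\scrM(k) := \Mod_{\gamma^{*}(L\sspt_k)}(\scrM(k))$, the symmetric monoidal smashing localization on $\SH(k)$ and the induced one on $\scrM(k)$ yield an adjunction $\gamma^{*} \colon L\SH(k) \rightleftarrows L\scrM(k) \colon \gamma_{*}$ with $\gamma^{*}$ still symmetric monoidal. Conservativity of $\gamma_{*}$ passes to the localization because vanishing in $L\SH(k) \subseteq \SH(k)$ forces vanishing in $\SH(k)$; sifted-colimit preservation passes because sifted colimits of $\ell$-local objects remain $\ell$-local. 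This verifies Definition~\ref{def:mmc} at the localized level.

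The heart of the argument is the projection formula $c \colon L\gamma_{*}\gamma^{*}(\sspt_k) \otimes (-) \to L\gamma_{*}\gamma^{*}(-)$ as functors on $L\SH(k)$. Both sides preserve all colimits: $\gamma_{*}$ is a right adjoint between stable presentable $\infty$-categories preserving sifted colimits, which combined with exactness gives preservation of all colimits. By Lemma~\ref{dual}, $c$ is an equivalence on every strongly dualizable object. By Example~\ref{exmp-field}, $L\SH(k)$ has compact-rigid generation, so the strongly dualizable objects generate under colimits. A natural transformation of colimit-preserving functors that is an equivalence on a generating family is an equivalence, so $c$ is an equivalence. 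Theorem~\ref{thm:main} then supplies the desired equivalence $L\scrM(k) \simeq \Mod_{L\gamma_{*}\gamma^{*}(\sspt_k)}(L\SH(k))$; the identification with $\Mod_{L\gamma_{*}\gamma^{*}(\sspt_k)}(\SH(k))$ is automatic since modules over an $\ell$-local $\mathcal{E}_{\infty}$-ring are themselves $\ell$-local.

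The main substantive input is the compact-rigid generation afforded by Example~\ref{exmp-field}; the hypothesis that $\ell$ be coprime to the exponential characteristic $e$ enters only here, and without it there would be no way to promote the clean projection formula on dualizable objects (Lemma~\ref{dual}) to the full statement. The remaining delicate point is the book-keeping that the $\ell$-localization is genuinely a motivic module category in the sense of Definition~\ref{def:mmc}, particularly the persistence of sifted-colimit preservation after localization, but this is elementary once the smashing character of $L_{(\ell)}$ is used.
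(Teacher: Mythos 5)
Your proposal is correct and follows essentially the same route as the paper: reduce via Theorem~\ref{thm:main} to the projection formula, establish it on strongly dualizable objects by Lemma~\ref{dual}, and extend to all of $\SH(k)_{(\ell)}$ (resp.\ $\SH(k)\left[\tfrac1e\right]$) using the compact-rigid generation of Example~\ref{exmp-field} together with the fact that both sides of the comparison map preserve the relevant colimits. The only difference is that you spell out the bookkeeping showing the $\ell$-localized adjunction is again an $L$-local motivic module category, a point the paper leaves implicit.
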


\begin{proof} After Theorem~\ref{thm:main}, we need to verify the appropriate projection formulas. By assumption, the functor $\gamma_*$ preserves sifted colimits and thus the functors $\gamma_*\gamma^*(\sspt_S) \otimes (-)$ and $\gamma_*\gamma^*(-)$ do as well. Now Lemma~\ref{dual} tells us that the projection formula holds for strongly dualizable objects in $\SH(k)_{(\ell)}$. Thus we will be done if we can prove that the second inclusion of~\eqref{riou-inclusions}, $\SH^{\rig}(k)_{(\ell)} \subseteq \SH^{\omega}(k)_{(\ell)}$, is an equality. This amounts to showing that $\SH(k)_{(\ell)}$ is in fact generated by sifted colimits by strongly dualizable objects. This follows by Example~\ref{exmp-field}, which also verifies the theorem for the $e$-inverted case.
\end{proof}

\subsubsection{} We now obtain the following extension of \cite{ro}*{Theorem 1}, \cite{hko}*{Theorem 5.8}, \cite{garkusha}*{Theorem 5.3}, \cite{bachmann-fasel}*{Lemma 5.3}:

\begin{corollary} \label{thm:mz-mz-tild} Let $k$ be a field of exponential characteristic $e$ and let $\gamma_{\C}\colon \Sm_k \rightarrow \C$ be a correspondence category. Then there is an equivalence of presentably symmetric monoidal stable $\infty$-categories
\[
\SH(\C)\left[\tfrac{1}{e}\right] \simeq\Mod_{\gamma_{C*}\gamma^*_C(\sspt)\left[\tfrac{1}{e}\right]}(\SH(k)).
\]
\end{corollary}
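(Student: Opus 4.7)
The plan is to apply Theorem~\ref{thm:fieldcase} directly with $\scrM(k) := \SH(\C)$. Thus the task reduces to verifying that the canonical adjunction
\[
\gamma^*_\C : \SH(k) \rightleftarrows \SH(\C) : \gamma_{\C*}
\]
produced by the constructions of Section~\ref{sec:corr} exhibits $\SH(\C)$ as a motivic module category on $k$ in the sense of Definition~\ref{def:mmc}, and then to invoke the main field-case result to conclude.

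The verification of the axioms is essentially encapsulated in Proposition~\ref{prop:sh-c}: that result shows $\SH(\C)$ is a presentably symmetric monoidal stable $\infty$-category, and that $\gamma_{\C*}\colon \SH(\C)\to\SH(k)$ is conservative and preserves all small colimits (hence in particular sifted colimits). The left adjoint $\gamma^*_\C$ is strongly symmetric monoidal because it is built from a symmetric monoidal graph functor $\gamma_\C\colon \Sm_{k+}\to\C$ (Remark~\ref{rem:smc}) by successively passing to $\P_\Sigma$, Nisnevich sheafification, $\AA^1$-localization, and $\mathbb{T}$-stabilization, each of which is a symmetric monoidal Bousfield localization in $\Pr^{L,\otimes}_{\stab}$. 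Therefore all the axioms of an $L$-local motivic module category (with $L = \id$) are satisfied.

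Once this is in place, Theorem~\ref{thm:fieldcase} supplies the desired equivalence
\[
\SH(\C)\left[\tfrac{1}{e}\right] \simeq \Mod_{\gamma_{\C*}\gamma^*_\C(\sspt)\left[\tfrac{1}{e}\right]}(\SH(k)).
\]
I do not anticipate any substantive obstacle: the nontrivial input has already been absorbed into Theorem~\ref{thm:fieldcase}, whose engine is the compact-rigid generation of $\SH(k)\left[\tfrac{1}{e}\right]$ recorded in Example~\ref{exmp-field}, which in turn promotes the strongly-dualizable projection formula of Lemma~\ref{dual} to a full projection formula. In short, the corollary is the combination of the abstract structural result that correspondence categories give rise to motivic module categories with the main theorem of this section applied to that specific input.
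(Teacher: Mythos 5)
Your proposal is correct and is exactly the paper's (implicit) argument: the corollary is stated as an immediate consequence of Theorem~\ref{thm:fieldcase}, with the motivic-module-category axioms for $\SH(\C)$ supplied by Proposition~\ref{prop:sh-c} (stability, presentable symmetric monoidality, and conservativity plus colimit-preservation of $\gamma_{\C*}$) and the symmetric monoidality of $\gamma^*_\C$ coming from the monoidal graph functor as in Remark~\ref{rem:smc}. No substantive difference from the paper's route.
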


\subsection{The case $\scrS=\Reg_k$}
Following \cite{equichar}, we can extend the previous result to the category $\Reg_k$ of finite dimensional Noetherian schemes that are regular over a field, provided that we impose some additional assumptions on $\scrM$. For the rest of this section, we will therefore assume that $\scrM$ is a motivic module category which in addition satisfies the following property:

\begin{itemize}
\item The premotivic category $\scrM$ satisfies localization (Definition~\ref{def:loci}) and continuity (Definition~\ref{def:cont-loc}).
\end{itemize}

\begin{lemma} \label{lem:t-vs-pull} Suppose that $f\colon T \rightarrow S$ is a morphism in $\Reg_k$. In the following cases, the transformation
\[
f^*\gamma_* \rightarrow \gamma_*f^*
\]
is an equivalence:
\begin{enumerate}
\item The map $f$ is an inverse limit
\[
f = \varprojlim_\alpha f_{\alpha} T_{\alpha} \rightarrow S,
\]
where the transition maps $f_{\alpha \beta}\colon T_{\alpha} \rightarrow T_{\beta}$ are dominant, affine and smooth.
\item The map $f$ is a closed immersion and
\[
S \simeq \varprojlim_\alpha S_{\alpha},
\]
where each $S_{\alpha}$ is a smooth, separated $k$-scheme of finite type with flat affine transition maps.
\end{enumerate}
\end{lemma}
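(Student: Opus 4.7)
The proof hinges on a single basic fact: whenever $p$ is a smooth morphism in $\scrS$, the canonical transformation $p^*\gamma_* \to \gamma_* p^*$ is an equivalence. Indeed, this is obtained by passing to right adjoints in the premotivic equivalence $p_\#\gamma^*_S \simeq \gamma^*_T p_\#$ provided by the definition of the premotivic adjunction $\gamma^*$. Both cases will reduce, in different ways, to this observation.

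For case (1), my first step would be to note that both composites $f^*\gamma_*$ and $\gamma_* f^*$ preserve sifted colimits: $f^*$ because it is a left adjoint, and $\gamma_*$ by the definition of a motivic module category. Since $L(\SH(S))$ is generated under sifted colimits by its $c$-constructible objects, it suffices to verify the equivalence on such objects. Invoking the continuity hypothesis for both $\scrM$ and $L(\SH)$ (the latter being standard) yields equivalences $\scrM_c(T) \simeq \varprojlim_\alpha \scrM_c(T_\alpha)$ and similarly for $L(\SH)_c$. Since each transition map $f_{\alpha\beta}$ is smooth, the basic fact gives $f_{\alpha\beta}^*\gamma_* \simeq \gamma_* f_{\alpha\beta}^*$; assembling these equivalences in the limit over $\alpha$ produces the desired equivalence for $f$ itself on $c$-constructible objects, and hence on all of $\scrM(S)$.

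For case (2), let $j\colon U \hookrightarrow S$ denote the open complement of $i$. Since $j$ is smooth, we already have $j^*\gamma_* \simeq \gamma_* j^*$. Applying $\gamma_*$ (which preserves cofiber sequences, being a right adjoint between stable $\infty$-categories) to the $(\Loc_i)$ cofiber sequence $j_\# j^* X \to X \to i_* i^* X$, and using the equivalence $\gamma_* i_* \simeq i_* \gamma_*$ coming from naturality of $\gamma^*$ with respect to pullback, produces a cofiber sequence in $L(\SH(S))$ whose cofiber term is $i_*\gamma_* i^* X$. Applying $i^*$ and using the identification $i^*i_* \simeq \id$, the equivalence $i^*\gamma_* X \simeq \gamma_* i^* X$ is reduced to the vanishing $i^*\gamma_* j_\# Y \simeq 0$ for all $Y \in \scrM(U)$, equivalently to the smooth base-change compatibility $\gamma_* j_\# \simeq j_\# \gamma_*$. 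The inverse-limit hypothesis $S \simeq \varprojlim S_\alpha$ enters here: writing $i$ as a limit of closed immersions $i_\alpha\colon T_\alpha \hookrightarrow S_\alpha$ and applying continuity reduces the problem to the case where the base $S$ is smooth and of finite type over $k$, where the vanishing can be verified on generators of the form $Y = \gamma^*W$ using the premotivic identity $j_\#\gamma^* \simeq \gamma^* j_\#$ combined with $i^*j_\# = 0$, and then extended to all $Y\in\scrM(U)$ by sifted colimit-preservation.

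The main obstacle is the smooth base-change compatibility $\gamma_* j_\# \simeq j_\# \gamma_*$ required in case (2): unlike $j^*\gamma_* \simeq \gamma_* j^*$, this compatibility is not a formal consequence of the premotivic axioms for $\gamma^*$, and its verification forces one to exploit the inverse-limit presentation $S = \varprojlim S_\alpha$ together with continuity, effectively reducing the question to a smooth base where one can argue on generators in the image of $\gamma^*$.
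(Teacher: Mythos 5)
The paper itself disposes of this lemma with a one-line citation: under the standing continuity and localization hypotheses, the proof of \cite{equichar}*{Lemma 3.20} for $\DM(-,R)$ is said to apply verbatim. Your skeleton (the smooth case from the premotivic exchange $p_\#\gamma^*\simeq\gamma^*p_\#$, continuity for case (1), and the reduction of case (2) via the localization triangles to the exchange for $j_\#$, equivalently to the vanishing $i^*\gamma_*j_\#\simeq 0$) is indeed the right shape and matches the Cisinski--D\'eglise strategy. But the step you offer at the crux of case (2) is circular. On a generator $Y=\gamma_U^*W$ you rewrite $j_\#\gamma_U^*W\simeq\gamma_S^*j_\#W$, so the object to be killed is $i^*\gamma_{S*}\gamma_S^*(j_\#W)$; to conclude that this vanishes from $i^*j_\#=0$ you must move $i^*$ past $\gamma_{S*}$ (or know $\gamma_*\gamma^*\simeq\gamma_*\gamma^*(\sspt)\otimes(-)$, i.e.\ the projection formula the whole induction is after), and that is exactly the statement of case (2). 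Since every object of $\scrM(U)$ is a geometric realization of objects in the image of $\gamma_U^*$ (Barr--Beck bar resolution) and all functors in sight preserve such colimits, the ``generator case'' carries the full content of the lemma; there is no cheap formal verification there. In the known case $\scrM=\Mod_{\H\ZZ}$ the vanishing comes from the projection formula $\H\ZZ\otimes j_\#W\simeq j_\#(j^*\H\ZZ\otimes W)$, and in \cite{equichar} it is extracted from continuity together with the localization property, which is itself established via the theory over (perfect) fields --- not by a formal computation on free objects. Your own closing remark correctly identifies $\gamma_*j_\#\simeq j_\#\gamma_*$ as the obstacle, but the proposed resolution does not discharge it. A further point: your reduction ``to $S$ smooth of finite type'' implicitly commutes $\gamma_*$ past the pro-morphism $S\to S_\alpha$, i.e.\ invokes case (1), whereas the case (2) hypotheses only provide flat affine (not smooth dominant) transition maps, so that reduction also needs an argument.

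Case (1) is closer to correct but is also loose at two points. First, the transformation $f^*\gamma_*\to\gamma_*f^*$ has domain $\scrM(S)$, so one must reduce to generators of $\scrM(S)$ (using $c$-generation of $\scrM$), not of $L(\SH(S))$ as you write. Second, ``assembling the equivalences in the limit'' is not available in the naive form: $\gamma_*$ does not preserve constructible objects, so the commutation equivalences at finite levels do not live inside the subcategories identified by continuity; the standard argument (as in \cite{equichar}) tests both sides against compact constructible objects, using that $\gamma^*$ preserves compacts (because $\gamma_*$ preserves filtered colimits) and continuity in the mapping-spectra form. Note also that you only ever use smoothness of the transition maps $f_{\alpha\beta}$; with a constant pro-system this hypothesis is vacuous and the statement would fail, so the argument must use that the morphisms $T_\alpha\to S$ themselves are smooth (the intended reading of the hypothesis), giving the finite-level commutations over $S$ from which the limit argument starts.
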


\begin{proof} 
Under the continuity and localization assumption on $\scrM$, the proof in \cite{equichar}*{Lemma 3.20} for the case of $\scrM = \DM(-,R)$ applies verbatim.
\end{proof}

\subsubsection{} We now have the following extension of Theorem~\ref{thm:fieldcase}.

\begin{theorem} \label{thm:reg-mod} Let $k$ be a field of exponential characteristic $e$, and let $\scrM$ be a motivic module category on $\Reg_k$. Then the functor $\gamma^*\colon \SH \rightarrow \scrM$ induces a canonical equivalence
\[
\Mod_{\gamma_*\gamma^*(\sspt)\left[\tfrac{1}{e}\right]}(\SH(-)) \xrightarrow{\sim} \scrM\left[\tfrac{1}{e}\right]
\]
of premotivic categories on $\Reg_k$.
\end{theorem}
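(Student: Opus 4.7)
The plan is to apply Theorem~\ref{thm:main} premotivically, for which it suffices to verify the projection formula for $(\SH|_{\Reg_k},\scrM)[\tfrac1e]$ at every $S\in\Reg_k$, i.e.\ that the comparison map
\[
c_M\colon \gamma_{S,*}\gamma_S^*(\sspt_S)\bigl[\tfrac{1}{e}\bigr]\otimes M \longrightarrow \gamma_{S,*}\gamma_S^*(M)\bigl[\tfrac{1}{e}\bigr]
\]
is an equivalence in $\SH(S)[\tfrac1e]$ for every $M$. Both sides preserve arbitrary colimits in $M$ (since $\gamma_*$ preserves sifted colimits by hypothesis and finite coproducts as a right adjoint between stable $\infty$-categories), so the game is to reduce to the field case of Theorem~\ref{thm:fieldcase} by pulling back to residue fields.

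The first key step is joint conservativity of the family $\{i_s^*\colon \SH(S)[\tfrac1e]\to\SH(\kappa(s))[\tfrac1e]\}_{s\in S}$, which is classical in this framework: Noetherian induction on $\dim S$, using the localization sequence in $\SH$ to peel off an open neighborhood of a generic point---on which $M$ vanishes by continuity of $\SH$ applied to the presentation of the generic point as a cofiltered limit of open subsets---and applying induction to the closed complement, which is regular of strictly smaller dimension.

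The second step is the base-change compatibility $i_s^* c_M \simeq c_{i_s^* M}$. I would factor $i_s$ as $\Spec\kappa(s)\hookrightarrow \Spec\mathcal{O}_{S,s}\to S$: the right arrow is the cofiltered limit of inclusions of affine open neighborhoods of $s$, with smooth flat transition maps, so Lemma~\ref{lem:t-vs-pull}(1) gives the exchange $i^*\gamma_{S,*}\simeq \gamma_{*}i^*$ along it. After inverting $e$ (invoking the equivalence $\SH(k)[\tfrac1e]\simeq\SH(k_{\mathrm{perf}})[\tfrac1e]$ from Example~\ref{exmp-field} when $k$ is imperfect, so that regular $k$-schemes become essentially smooth over $k_{\mathrm{perf}}$), Popescu's theorem presents $\Spec\mathcal{O}_{S,s}$ as a cofiltered limit of smooth affine $k$-schemes with flat affine transition maps, and Lemma~\ref{lem:t-vs-pull}(2) covers the closed immersion factor. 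Combined with symmetric monoidality of $i_s^*$ and the premotivic compatibility $\gamma_{\kappa(s)}^* i_s^* \simeq i_s^*\gamma_S^*$, this identifies $i_s^* c_M$ with the projection-formula comparison over the field $\kappa(s)$ applied to $i_s^* M$.

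Putting the pieces together: Theorem~\ref{thm:fieldcase} establishes the projection formula over every residue field, so $i_s^* c_M$ is an equivalence for every $s\in S$, and by conservativity $c_M$ itself is an equivalence. Theorem~\ref{thm:main} then yields the pointwise equivalence $\scrM(S)[\tfrac1e]\simeq\Mod_{\gamma_*\gamma^*(\sspt_S)[\tfrac1e]}(\SH(S))$, and the naturality of the construction of $c$---built into the premotivic formulation of Theorem~\ref{thm:main}---upgrades this to an equivalence of premotivic categories on $\Reg_k$. The principal obstacle is the second step: verifying that $i_s$ admits a factorization whose pieces lie in the scope of Lemma~\ref{lem:t-vs-pull}. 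This genuinely uses the localization and continuity hypotheses on $\scrM$, and in the imperfect-field case requires the reduction along $k\to k_{\mathrm{perf}}$ after inverting $e$ to realize $\mathcal{O}_{S,s}$ as a limit of smooth $k$-schemes.
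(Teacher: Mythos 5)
Your overall skeleton (verify the projection formula at each $S\in\Reg_k$ after inverting $e$, then quote Theorem~\ref{thm:main} and its naturality) is the paper's, and for $S$ essentially smooth over a field your reduction to residue fields is a workable variant of the paper's induction on dimension: there $\Spec\mathcal{O}_{S,s}$ really is a cofiltered limit of smooth affine $k$-schemes, so both halves of Lemma~\ref{lem:t-vs-pull} apply to the factorization $\Spec\kappa(s)\hookrightarrow\Spec\mathcal{O}_{S,s}\to S$, and Theorem~\ref{thm:fieldcase} finishes. The genuine gap is in your treatment of general regular $S$, i.e.\ in the sentence that invokes $\SH(k)[\tfrac1e]\simeq\SH(k_{\mathrm{perf}})[\tfrac1e]$ ``so that regular $k$-schemes become essentially smooth over $k_{\mathrm{perf}}$'' and then applies Popescu. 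That equivalence is a statement about categories over the field $k$ itself; it does not make $\mathcal{O}_{S,s}$ into a $k_{\mathrm{perf}}$-algebra, does not make $k\to\mathcal{O}_{S,s}$ geometrically regular, and hence does not produce the presentation of $\Spec\mathcal{O}_{S,s}$ as a limit of smooth, separated, finite-type $k$-schemes with flat affine transition maps that Lemma~\ref{lem:t-vs-pull}(2) requires (Popescu needs geometric regularity, which fails for imperfect $k$ in general). Nor can you instead perfect $S$ itself: the Frobenius tower has finite, non-smooth transition maps, so neither the premotivic exchange nor the continuity hypothesis on $\scrM$ applies along it; the limit is non-Noetherian and so leaves $\Reg_k$, where $\scrM$ is not even defined; and the comparison map $c_M$ involves $\gamma_*\gamma^*$, so one cannot transport it along an equivalence of the $\SH$'s alone without an exchange statement for $\scrM$ that you have no access to.

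This is precisely the point where the paper does something different. Rather than pulling back to residue fields of an arbitrary regular local ring (whose closed immersion of the closed point is out of reach of Lemma~\ref{lem:t-vs-pull}(2)), it maps \emph{upward}: by continuity one reduces to $S$ Henselian local, and then Cisinski--D\'eglise's construction supplies $T\xrightarrow{f}S'\xrightarrow{q}S$ with $q^*$ and $f^*$ conservative after inverting $e$, with both $f$ and $q$ satisfying the hypotheses of Lemma~\ref{lem:t-vs-pull}(1) so that $\gamma_*$ commutes with $(qf)^*$, and with the $\infty$-gonflement $T$ a filtered union of its smooth subalgebras; continuity of $\SH$ (and of $\scrM$) then reduces the projection formula over $T$ to the essentially smooth case already established. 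If you want to keep your residue-field strategy, you would have to replace the perfection step by an argument of this kind (or a carefully justified Popescu argument over the prime subfield, checking that the approximating smooth schemes and the continuity hypothesis on $\scrM$ are actually available in $\Reg_k$); as written, the step fails.
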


\begin{proof} 
After Theorem~\ref{thm:main}, our goal is to verify that $(\SH|_{\Reg_k},\scrM)$ satisfies the projection formula. Suppose that $S \in \Reg_k$, and  let $\E \in \SH(S)$. We claim that the map
\begin{equation} \label{eq:map-e}
\gamma_*\gamma^*(\sspt_S) \otimes \E \rightarrow \gamma_*\gamma^*(\E)
\end{equation}
is an equivalence. To show this, we follow closely the logic of \cite{equichar}*{Theorem 3.1}. 

First, assume that $S$ is an essentially smooth scheme over a field. For each $x \in S$, we write $S_x$ for the localization of $S$ at $x$. Then the family of functors
\[
\{ \SH(S) \rightarrow \SH(S_x) \}
\]
is conservative by \cite{CDbook}*{Proposition 4.3.9}. Hence we are reduced to proving that the map~\eqref{eq:map-e} is an equivalence in the case that $S$ is furthermore \emph{local}. In this case, let $i\colon x \hookrightarrow S_x$ be the closed point and write $j\colon U_x \rightarrow S_x$ for the open complement. By our assumption on $S$, $U_x$ has dimension $< \dim\,S$. We consider the following commutative diagram, where the rows are cofiber sequences:
\begin{equation} \label{eq:open-cls}
\xymatrix{
j_!(j^*\gamma_*\gamma^*(\sspt_S) \otimes j^*\E)  \ar[r] \ar[d] & \gamma_*\gamma^*(\sspt_S) \otimes \E \ar[r] \ar[d] &  i_*(i^*\gamma_*\gamma^*(\sspt_S) \otimes i^*\E) \ar[d]  \\
j_!j^* \gamma_*\gamma^*(\E) \ar[r] \ar[d]_{f_1} & \gamma_*\gamma^*(\E) \ar[d]_{=} \ar[r]  & i_*i^*\gamma_*\gamma^*(\E) \ar[d]^{f_2} \\
j_!\gamma_*\gamma^*j^*\E \ar[r] & \gamma_*\gamma^*\E \ar[r] & i_*\gamma_*\gamma^*i^*\E.
}
\end{equation}
Now,
\begin{itemize}
\item The left vertical composite is an equivalence because (1) $j_*$ commutes with $\gamma_*$ by definition of a morphism of premotivic categories, 
and (2) by the induction hypothesis.
\item The right vertical composite is an equivalence using (1) Lemma~\ref{lem:t-vs-pull}.2 and (2) the case of fields, Theorem~\ref{thm:fieldcase}.
\end{itemize}
It therefore remains to show that $f_1$ and $f_2$ are equivalences.
\begin{itemize}
\item The map $f_1$ is an equivalence because $j_*$ commutes with $\gamma_*$. 
\item That $f_2$ is an equivalence follows from Lemma~\ref{lem:t-vs-pull}.2. 
\end{itemize}

Now, following the ``\emph{General case}'' of \cite{equichar}, we explain how the bootstrap to regular $k$-schemes work. By continuity (appealing to \cite{CDbook}*{Proposition 4.3.9} again), we may again assume that $S$ is a \emph{Henselian local} regular $k$-scheme. As explained in \emph{loc. cit.}, there is a sequence of regular Noetherian $k$-schemes
\[
T \stackrel{f}{\rightarrow} S' \stackrel{q}{\rightarrow} S
\]
such that the following hold:
\begin{itemize}
\item The scheme $S'$ has infinite residue field and the functor $q^*\colon \SH(S)\left[\frac{1}{e}\right] \rightarrow \SH(S')\left[\frac{1}{e}\right]$ is conservative.
\item The scheme $T$ is the \emph{$\infty$-gonflement} of $\Gamma(S', \Oscr_{S'})$ \cite{equichar}*{Definition 3.21} and the functor $f^*\colon \SH(S')\left[\frac{1}{e}\right] \rightarrow \SH(T)\left[\frac{1}{e}\right]$ is conservative.
\item Both $f$ and $q$ satisfy the hypotheses of Lemma~\ref{lem:t-vs-pull}.1, and thus $f^*$ and $q^*$ commute with $\gamma_*$.
\end{itemize}
Hence, to check that the map~\eqref{eq:map-e} is an equivalence it suffices to check that it is an equivalence after applying $(qf)^*$. Since $T$ is, by construction, the spectrum of a filtered union of its smooth subalgebras we invoke continuity of $\SH$ to conclude.
\end{proof}

\newcommand{\BM}{\mathrm{BM}}
\subsubsection{} Lastly, we have the following class of examples of motivic module categories for which localization and continuity holds. We will make the following assumption:
\begin{itemize}
\item for a base scheme $S$ and $\mathcal{A}_\infty$-ring spectrum $\E \in \SH(S)$, there exists an $\infty$-category $\Cor_S^\E$ such that its homotopy category is the $\mathrm{h}\Spc$-enriched category $\mathrm{h}\Cor_S^\E$ of \cite{ehksy2}. 
\end{itemize}
With this assumption in play that any motivic $\mathcal{A}_\infty$-ring spectrum $\E$ gives rise the motivic module category $\DM^\E$ \cite{ehksy2} as explained in Example~\ref{ex:cor-e}. While this makes the next results conditional, we will explain unconditional instances of these results in Example~\ref{ex:hz}.

\begin{proposition} \label{prop:e-cts} Let $\scrS \subseteq \Sch_S$. Then, for any $\mathcal{A}_\infty$-ring spectrum $\E \in \SH(S)$, premotivic category $\DM^{\E}\colon \scrS^{\op} \rightarrow \Cat_{\infty}$ satisfies continuity for dominant affine morphisms.
\end{proposition}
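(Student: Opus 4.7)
The plan is to reduce continuity of $\DM^\E$ along dominant affine transition maps to the corresponding continuity property of $\SH$, which is foundational in six-functor formalism (cf.~\cite{hoyois-sixops}, \cite{CDbook}). The bridge between the two categories is the monadic adjunction $\gamma^*_{\C_X}\colon \SH(X) \rightleftarrows \DM^\E(X)\colon\gamma_{C*}$ of Proposition~\ref{prop:sh-c}.

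Fix a cofiltered limit $X = \lim_{i\in I}X_i$ in $\scrS$ with dominant affine transition maps; the task is to show that the canonical functor $\DM^\E_c(X) \to \lim_{i \in I} \DM^\E_c(X_i)$ is an equivalence. By definition, $\DM^\E_c(X)$ is the thick subcategory of $\DM^\E(X)$ generated by $\gamma^*_{\C_X}(f_{\#}f^*\Sigma^{p,q}\sspt_X)$ for smooth morphisms $f\colon Y \to X$ in $\scrS$ and integers $p,q$. First I would handle essential surjectivity: by Grothendieck's limit theorems, any smooth $f\colon Y \to X$ of finite presentation descends to a smooth $f_j\colon Y_j \to X_j$ with $Y \simeq Y_j\times_{X_j} X$, and adequacy of $\C$ (\S\ref{ad}) then ensures that pullback along $X \to X_j$ intertwines with $f_{\#}$ and $\gamma^*_\C$, so the generator descends to $\DM^\E_c(X_j)$.

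Next, for fully faithfulness, I would compute mapping spaces via the adjunction:
\[
\Maps_{\DM^\E(X)}\bigl(\gamma^*_{\C_X} A,\gamma^*_{\C_X} B\bigr) \simeq \Maps_{\SH(X)}\bigl(A,\gamma_{C*}\gamma^*_{\C_X} B\bigr)
\]
for constructible $A, B \in \SH(X)$ descending from $A_j, B_j$. Because $\E$ is a Cartesian section of $\SH|_{\scrS}$ and the $\Cor^\E$ construction is functorial in $X$, the endofunctor $\gamma_{C*}\gamma^*_{\C_X}$ is compatible with pullback along the transition maps: $\gamma_{C*}\gamma^*_{\C_X}(B)$ is the pullback of $\gamma_{C*}\gamma^*_{\C_{X_j}}(B_j)$. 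Continuity of $\SH$ applied to the constructible source $A$, together with this compatibility, would then identify the mapping space as the colimit of the corresponding mapping spaces over the $X_i$.

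The main obstacle will be that the target $\gamma_{C*}\gamma^*_{\C_X} B$ is typically not itself a constructible object of $\SH(X)$, so continuity of $\SH$ does not apply naively to the mapping space above. To handle this, I would combine the sifted-colimit preservation of $\gamma_{C*}$ (Definition~\ref{def:mmc}) with Lemma~\ref{dual} on strongly dualizable objects, where the monad $\gamma_{C*}\gamma^*_\C$ reduces to tensoring with the Cartesian section $\gamma_{C*}\gamma^*(\sspt)$. A thick subcategory argument, together with continuity of $\SH$ on its constructible core, should then yield the required compatibility of the mapping space with the colimit over $i$, completing the proof.
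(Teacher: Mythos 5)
Your reduction has a genuine gap at its central step. The compatibility you assert --- that $\gamma_{C*}\gamma^*_{\C_X}(B)$ is the pullback of $\gamma_{C*}\gamma^*_{\C_{X_j}}(B_j)$ along the projection $X\to X_j$ --- does not follow from functoriality of the $\Cor^\E$-construction. For a morphism $p$ one only gets a canonical exchange transformation $p^*\gamma_{C*}\to\gamma_{C*}p^*$, and its invertibility for the pro-smooth (not finite type, not smooth) projection from the limit is exactly the content of Lemma~\ref{lem:t-vs-pull}, whose proof in the paper \emph{assumes} continuity of the premotivic category --- i.e.\ the statement you are trying to prove. So as written the argument is circular. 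Your fallback for the ``main obstacle'' has the same problem in a different guise: replacing the monad $\gamma_{C*}\gamma^*$ by $\gamma_{C*}\gamma^*(\sspt)\otimes(-)$ is the projection formula, which in this paper is a \emph{consequence} of continuity (Theorem~\ref{thm:reg-mod}), not an input; and Lemma~\ref{dual} requires strongly dualizable generators, which are not available over a general base $\scrS\subseteq\Sch_S$ (compact-rigid generation is only known after localization over fields, Example~\ref{exmp-field}), whereas the proposition is stated for arbitrary bases.

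The paper proves the statement at the level of the correspondence category itself rather than through the adjunction. Using the explicit description of mapping spaces in $\Cor^\E_S$ from \cite{ehksy2}*{Definition 4.1.1} as colimits, over the poset $c_X$ of reduced subschemes of $X\times_S Y$ finite and universally open over $X$, of Borel--Moore spectra $\E^{\BM}(Z/X)$, one shows that
\[
\colim_{i\in I^{\op}}\Cor^\E_S(X_i,Y)\to\Cor^\E_S(X,Y)
\]
is an equivalence: continuity of $\SH$ handles the Borel--Moore terms, the dual of \cite{ehksy}*{Lemma 4.1.26} handles the interchange of the two colimits, and \cite{CDbook}*{Propositions 8.3.9, 8.3.6} (this is where dominance and affineness of the transition maps enter) guarantee that every admissible $Z\subseteq X\times_S Y$ descends to a finite level. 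Continuity of $\DM^\E$ on constructible objects then follows by the standard Cisinski--D\'eglise argument \cite{CDbook}*{Theorem 11.1.24}, for which your descent-of-generators observation via Noetherian approximation and adequacy is indeed one ingredient. Without some version of this generator-level computation, the full-faithfulness half of your argument cannot be completed.
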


\begin{proof} We first claim the analog of \cite{CDbook}*{Proposition 9.3.9} for $\E$-correspondences. Let $(X_i)_{i \in I}$ be a cofiltered diagram of separated $S$-schemes of finite type with affine dominant transition morphisms. Let $X = \varprojlim_i X_i$, which is assumed to exist in $\Sch_S$ and is assumed to be Noetherian. Then we claim that for any separated $S$-scheme $Y$ of finite type, the map
\begin{equation} \label{eq:compare}
\colim_{i \in I^{\op}} \Cor_S^{\E}(X_i, Y) \rightarrow \Cor_S^{\E}(X, Y)
\end{equation}
is an equivalence. 

To do so, we use the dual of \cite{ehksy}*{Lemma 4.1.26}. Denote by $c_{X_i}$ (resp. $c_{X}$) be the filtered poset of reduced subschemes of $X_i \times_S Y$ (resp. $X \times_S Y$) which are finite and universally open over $X_i$ (resp. $X$). We denote by $\mathrm{Sub}(c_X)$ the poset of full sub-posets of $c_X$. We then have a functor $K\colon I \rightarrow \mathrm{Sub}(c_X)$, $i \mapsto K_i = c_{X_i}$ where $c_{X_i}$ is regarded as a full sub-poset in the obvious way. By continuity of $\SH$, the functor $\E^{\BM}(-/X)\colon c_X \rightarrow \Spc$ restricts to a functor $\E^{\BM}(-/X_i)\colon c_{X_i} \rightarrow \Spc$. Hence the map~\eqref{eq:compare} is, by \cite{ehksy2}*{Definition 4.1.1}, equal to the map
\[
\colim_{I^{\op}} \colim_{c_{X_i}} \E^{\BM}(Z_i/X_i) \rightarrow  \colim_{Z \in c_X} \E^{\BM}(Z/X),
\]
which we claim is an equivalence. The hypotheses of  \cite{ehksy}*{Lemma 4.1.26} follow easily (under the hypotheses that the transition maps are affine and dominant) by \cite{CDbook}*{Proposition 8.3.9, 8.3.6}. Hence the desired claim follows. The rest of the proof follows as in the case of $\DM$ from \cite{CDbook}*{Theorem 11.1.24}.\end{proof}

\begin{proposition} \label{prop:e-cts} Let $k$ be a field and let $\E \in \SH(k)$ be a $\mathcal{A}_\infty$-ring spectrum. Then the premotivic category $\DM^{\E}\colon \scrS^{\op} \rightarrow \Cat_{\infty}$ satisfies $\Loc_i$ whenever $i$ is a closed immersion of regular schemes.
\end{proposition}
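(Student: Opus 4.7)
The strategy is to transfer localization from $\SH$---where $(\Loc_i)$ for closed immersions of regular schemes is classical, due to Morel--Voevodsky and Ayoub \cite{ayoub}---to $\DM^{\E}$ via the premotivic adjunction $\gamma^* \colon \SH \rightleftarrows \DM^{\E}$. The essential tools are conservativity and colimit-preservation of $\gamma_*$ (Proposition~\ref{prop:sh-c}.(4)), together with the compatibility of $\gamma$ with the six-functor operations featured in the localization sequence.

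First, I would record the required Beck--Chevalley formulas. The premotivic structure directly yields $\gamma^* j_! \simeq j_! \gamma^*$ for an open immersion $j$ (since $j_! \simeq j_\#$), and hence $\gamma_* j^* \simeq j^* \gamma_*$ by passing to right adjoints. For a closed immersion $i \colon Z \hookrightarrow S$ of regular schemes, the harder commutation $\gamma^* i_* \simeq i_* \gamma^*$ (equivalently $\gamma_* i_* \simeq i_* \gamma_*$) must be established using the construction of $\Cor_S^{\E}$ from \cite{ehksy2} together with proper and smooth base change in $\SH$. A convenient reduction uses continuity (proved in the preceding Proposition) to pass to the case when $S$ is henselian local, where the localization-type statements become more tractable.

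With the Beck--Chevalley compatibilities in hand, the recollement conditions follow in turn. The full faithfulness of $i_*$ in $\DM^{\E}$, and the vanishing $j^* i_* = 0$, can be tested after applying the conservative $\gamma_*$ and reduce to the analogous statements in $\SH$. For the gluing cofiber sequence $j_! j^* X \to X \to i_* i^* X$ in $\DM^{\E}(S)$, one applies $\gamma_*$---which preserves colimits and hence cofiber sequences---and by Beck--Chevalley the sequence transforms into $j_! j^* \gamma_* X \to \gamma_* X \to i_* i^* \gamma_* X$, a cofiber sequence by localization in $\SH$. The main obstacle will be the closed base change $\gamma^* i_* \simeq i_* \gamma^*$: unlike smooth base change, it is not automatic from the premotivic adjunction and requires a support-decomposition property specific to $\E$-correspondences, analogous to the one for Suslin--Voevodsky correspondences established in \cite{CDbook}*{\S 9}. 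Once this input is secured, the remainder of the argument is a formal consequence of localization in $\SH$ and the monadic nature of $(\gamma^*, \gamma_*)$.
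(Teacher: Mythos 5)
Your proposal does not follow the paper's argument, and as it stands it has a genuine gap at its core. You propose to transfer localization from $\SH$ to $\DM^{\E}$ along the conservative, colimit-preserving $\gamma_*$, granting yourself the exchange equivalences needed to rewrite $\gamma_*(j_\# j^*X \to X \to i_*i^*X)$ as the localization triangle in $\SH$. But the exchanges you need are exactly the hard content of the theorem. First, $\gamma^* i_* \simeq i_*\gamma^*$ is \emph{not} equivalent to $\gamma_* i_* \simeq i_*\gamma_*$: the latter is formal (it is the mate of $\gamma^* i^* \simeq i^*\gamma^*$, which holds for any premotivic adjunction), while the former, and likewise the commutations $i^*\gamma_* \simeq \gamma_* i^*$ and $\gamma_* j_\# \simeq j_\#\gamma_*$ that your gluing step silently uses, are nontrivial statements about $\E$-correspondences. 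You only flag these as requiring ``a support-decomposition property specific to $\E$-correspondences,'' but that property is essentially the localization theorem itself; in both \cite{CDbook} and \cite{equichar} (and in this paper, see Lemma~\ref{lem:t-vs-pull}, which is proved \emph{assuming} localization and continuity, following \cite{equichar}*{Lemma 3.20}), such compatibilities of $\gamma_*$ with $i^*$ are \emph{consequences} of localization, not inputs to it. So your plan either leaves the decisive step unproved or becomes circular.

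For comparison, the paper's proof avoids the adjunction-transfer strategy entirely: since $\DM^{\E}$ is built from Nisnevich-local objects it is Nisnevich separated, so \cite{CDbook}*{Proposition 6.3.14} gives the weak localization property ($\Loc_i$ for closed immersions admitting smooth retractions); arguing as in \cite{CDbook}*{Corollary 6.3.15} (Nisnevich-locally a closed immersion of smooth schemes admits such a retraction) upgrades this to closed immersions between smooth schemes; and the passage to closed immersions of \emph{regular} schemes is the continuity bootstrap of \cite{equichar}*{Proposition 3.12}, using the continuity statement for $\DM^{\E}$ proved just before. Your reduction to henselian local schemes via continuity is in the spirit of that last step, but the core case of smooth (or retraction-admitting) pairs must be handled by an argument internal to the correspondence category, not by conservativity of $\gamma_*$, since the requisite base-change formulas are not yet available at that stage.
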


\begin{proof}
Since $\DM^\E$ is constructed from Nisnevich local objects, it is Nisnevich separated. By \cite{CDbook}*{Proposition 6.3.14}, it has the weak localization property, i.e., it has $\Loc_i$ for any closed immersion with smooth retractions. Arguing as in \cite{CDbook}*{Corollary 6.3.15}, it has the localization property with respect to any closed immersion between smooth schemes. The rest of the argument then follows as in \cite{equichar}*{Proposition 3.12}, which uses the continuity results established in Proposition~\ref{prop:e-cts} as above.
\end{proof}

\subsubsection{} From this we conclude:

\begin{corollary} \label{cor:reg} Let $k$ be a field and $\E \in \SH(k)$ an $\mathcal{A}_{\infty}$ ring spectrum. Then we have a canonical equivalence
\[
\DM^\E\left[\tfrac1e\right]\simeq \Mod_{\gamma_*\gamma^*(\sspt)\left[\tfrac1e\right]}(\SH(-))
\]
of premotivic categories on on $\Reg_k$.
\end{corollary}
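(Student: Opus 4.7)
The strategy is to package the two preceding propositions so that Theorem~\ref{thm:reg-mod} applies directly to the premotivic category $\DM^\E$. Concretely, the plan is to verify each of the hypotheses of Theorem~\ref{thm:reg-mod}: (i) that $\DM^\E$ is a motivic module category over $\Reg_k$, (ii) that it satisfies localization $(\Loc)$ in the sense of Definition~\ref{def:loci}, and (iii) that it is continuous with respect to the class of transition maps used in the proof of Theorem~\ref{thm:reg-mod} (namely dominant affine smooth transitions as in Lemma~\ref{lem:t-vs-pull}).

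First I would explain that the assignment $S\mapsto \Cor_S^\E$ fits into the axiomatics of Section~\ref{sec:corr}: it is a functor $\Reg_k^{\op}\to\CorrCat^\otimes$ and it is adequate in the sense of \S\ref{ad}, the smooth base change $p_\#\gamma_{\Cor^\E}\simeq\gamma_{\Cor^\E}p_\#$ coming from the Borel--Moore description of $\Cor^\E$ used in \cite{ehksy2} together with smooth base change for $\E^{\BM}$. Consequently, Proposition~\ref{prop:sh-c} supplies a premotivic adjunction
\[
\gamma^*\colon \SH|_{\Reg_k}\rightleftarrows \SH(\Cor^\E)=\DM^\E\colon \gamma_*,
\]
and, since the Nisnevich topology is used, the third clause of Proposition~\ref{prop:sh-c} shows that $\DM^\E$ is a motivic module category over $\Reg_k$.

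Next I would invoke Proposition~\ref{prop:e-cts} (the second one, about localization) to obtain $(\Loc_i)$ for every closed immersion $i$ between regular $k$-schemes, and the first Proposition~\ref{prop:e-cts} to obtain continuity with respect to cofiltered diagrams with dominant affine transition maps. These are exactly the two properties imposed at the beginning of \S\ref{sec:regk} as additional hypotheses on $\scrM$ needed to run the proof of Theorem~\ref{thm:reg-mod}; note that the localization statement needed there is only used along closed immersions of regular schemes (those appearing in the localization triangle~\eqref{eq:open-cls} and Lemma~\ref{lem:t-vs-pull}.2), and that the continuity statement is only used for towers of dominant affine smooth maps (Lemma~\ref{lem:t-vs-pull}.1), so the precise hypotheses of Proposition~\ref{prop:e-cts} suffice.

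Finally, having verified the hypotheses, Theorem~\ref{thm:reg-mod} produces the desired equivalence of premotivic categories
\[
\Mod_{\gamma_*\gamma^*(\sspt)\left[\tfrac1e\right]}(\SH(-))\xrightarrow{\sim}\DM^\E\left[\tfrac1e\right]
\]
on $\Reg_k$. The genuine work has already been done in Propositions~\ref{prop:e-cts}; the corollary itself is essentially a packaging statement. The only subtle point I foresee is to be careful that the localization $\Loc_i$ used inside the proof of Theorem~\ref{thm:reg-mod} is always applied to a closed immersion between regular $k$-schemes (which it is, since the closed point of a local essentially smooth $k$-scheme is a regular $k$-scheme), so that the hypothesis of Proposition~\ref{prop:e-cts} is really met; otherwise the bootstrap over $\Reg_k$ is identical to the one carried out for Theorem~\ref{thm:reg-mod}.
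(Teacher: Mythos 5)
Your proposal is correct and follows the same route as the paper, which states the corollary as an immediate consequence of the two preceding propositions (continuity and localization for $\DM^\E$) together with Theorem~\ref{thm:reg-mod}, the motivic-module-category structure on $\DM^\E$ coming from the correspondence-category machinery of Section~\ref{sec:corr} as in Example~\ref{ex:cor-e}. Your extra care in checking that the localization and continuity hypotheses are only invoked along closed immersions of regular schemes and dominant affine towers, respectively, is a faithful unpacking of what the paper leaves implicit rather than a different argument.
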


\begin{example} \label{ex:hz} As explained in \cite{ehksy2}*{4.1.19}, the hypothetical $\infty$-category $\Cor^\E_S$ is equvialent to $\mathrm{h}\Cor^\E_S$ whenever $S$ is a essentially smooth over a perfect field $k$ and $\E$ is pulled back from $k$ and lies in the heart of the effective homotopy $t$-structure over $k$. Hence, Theorem~\ref{cor:reg} holds unconditionally whenever $\E$ is pulled back from the prime subfield of $k$ and lies in the heart of the effective homotopy $t$-structure there. 

Examples of such spectra are the motivic cohomology spectrum $\H\ZZ$ and its Milnor-Witt counterpart $\H\widetilde\ZZ$. Furthermore \cite{ehksy2}*{Proposition 4.3.6} (resp. \cite{ehksy2}*{Proposition 4.3.19}), it is proved that $\DM^{\H\ZZ}(S) \simeq \DM(S)$ (resp. $\DM^{\H\widetilde\ZZ}(S) \simeq \widetilde{\DM}(S)$) whenever $S$ is essentially smooth over a Dedekind domain (resp. essentially smooth over a perfect field) \cite{ehksy2}*{Proposition 4.3.8} (resp. \cite{ehksy2}*{Proposition 4.3.19}). By the continuity result of Proposition~\ref{prop:e-cts} we can enhance the comparison results for $\DM$ to regular schemes over fields. 
While $\widetilde{\DM}(S)$ is not defined outside of smooth schemes over fields, Corollary~\ref{cor:reg} promotes the comparison results between $\widetilde{\DM}$ and modules over $\H\widetilde{\ZZ}$ of \cite{garkusha} and \cite{bachmann-fasel} at least to smooth schemes over fields. We contend, however, that $\DM^{\H\widetilde\ZZ}(S)$ is a decent definition for $\widetilde{\DM}(S)$ in general.
\end{example}

\begin{bibdiv}
\begin{biblist}



\bib{agp}{article}{
Author = {A. Ananyevskiy}
Author = {G. Garkusha}
Author = {I. Panin},
Title = {Cancellation theorem for framed motives of algebraic varieties},
Year = {2016},
Eprint = {arXiv:1601.06642},
}

\bib{ayoub}{article}{
    AUTHOR = {Ayoub, J.},
     TITLE = {Les six op\'erations de {G}rothendieck et le formalisme des cycles \'evanescents dans le monde motivique. {I}},
   JOURNAL = {Ast\'erisque},
  FJOURNAL = {Ast\'erisque},
    NUMBER = {314},
      YEAR = {2007},
     PAGES = {x+466 pp. (2008)},
      ISSN = {0303-1179},
      ISBN = {978-2-85629-244-0},
   MRCLASS = {14F20 (14C25 14F42 18A40 18F10 18F20 18G55 19E15)},
  MRNUMBER = {2423375},
MRREVIEWER = {Christian Haesemeyer},
}

\bib{ret}{article}{
    AUTHOR = {Bachmann, Tom},
     TITLE = {Motivic and real \'{e}tale stable homotopy theory},
   JOURNAL = {Compos. Math.},
  FJOURNAL = {Compositio Mathematica},
    VOLUME = {154},
      YEAR = {2018},
    NUMBER = {5},
     PAGES = {883--917},
      ISSN = {0010-437X},
   MRCLASS = {14F42},
  MRNUMBER = {3781990},
       DOI = {10.1112/S0010437X17007710},
       URL = {https://doi.org/10.1112/S0010437X17007710},
}
\bib{bachmann-fasel}{article}{
Author = {Bachmann, Tom}
Author = {Fasel, Jean},
Title = {On the effectivity of spectra representing motivic cohomology theories},
Year = {2017},
Eprint = {arXiv:1710.00594},
}

\bib{bachmann-hoyois}{article}{
Author = {Bachmann, T.},
Author = {Hoyois, M.}
Title = {Norms in motivic homotopy theory},
Year = {2018}
Eprint = {arXiv:1711.0306}
}

\bib{calmes-fasel}{article}{
Author = {Calm{\`e}s, Baptiste},
Author = {Fasel, Jean},
Title = {The category of finite $\MW$-correspondences},
Year = {2017},
Eprint = {arXiv:1412.2989v2},
}

\bib{CDbook}{article}{
   AUTHOR = {Cisinski, Denis-Charles}
      AUTHOR = {D\'eglise, Fr\'ed\'eric},
   TITLE = {Triangulated categories of mixed motives}
   Eprint = {arXiv:0912.2110}
   }

\bib{equichar}{article}{ 
    AUTHOR = {Cisinski, Denis-Charles}
    AUTHOR = {D\'eglise, Fr\'ed\'eric},
     TITLE = {Integral mixed motives in equal characteristic},
   JOURNAL = {Doc. Math.},
  FJOURNAL = {Documenta Mathematica},
      YEAR = {2015},
    NUMBER = {Extra vol.: Alexander S. Merkurjev's sixtieth birthday},
     PAGES = {145--194},
      ISSN = {1431-0635},
   MRCLASS = {14C15 (14F42 14F43 18E30)},
  MRNUMBER = {3404379},
MRREVIEWER = {Satoshi Mochizuki},
}

\bib{etalemotives}{article}{
    AUTHOR = {Cisinski, Denis-Charles}
    AUTHOR = {D\'eglise, Fr\'ed\'eric},
         TITLE = {\'{E}tale motives},
   JOURNAL = {Compos. Math.},
  FJOURNAL = {Compositio Mathematica},
    VOLUME = {152},
      YEAR = {2016},
    NUMBER = {3},
     PAGES = {556--666},
      ISSN = {0010-437X},
   MRCLASS = {14F20 (14F42)},
  MRNUMBER = {3477640},
MRREVIEWER = {Matthias Wendt},
       DOI = {10.1112/S0010437X15007459},
       URL = {http://dx.doi.org/10.1112/S0010437X15007459},
}

\bib{dk}{article}{
Author = {Druzhinin, Andrei}
Author = {Kolderup, H\aa kon},
Title = {Cohomological correspondence categories},
Year = {2018},
Eprint = {arXiv:1808.05803},
}

\bib{ehksy}{article}{
Author = {Elmanto, Elden}
Author = {Hoyois, Marc}
Author = {Khan, Adeel A.}
Author = {Sosnilo, Vladimir}
Author = {Yakerson, Maria}
Title = {Motivic infinite loop spaces},
Year = {2017},
Eprint = {arXiv:1711.05248}
}

\bib{ehksy2}{article}{
Author = {Elmanto, Elden}
Author = {Hoyois, Marc}
Author = {Khan, Adeel A.}
Author = {Sosnilo, Vladimir}
Author = {Yakerson, Maria}
Title = {Framed transfers and motivic fundamental classes},
Year = {2018},
Eprint = {arXiv:1809.10666}
}

\bib{perfect}{article}{
Author = {Elmanto, Elden}
Author = {Khan, Adeel A.}
Title = {Perfection in Motivic Homotopy Theory}
Year = {2019},
Eprint = {arXiv:1812.07506},
}

\bib{druzhinin}{article}{
Author = {Druzhinin, Andrei},
Title = {Effective Grothendieck--Witt motives of smooth varieties},
Year = {2017},
Eprint = {arXiv:1709.06273},
}

\bib{garkusha}{article}{
Author = {Garkusha, Grigory},
Title = {Reconstructing rational stable motivic homotopy theory},
Year = {2017},
Eprint = {arXiv:1705.01635},
}


\bib{deglise-fasel}{article}{
   AUTHOR = {D\'eglise, Fr\'ed\'eric},
   AUTHOR = {Fasel, Jean}
   TITLE = {$MW$-motivic complexes}
   Eprint = {arXiv:1708.06095}
   }

\bib{dro}{article}{
    AUTHOR = {Dundas, Bj\o rn Ian}
    AUTHOR = {R\"ondigs, Oliver}
    AUTHOR = {\O stv\ae r, Paul Arne},
     TITLE = {Motivic functors},
   JOURNAL = {Doc. Math.},
  FJOURNAL = {Documenta Mathematica},
    VOLUME = {8},
      YEAR = {2003},
     PAGES = {489--525},
      ISSN = {1431-0635},
   MRCLASS = {55P42 (14F42)},
  MRNUMBER = {2029171},
MRREVIEWER = {Mark Hovey},
}

\bib{elso}{article}{
Author = {Elmanto, Elden}
Author = {Levine, Marc}
Author = {Spitzweck, Markus}
Author = {\O stv\ae r, Paul Arne},
Title = {Motivic Landweber Exact Theories and \'{E}tale Cohomology},
Year = {2017},
Eprint = {arXiv:1711.06258},

}

\bib{cancellation}{article}{
AUTHOR = {Fasel, Jean},
AUTHOR = {{\O}stv{\ae}r, P. A.}
TITLE = {A Cancellation Theorem for Milnor-Witt Correspondences}
Year = {2017},
Eprint = {arXiv:1708.06098},
}
\bib{ggn}{article}{
    AUTHOR = {Gepner, David}
    AUTHOR = {Groth, Moritz}
    AUTHOR = {Nikolaus, Thomas},
     TITLE = {Universality of multiplicative infinite loop space machines},
   JOURNAL = {Algebr. Geom. Topol.},
  FJOURNAL = {Algebraic \& Geometric Topology},
    VOLUME = {15},
      YEAR = {2015},
    NUMBER = {6},
     PAGES = {3107--3153},
      ISSN = {1472-2747},
   MRCLASS = {55P48 (19D23 55P43)},
  MRNUMBER = {3450758},
MRREVIEWER = {John A. Lind},
       URL = {https://doi.org/10.2140/agt.2015.15.3107},
}


\bib{hoyois-sixops}{article}{
 AUTHOR = {Hoyois, M.},
     TITLE = {The six operations in equivariant motivic homotopy theory},
   JOURNAL = {Adv. Math.},
  FJOURNAL = {Advances in Mathematics},
    VOLUME = {305},
      YEAR = {2017},
     PAGES = {197--279},
      ISSN = {0001-8708},
   MRCLASS = {14F42 (55P91)},
  MRNUMBER = {3570135},
  MRREVIEWER = {Jeremiah Ben Heller},
       DOI = {10.1016/j.aim.2016.09.031},
       URL = {http://dx.doi.org/10.1016/j.aim.2016.09.031},
 }

\bib{hoyois-cdh}{article}{
Author = {Hoyois, M.},
Title = {Equivariant classifying spaces and cdh descent for the homotopy K-theory of tame stacks},
Year = {2017},
Eprint = {arXiv:1604.06410}
}

\bib{hoyois-loc}{article}{
Author = {Hoyois, M.},
Title = {The localization theorem for framed motivic spaces},
Year = {2018},
Eprint = {arXiv:1807.04253},
}

\bib{hko}{article}{
    AUTHOR = {Hoyois, Marc}
    AUTHOR = {Kelly, Shane}
    AUTHOR = {\O stv\ae r, Paul Arne},
     TITLE = {The motivic {S}teenrod algebra in positive characteristic},
   JOURNAL = {J. Eur. Math. Soc. (JEMS)},
  FJOURNAL = {Journal of the European Mathematical Society (JEMS)},
    VOLUME = {19},
      YEAR = {2017},
    NUMBER = {12},
     PAGES = {3813--3849},
      ISSN = {1435-9855},
   MRCLASS = {14F42 (19E15)},
  MRNUMBER = {3730515},
MRREVIEWER = {Ramdorai Sujatha},
       DOI = {10.4171/JEMS/754},
       URL = {https://doi.org/10.4171/JEMS/754},
}

\bib{ga-roz1}{article}{
    AUTHOR = {Gaitsgory, Dennis}
    AUTHOR = {Rozenblyum, Nick},
     TITLE = {A study in derived algebraic geometry. {V}ol. {I}.
              {C}orrespondences and duality},
    SERIES = {Mathematical Surveys and Monographs},
    VOLUME = {221},
 PUBLISHER = {American Mathematical Society, Providence, RI},
      YEAR = {2017},
     PAGES = {xl+533pp},
      ISBN = {978-1-4704-3569-1},
   MRCLASS = {14F05 (18D05 18G55)},
  MRNUMBER = {3701352},
}
%



\bib{adeel}{thesis}{
author = {Khan, Adeel},
title = {Motivic homotopy theory in derived algebraic geometry},
Eprint = {https://www.preschema.com/thesis/thesis.pdf}
}

\bib{yang-zhao-levine}{article}{
  title={Algebraic elliptic cohomology theory and flops, I},
  author={Levine, M.}
  author={Yang, Y.}
  author={Zhao, G.},
  Eprint={arXiv:1311.2159},
  year={2013}
}

\bib{htt}{book}{
   author={Lurie, J.},
   title={Higher topos theory},
   series={Annals of Mathematics Studies},
   volume={170},
   publisher={Princeton University Press, Princeton, NJ},
   date={2009},
   pages={xviii+925},
   isbn={978-0-691-14049-0},
   isbn={0-691-14049-9},
    review={\MR{2522659 (2010j:18001)}},
    doi={10.1515/9781400830558},
}

\bib{higheralgebra}{book}{
   author={Lurie, J.},
   title = {Higher Algebra},
   Eprint = {http://www.math.harvard.edu/~lurie/papers/HA.pdf}
   note = {September 18, 2017. Available at \url{http://www.math.harvard.edu/~lurie/papers/HA.pdf}.}
}

\bib{sag}{book}{
   author={Lurie, J.},
   title = {Spectral Algebraic Geometry},
   Eprint = {http://www.math.harvard.edu/~lurie/papers/SAG-rootfile.pdf}
   note = {February 3, 2018. Available at \url{http://www.math.harvard.edu/~lurie/papers/SAG-rootfile.pdf}.}
}

\bib{mnn}{article}{
    AUTHOR = {Mathew, Akhil}
    AUTHOR = {Naumann, Niko}
    AUTHOR = {Noel, Justin},
     TITLE = {Nilpotence and descent in equivariant stable homotopy theory},
   JOURNAL = {Adv. Math.},
  FJOURNAL = {Advances in Mathematics},
    VOLUME = {305},
      YEAR = {2017},
     PAGES = {994--1084},
      ISSN = {0001-8708},
   MRCLASS = {55P91 (55P42)},
  MRNUMBER = {3570153},
MRREVIEWER = {Gregory Z. Arone},
       URL = {https://doi.org/10.1016/j.aim.2016.09.027},
}

\bib{mvw}{book}{
    AUTHOR = {Mazza, Carlo},
    AUTHOR = {Voevodsky, Vladimir},
    AUTHOR = {Weibel, Charles},
     TITLE = {Lecture notes on motivic cohomology},
    SERIES = {Clay Mathematics Monographs},
    VOLUME = {2},
 PUBLISHER = {American Mathematical Society, Providence, RI; Clay
              Mathematics Institute, Cambridge, MA},
      YEAR = {2006},
     PAGES = {xiv+216},
      ISBN = {978-0-8218-3847-1; 0-8218-3847-4},
   MRCLASS = {14F42 (19E15)},
  MRNUMBER = {2242284},
MRREVIEWER = {Thomas Geisser},
}




\bib{morel-voevodsky}{article}{
    author={Morel, F.},
    author={Voevodsky, V.},
    title={${\bf A}^{1}$-homotopy theory of schemes},
    journal={Inst. Hautes \'{E}tudes Sci. Publ. Math.},
    number={90},
    date={1999},
    pages={45--143},
    issn={0073-8301},
    }
    
    \bib{riou-sw}{article}{
    AUTHOR = {Riou, J.},
     TITLE = {Dualit\'e de {S}panier-{W}hitehead en g\'eom\'etrie alg\'ebrique},
   JOURNAL = {C. R. Math. Acad. Sci. Paris},
  FJOURNAL = {Comptes Rendus Math\'ematique. Acad\'emie des Sciences. Paris},
    VOLUME = {340},
      YEAR = {2005},
    NUMBER = {6},
     PAGES = {431--436},
      ISSN = {1631-073X},
   MRCLASS = {14F35 (55P25)},
  MRNUMBER = {2135324},
MRREVIEWER = {Feng-Wen An},
       DOI = {10.1016/j.crma.2005.02.002},
       URL = {http://dx.doi.org/10.1016/j.crma.2005.02.002},
}

\bib{robalo}{article}{
    AUTHOR = {Robalo, M.},
     TITLE = {{$K$}-theory and the bridge from motives to noncommutative motives},
   JOURNAL = {Adv. Math.},
  FJOURNAL = {Advances in Mathematics},
    VOLUME = {269},
      YEAR = {2015},
     PAGES = {399--550},
      ISSN = {0001-8708},
   MRCLASS = {14F42 (14A22 18D10 19E15)},
  MRNUMBER = {3281141},
MRREVIEWER = {Jens Hornbostel},
       DOI = {10.1016/j.aim.2014.10.011},
       URL = {http://dx.doi.org/10.1016/j.aim.2014.10.011},
}

\bib{ropreprint}{article}{
    AUTHOR = {R\"ondigs, Oliver}
    AUTHOR = {\O stv\ae r, Paul Arne},
     TITLE = {Motives and modules over motivic cohomology},
   JOURNAL = {C. R. Math. Acad. Sci. Paris},
  FJOURNAL = {Comptes Rendus Math\'ematique. Acad\'emie des Sciences. Paris},
    VOLUME = {342},
      YEAR = {2006},
    NUMBER = {10},
     PAGES = {751--754},
      ISSN = {1631-073X},
   MRCLASS = {14F42},
  MRNUMBER = {2227753},
       DOI = {10.1016/j.crma.2006.03.013},
       URL = {http://dx.doi.org/10.1016/j.crma.2006.03.013},
}
\bib{ro}{article}{
    AUTHOR = {R\"ondigs, Oliver}
    AUTHOR = {\O stv\ae r, Paul Arne},
     TITLE = {Modules over motivic cohomology},
   JOURNAL = {Adv. Math.},
  FJOURNAL = {Advances in Mathematics},
    VOLUME = {219},
      YEAR = {2008},
    NUMBER = {2},
     PAGES = {689--727},
      ISSN = {0001-8708},
   MRCLASS = {14F42 (55U35)},
  MRNUMBER = {2435654},
MRREVIEWER = {Christian Haesemeyer},
       DOI = {10.1016/j.aim.2008.05.013},
       URL = {http://dx.doi.org/10.1016/j.aim.2008.05.013},
}

\bib{schwede-shipley}{article}{
    AUTHOR = {Schwede, Stefan}
    AUTHOR = {Shipley, Brooke},
     TITLE = {Stable model categories are categories of modules},
   JOURNAL = {Topology},
  FJOURNAL = {Topology. An International Journal of Mathematics},
    VOLUME = {42},
      YEAR = {2003},
    NUMBER = {1},
     PAGES = {103--153},
      ISSN = {0040-9383},
   MRCLASS = {55U35 (18G55 55P42 55P43)},
  MRNUMBER = {1928647},
MRREVIEWER = {Mark Hovey},
       URL = {https://doi.org/10.1016/S0040-9383(02)00006-X},
}

\bib{voevodsky-trimot}{article}{
AUTHOR = {Voevodsky, V.},
     TITLE = {Triangulated categories of motives over a field},
 BOOKTITLE = {Cycles, transfers, and motivic homology theories},
    SERIES = {Ann. of Math. Stud.},
    VOLUME = {143},
     PAGES = {188--238},
 PUBLISHER = {Princeton Univ. Press, Princeton, NJ},
      YEAR = {2000},
   MRCLASS = {14F42 (14C25)},
  MRNUMBER = {1764202},
}
%

\bib{voev-cancel}{article}{
AUTHOR = {Voevodsky, V.},
     TITLE = {Cancellation theorem},
   JOURNAL = {Doc. Math.},
  FJOURNAL = {Documenta Mathematica},
      YEAR = {2010},
    NUMBER = {Extra vol.: Andrei A. Suslin sixtieth birthday},
     PAGES = {671--685},
      ISSN = {1431-0635},
   MRCLASS = {14F42 (19E15)},
  MRNUMBER = {2804268},
MRREVIEWER = {Oliver R\~A\P ndigs},
}


\bib{susvoe}{book}{
    AUTHOR = {Voevodsky, Vladimir},
    AUTHOR = {Suslin, Andrei},
    AUTHOR = {Friedlander, Eric},
     TITLE = {Cycles, transfers, and motivic homology theories},
    SERIES = {Annals of Mathematics Studies},
    VOLUME = {143},
 PUBLISHER = {Princeton University Press, Princeton, NJ},
      YEAR = {2000},
     PAGES = {vi+254},
      ISBN = {0-691-04814-2; 0-691-04815-0},
   MRCLASS = {14F42 (14C25 19E15)},
  MRNUMBER = {1764197},
MRREVIEWER = {Spencer J. Bloch},
}

\end{biblist}
\end{bibdiv}

\end{document}